\definecolor{labelkey}{rgb}{0,0,1}
\definecolor{Red}{rgb}{0.7,0,0.1}
\definecolor{Green}{rgb}{0,0.7,0}
\numberwithin{equation}{section}
\newtheorem{Thm}{Theorem}[section]
\newtheorem{Lem}[Thm]{Lemma}
\newtheorem{Prop}[Thm]{Proposition}
\newtheorem{Cor}[Thm]{Corollary}
\newtheorem{Rmk}[Thm]{Remark}
\newtheorem*{Thm*}{Theorem}
\newcommand{\ZZ}{\mathbb{Z}}
\newcommand{\cO}{\mathcal{O}}
\newcommand{\no}[2]{\lVert#2\rVert_{#1}}
\newcommand{\Abs}[2]{\lvert#2\rvert_{#1}}
\newcommand{\goesto}{\rightarrow}
\newcommand{\smod}{\setminus}
\newcommand{\de}{\delta}
\newcommand{\De}{\Delta}
\newcommand{\eps}{\epsilon}
\newcommand{\veps}{\varepsilon}
\newcommand{\si}{\sigma}
\newcommand{\kap}{\kappa}
\newcommand{\Om}{\Omega}
\newcommand{\bdy}{\partial}
\newcommand{\lp}{\left(}
\newcommand{\rp}{\right)}
\newcommand{\lpp}{(\!(}
\newcommand{\rpp}{)\!)}
\newcommand{\tu}{\tilde{u}}
\newcommand{\tw}{\tilde{w}}
\newcommand{\tv}{\tilde{v}}
\newcommand{\tq}{\tilde{q}}
\newcommand{\tp}{\tilde{p}}
\newcommand{\tC}{\tilde{C}}
\newcommand{\Gr}{\mathfrak{G}}
\newcommand{\sS}{\mathscr{S}}
\newcommand{\tsS}{\tilde{\mathscr{S}}}
\newcommand{\Cp}{\mathfrak{p}}
\newcommand{\Cq}{\mathfrak{q}}
 \title[Infinite Error Feedback Gain]{
 %On the Infinite-Nudging Limit of the Nudging Filter for Continuous Data Assimilation
 On the nudging approach to continuous data assimilation in the limit of infinite error feedback gain
 }
 \author{Elizabeth Carlson, Aseel Farhat, Vincent R. Martinez, Collin Victor}
\begin{document}

\begin{abstract}
This article studies the intimate relationship between two filtering algorithms for continuous data assimilation, the {direct-replacement algorithm} and the {nudging algorithm}, in the paradigmatic context of the two-dimensional (2D) Navier-Stokes equations (NSE) for incompressible fluids. In this setting, the nudging algorihm can formally be viewed as an affine perturbation of the 2D NSE. Thus, in the degenerate limit of zero nudging parameter, the {nudging algorithm} correspondingly converges to the solution of the 2D NSE. However, when the nudging parameter of the {nudging algorithm} is made increasingly large, the perturbation apparently becomes more singular. It is nevertheless shown here that in the limit of infinite nudging parameter, the nudging filter converges to the {direct-replacement algorithm}. In establishing this result, the article fills a notable gap in the literature surrounding these algorithms. Numerical experiments are then presented that confirm the theoretical results and probes the issue of selecting a nudging strategy in the presence of observational noise. In this direction, an adaptive nudging strategy is proposed that leverages the insight gained from the relationship between the {direct-replacement algorithm} and the {nudging algorithm} that produces measurable improvement over the constant nudging strategy.
\end{abstract}

\maketitle

{\noindent \small {\it {\bf Keywords: continuous data assimilation, {nudging algorithm}, {direct-replacement algorithm}, Navier-Stokes equations, infinite error feedback gain, adaptive nudging}
  } \\
{\it {\bf MSC 2010 Classifications:} 35Q30, 35B30, 37L15, 76B75, 76D05, 93B52} 
}

%\tableofcontents
%\setcounter{tocdepth}{2}

\section{Introduction}\label{sect:intro}

Data assimilation (DA) was born in the 1960s, when it was proposed by J. Charney, M. Halem, and R. Jastrow \cite{CharneyHalemJastrow1969} that the equations of motion of the atmosphere be used to process observations collected on the evolving state of the atmosphere for the purpose of improving their prognostic capabilities. Preceding \cite{CharneyHalemJastrow1969}, it was proposed in a milestone paper of V. Bjerknes \cite{Bjerknes1904} that the problem of weather prediction be reduced to the direct numerical simulation of the equations of motion and the obtaining of a sufficiently accurate approximation of the state of the atmosphere with which to initialize the equations. It was in the advent of scientific computing in the 1950s \cite{CharneyFjortoftVonNeumann1950, Charney1951} and launch of the first weather satellites in the 1960s \cite{KalnayBook} that DA was conceived in the spirit of the mechanistic perspective to meteorology of Bjerknes. Although numerical weather prediction continues to be an important application of DA, DA methods have since become essential in any situation for which both a model and observations on the modeled phenomenon are available. 

Two fundamental issues in the study of DA derive from the nonlinear and often high-dimensional nature of the models of interest, as well as the presence of errors in both the model and observations. Since the seminal work of R.E. Kalman and R.S. Bucy \cite{Kalman_Bucy_1961}, it has been known that when the system is linear and errors are Gaussian, the optimal predictive process is also linear and Gaussian. Thus, any situation in which the underlying system is nonlinear must naturally contend with non-Gaussianity, leading to issues in sampling and efficient computation. Perhaps more importantly, the results in \cite{Kalman_Bucy_1961} indicated that new methods were required to account for nonlinearity appropriately. Many important efforts have subsequently been dedicated to addressing these issues; the reader is referred to the following seminal works \cite{Lorenc1986, Talgrand_LeDimet_1986, GordonSalmondSmith1993, Evensen_1997, BurgersvanLeeuwenEvensen1998} and the review articles \cite{Kunsch_2013, Carrassi_et_al_2018, Pandya_et_al_2022}. On the other hand, in many applications, the model of interest is often given by a nonlinear system of partial differential equations (PDEs). Thus, a deeper understanding of DA methods is inevitably rooted in the understanding of how the incorporation of observations interact with nonlinearity in PDEs; this article is a contribution to the latter endeavor.

In particular, the primary concern of this paper is to establish a rigorous theoretical relationship between two filtering algorithms for CDA, the \textit{{direct-replacement algorithm}} and the \textit{{nudging algorithm}}. The study of these algorithms is carried out under the mathematically ideal assumptions of the availability of a perfect model, given by a dissipative partial differential equation (PDE), and noise-free observations, so that the fundamental issues of nonlinearity and high-dimensionality are isolated from the issues involved with the presence of noise. Naturally, the important issue of the effect of observational or model errors must be addressed in a subsequent study. This work may nevertheless be considered as a foundational step in bridging our understanding of these two algorithms. In this direction, the reader is also referred to a recent work of the authors \cite{CarlsonFarhatMartinezVictor2025a, CarlsonFarhatMartinezVictor2025b} in which the precise relationship between the determining modes property, {direct-replacement algorithm}, and {nudging algorithm} is established through the notion of \textit{intertwinement}.

The particular model we consider is the Navier-Stokes equations (NSE) for incompressible fluids in two-dimensions (2D), which has been used as a paradigmatic example for CDA studies \cite{ApteJonesStuartVoss2008, Sanz-AlonsoStuart2015} due to its connection as a model for turbulent fluid flow, a phenomenon that exhibits a large number of degrees of freedom and chaotic dynamical behavior \cite{FoiasTemam1987, FoiasManleyRosaTemamBook2001}. %The important issue of the effect of observational errors is not addressed here as this work constitutes a first step in establishing unifying one's understanding of these two filters.
We will specifically consider the 2D NSE over a rectangular spatial domain, $\Om=[0,2\pi]^2$, equipped with periodic boundary conditions for analytic convenience:
    \begin{align}\label{eq:nse}
        \bdy_tu+u\cdotp\nabla u=-\nabla p+\nu\De u+f,\quad \nabla\cdotp u=0.
    \end{align}
where $u=(u^1,u^2)$ denotes the velocity vector field, $\nu$ the kinematic viscosity, $p$ the scalar pressure field, and $f$ a given external force {which can be thought of a source of energy that sustains} turbulent {or chaotic} behavior of the modeled fluid. We denote solutions to the corresponding initial value problem of \eqref{eq:nse} by $u(\cdotp;u_0)$, where $u(0;u_0)=u_0$. In the analysis of \eqref{eq:nse}, it is customary to apply the Leray projection onto divergence-free vector fields to \eqref{eq:nse} and subsequently consider the equivalent functional formulation of \eqref{eq:nse} given as
    \begin{align}\label{eq:nse:ff}
        \frac{du}{dt}+\nu Au+B(u,u)=Pf,
    \end{align}
where $P$ denotes the Leray projection onto divergence-free vector fields, $A=-P\De$ is the Stokes operator, and $B$ is the bilinear form defined by
    \begin{align}\label{def:B}
        B(u,v):=P(u\cdotp\nabla)v.
    \end{align}
If we assume that $f$ is divergence-free, then $Pf=f$; this will be a standing assumption henceforth. 

As previously mentioned, \eqref{eq:nse} is assumed to be our representation of reality. Under this assumption, the observations collected on the underlying reality are formulated as a continuous time-series
    \begin{align}\label{def:obs}
        \cO_N(u_0)=\{P_Nu(t;u_0)\}_{t\geq0},
    \end{align}
where $N\geq0$ is a real number and $P_N$ denotes projection onto Fourier wavenumbers $|k|\leq N$. In particular, the exact values of $(I-P_N)u(t)$ are \textit{unknown} for all $t\geq0$. We will denote the complementary projection by
    \begin{align}\label{def:QN}
        Q_N:=I-P_N.
    \end{align}  
We will also make use of the shorthand notations $\cO$ in place of $\cO_N(u_0)$ and ${{}Q}$ in place of $Q_Nu$, particularly when the context makes clear the dependence on $u_0$ and $N$.

The {direct-replacement algorithm} is defined by directly inserting the observations into the system, then integrating the subsequent equation forward-in-time to obtain an approximation of the unobserved state variables.  This is effectively the manner in which the observations are to be processed by the equation that was proposed in \cite{CharneyHalemJastrow1969}. {It may be seen as a particular approach to synchronization of dynamical systems, as well as a type of ``reduced-order observer" in which only the unobserved variables are modeled dynamically; this latter notion was introduced by D.G. Luenberger in \cite{Luenberger1964}. To precisely define the {direct-replacement algorithm}, let}
    \begin{align}\label{def:p}
        p:=P_Nu(\cdotp;u_0),
    \end{align}
where $u(\cdotp;u_0)$ satisfies \eqref{eq:nse:ff}. Then the \textit{{direct-replacement algorithm}} is defined as 
    \begin{align}\label{def:sync}
        v:=p+q,\quad q=\sS(p;q_0),
    \end{align}
where $\sS$ denotes the solution operator to the following initial value problem:
    \begin{align}\label{eq:sync}
        \frac{dq}{dt}+\nu Aq+Q_NB(p+q,p+q)=Q_Nf,\quad q(0)=q_0.
    \end{align}
We will also make use of the expanded notation $v=v(\cdotp;v_0, \cO_N(u_0))$ for \eqref{def:sync}, where it is implicitly assumed that $v_0=P_Nu_0+q_0$. It is important to note that $q_0$ is not necessarily equal to ${{}{{}Qu_0}=Q_Nu_0}$. Indeed, when ${{}q_0=Qu_0}$, then $v=u$, or equivalently that
    \begin{align}\notag
        v=p+\sS(p;{{}{{}Qu_0}})=u(\cdotp;u_0).    
    \end{align}
{{}The direct-replacement algorithm was originally}  studied by E. Olson and E.S. Titi \cite{OlsonTiti2003} in the context of the 2D NSE, under the same mathematically ideal assumptions described above. In \cite{OlsonTiti2003}, it was shown that the algorithm successfully reconstructs the unobserved state, namely
    \begin{align}\label{eq:OT:result}
        \lim\limits_{t\goesto\infty}\|v(t;v_0,\cO_N(u_0))-u(t;u_0)\|_{L^2}=0, 
    \end{align}
provided that $N=N(\nu,f)$ is sufficiently large. In other words, the {direct-replacement algorithm} successfully reconstructs the unobserved state variables asymptotically in time provided that sufficiently many {length scales of the flow} are observed for all time. 

This algorithm, applied to the 2D NSE, was studied numerically in \cite{OlsonTiti2008}, in its discrete-in-time formulation by \cite{HaydenOlsonTiti2011}, and later in the presence of unbounded observational noise by \cite{OljacaBrockerKuna2018}; a related work that preceded \cite{OljacaBrockerKuna2018} is \cite{BrettLamLawMcCormickScottStuart2012}, which considers the case of bounded observational noise. Other works that improve upon the algorithm in different directions include \cite{CelikOlsonTiti2019}, which expand the algorithm to include non-spectral observations, such as volume element or nodal value observations, and \cite{CelikOlson2023}, where a mechanism to filter observational noise is incorporated. It is notable that \cite{OlsonTiti2003} is one of the first works to study CDA filters for nonlinear partial differential equations via rigorous mathematical analysis. One of the key insights from \cite{OlsonTiti2003} is that the success of CDA in the context of the 2D NSE and related equations is the presence of a nonlinear mechanism for asymptotically enslaving small scales to large scales. This mechanism was originally discovered in the context of the 2D NSE by C. Foias and G. Prodi in \cite{FoiasProdi1967} as the property of having finitely many \textit{determining modes}. Subsequent works found several different forms of this property \cite{FoiasTemam1984, CockburnJonesTiti1997, JonesTiti1992a, JonesTiti1992b}, which in turn formed the mathematical justification of many studies in CDA. Among these is the study of another elemental CDA filter known as the {nudging algorithm}.

The {nudging algorithm} is defined by inserting the observed state exogenously into the system of interest as a feedback control term that serves to guide the state towards that of the observations, but only the subspace where observations are available. The approximating state of the system is then produced by integrating the controlled equation forward-in-time. In our setting, the \textit{{nudging algorithm}} can be defined more precisely as
    \begin{align}\label{def:nudge}
        \tv:={\tsS(p;\tv_0)}
        %\tp+\tsS(\tp;p),\quad \tq=\tsS(\tp;p),
    \end{align}
%where 
%	\[
%		\tp:=P_N\tv,\quad \tq:=Q_N\tv,
%	\]
where {$\tsS$ denotes the solution operator to the following initial value problem:}
    \begin{align}\label{eq:nudge}
        \frac{d\tv}{dt}+\nu A\tv+B(\tv,\tv)=f-\mu {\tp}+\mu {p},\quad \tv(0)=\tv_0,
    \end{align}
where {$\tp:=P_N\tv$ and $p$ is given by \eqref{def:p}}. %$u=u(\cdotp;u_0)$ satisfies \eqref{eq:nse:ff}. 
We {will also write} the solution to \eqref{eq:nudge} as $\tv=\tv(\cdotp;\tv_0,\cO_N(u_0))$ {in order to exhibit the dependence of $\tv$ on its initial data and the available observations}. Note that in general ${P_N\tv_0}$ need not equal to ${P_Nu_0}$, but if $\tv_0=u_0$, then $\tv=u$. We will refer to \eqref{def:nudge}, \eqref{eq:nudge} as the \textit{{nudging algorithm}}. It was shown by A. Azouani, E. Olson, and E.S. Titi \cite{AzouaniOlsonTiti2014} that the {nudging algorithm} successfully reconstructs the unobserved state variables asymptotically in time in the sense that
    \begin{align}\label{eq:AOT:result}
        \lim\limits_{t\goesto\infty}\|\tv(t;\tv_0,\cO_N(u_0))-u(t;u_0)\|_{L^2}=0,
    \end{align}
provided that sufficiently many {length-scales of the flow} are observed for all time and that the nudging parameter is accordingly tuned. 

{The use of nudging for the purpose of numerical weather prediction was first proposed} by J.E. Hoke and R.A. Anthes \cite{HokeAnthes1976}, although their study was restricted to the setting of finite-dimensional systems of ordinary differential equations. Many studies on nudging and synchronization-based techniques for data assimilation have since followed these classical works, but mostly in the setting of nonlinear systems of ODEs \cite{ZouNavonLedimet1992, AurouxBlum2008, PazoCarrassiLopez2016, PinheirovanLeeuwenGeppert2019}. However, in the seminal work \cite{DuaneTribbiaWeiss2006}, it was recognized that the ability of nonlinear systems to intrinsically synchronize \cite{PecoraCarroll1990} could be facilitated through nudging and therefore leveraged for the purposes of DA even in PDEs. The work \cite{AzouaniOlsonTiti2014} was one of the first to study the {nudging algorithm} in the context of partial differential equations in a mathematically rigorous fashion. One of the main achievements of \cite{AzouaniOlsonTiti2014} was to properly recognize the flexibility of the feedback control term to accommodate a large class of observation-types, {particularly} other than spectral observations. Indeed, it is shown in \cite{AzouaniOlsonTiti2014} that \eqref{eq:AOT:result} still holds if $P_N$ is replaced with a linear operator $I_h$ satisfying suitable approximation properties.

Numerical experiments analogous to those carried out in \cite{OlsonTiti2008} for the {direct-replacement algorithm} were carried out in \cite{GeshoOlsonTiti2016} for the {nudging algorithm}, and explored further in several subsequent works \cite{AltafTitiGebraelKnioZhaoMcCabeHoteit2017, FarhatJohnstonJollyTiti2018, DesamsettiDasariLangodanTitiKnioHoteit2019, ClarkDiLeoniMazzinoBiferale2020, BuzzicottiClarkDiLeoni2020}. In the presence of observational noise, studies were carried out by D. Bl\"omker, K. Law, A. Stuart, and K. Zygalakis \cite{BlomkerLawStuartZygalakis2013}, but only in the context of spectral observations, and H. Bessaih, E. Olson, and E.S. Titi \cite{BessaihOlsonTiti2015} in the more general framework of \cite{AzouaniOlsonTiti2014}. Notably, in the presence of observational noise, the {nudging algorithm} can be viewed as a suboptimal estimation of the mean of the state, in contrast to the 3DVAR filter, which provides updates in an optimal way {as an approximate Gaussian filter} (see, for instance, \cite[Equation 16]{BlomkerLawStuartZygalakis2013} in contrast with \cite[Equation 21]{BessaihOlsonTiti2015}), thus giving a logical primacy to the study of the {nudging algorithm}, as it forms the analytical core of the more sophisticated optimized setup. Because of this, the {nudging algorithm} has enjoyed a wealth of activity since \cite{AzouaniOlsonTiti2014}. It has been used as framework to give mathematical justification to typical practices in DA and its in many hydrodynamic or geophysical scenarios  \cite{FarhatJollyTiti2015, FarhatLunasinTiti2016a, FarhatLunasinTiti2016b, FarhatLunasinTiti2016c, AlbanezLopesTiti2016,   FoiasMondainiTiti2016, BiswasMartinez2017, JollyMartinezTiti2017,  AlbanezBenvenutti2018, BiswasFoiasMondainiTiti2018, BlocherMartinezOlson2018, JollyMartinezOlsonTiti2019, FarhatGlattHoltzMartinezMcQuarrieWhitehead2020, BiswasBradshawJolly2021, Franz_Larios_Victor_2021, CaoGiorginiJollyPakzad2022, You2024, Biswas_Branicki_2024}. It has also found application to improving numerical approximation \cite{MondainiTiti2018, IbdahMondainiTiti2019, ZerfasRebholzSchneierIliescu2019, LariosRebholzZerfas2019, HammoudTitiHoteitKnio2022, JollyPakzad2023, Garcia-ArchillaLiNovoRebholz2024}, inverse problems \cite{DiLeoniClarkMazzinoBiferale2018,CarlsonHudsonLarios2020, CarlsonHudsonLariosMartinezNgWhitehead2021, PachevWhiteheadMcQuarrie2022, Martinez2022, BiswasHudson2023, Martinez2024, FarhatLariosMartinezWhitehead2024, AlbanezBenvenutti2024}, and the study of long-time dynamics of various nonlinear PDEs \cite{FoiasJollyKravchenkoTiti2012, FoiasJollyKravchenkoTiti2014, JollySadigovTiti2015, JollySadigovTiti2017, FoiasJollyLithioTiti2017, JollyMartinezSadigovTiti2018}.

In spite of these many recent developments, the exact relationship between the {direct-replacement algorithm}, {nudging algorithm}, and underlying dynamical equation has remained a folklore result in the DA community. This relationship is rigorously addressed in the present article by considering the singular infinite-nudging limit ($\mu\goesto\infty$) in the {nudging algorithm} within the paradigmatic setting of the 2D NSE. In particular, the following convergence result is established: Let $H$ denote the subspace of square-integrable, divergence-free vector fields over $\Om$, which are mean-free and $2\pi$-periodic in each direction, and let $V$ denote the subspace of $H$ endowed with the topology of $H^1$. Then

\begin{Thm}\label{thm:main}
Given $f\in L^\infty(0,\infty;H)$ and $u_0\in V$, let $u$ denote the unique solution to the initial value problem corresponding to \eqref{eq:nse:ff}. {{}Suppose that the nudging and direct-replacement algorithms are initialized identically.} For any {$N>0$} {and $T>0$, one has}
       \begin{align}\label{eq:main:claim2}
        \lim\limits_{\mu\goesto\infty}\sup\limits_{t\in[0,T]}\|\tv(t;v_0,\cO_N(u_0))-v(t;v_0,\cO_N(u_0))\|_{L^2}=0,
        \end{align}
{for all $v_0\in V$ {{}such that $P_Nv_0=P_Nu_0$}.
%whereas if $Q_N\tv_0=Q_Nv_0$, but $P_N\tv_0\neq P_Nv_0$, then for any $\tv_0\in V$
%     \begin{align}\label{eq:main:claim2'}
        %\lim\limits_{t_0\goesto0^+}\limsup\limits_{\mu\goesto\infty}\sup\limits_{t\in[{t_0},T]}\|\tv(t;{\tv_0},\cO_N(u_0))-v(t;v_0,\cO_N(u_0))\|_{L^2}=0.
        %\end{align}
Moreover, there exists $N_*$ such that for all $N\geq N_*$
    \begin{align}\label{eq:main:claim3}
        \lim\limits_{\mu\goesto\infty}\sup\limits_{t\geq0}\|\tv(t;v_0,\cO_N(u_0))-v(t;v_0,\cO_N(u_0))\|_{L^2}=0.
    \end{align}
%and, if $Q_N\tv_0=Q_Nv_0$, but $P_N\tv_0\neq P_Nv_0$, then
%    \begin{align}\label{eq:main:claim3'}
        %\lim\limits_{t_0\goesto0^+}\limsup\limits_{\mu\goesto\infty}\sup\limits_{t\geq t_0}|\tv(t;\tv_0,\cO_N(u_0))-v(t;v_0,\cO_N(u_0))|=0.
    %\end{align}
}
\end{Thm}

A simple heuristic that quickly reveals the relationship between the {nudging algorithm} and {direct-replacement algorithm} is to simply divide by $\mu$ in \eqref{eq:nudge} and then pass to the limit as $\mu\goesto\infty$:
    \begin{align}\notag
        -P_N\tv+P_Nu=\frac{1}{\mu}\left(\frac{d\tv}{dt}+\nu A\tv+B(\tv,\tv)-f\right)\goesto0.
    \end{align}
Thus, $P_N\tv=P_Nu$ is enforced in the infinite-nudging regime. The issue with this formal argument is that without additional assumptions on $N$, the {a priori} analysis of $\tv$ produces bounds that \textit{depend linearly} on $\mu$ {(see \eqref{est:v:apriori} in \cref{rmk:main})}. {Moreover, the above heuristic, while it implicitly suggests that the high-modes of $\tv$ in the infinite-nudging limit would evolve according to \eqref{eq:sync}, it does not indicate what happens if \eqref{eq:nudge} and \eqref{eq:sync} are initialized differently. On the other hand, it was shown in \cite{OlsonTiti2003} that for $N$ sufficiently large, $v\goesto u$ as $t\goesto\infty$, while in \cite{AzouaniOlsonTiti2014}, for $N$ sufficiently large and $\mu$ sufficiently large, \textit{but bounded suitably by $N$}, then $\tv\goesto u$ as $t\goesto\infty$. Within this regime of parameters, $\tv$ therefore converges to $v$ as $t\goesto\infty$, since they both asymptotically converge to $u$. However, this suggests that even if one allows for $N$ to be taken sufficiently large, that one is unable to pass to the infinite nudging limit without also passing to the infinite $N$ limit. The main challenge is therefore to {{}develop an analysis} that allows one to overcome the {{}two} following \textit{apparent} obstructions in passing to the {{}infinite nudging limit}: 1) {{}the apriori bounds $|\tv|^2 \leq O(\mu)$}, and  2) {{}the relation that $\lim_{\mu\goesto\infty}N=\infty$ implied by the joint condition imposed in \cite{AzouaniOlsonTiti2014} to ensure asymptotic convergence of the nudging algorithm. We furthermore address} the case of \textit{incompatible} initial data, when the low-modes of the nudging and direct-replacement algorithm are not equal, but the high-modes are equal; the challenge presented here is to establish suitable control on the transient errors induced by the initial incompatibility (see \eqref{est:nudge:infinity:positive} of \cref{thm:nudge:limit} and \eqref{eq:nudge:complete:positive} of \cref{cor:nudge:complete}). %and 3) potential incompatibility of initial conditions between the nudging and direct-replacement algorithms.

In \cref{sect:limit}, we develop such {{}an analysis} and prove \cref{thm:main} by recognizing that {{}low-mode errors are in fact driven to zero as the nudging parameter becomes unboundedly large} on \textit{any finite-time horizon}. However, since the underlying system is nonlinear, one must then negotiate the coupling between high-modes and low-modes. This issue is overcome by establishing a Lipschitz relationship between the mapping from low-modes to high-modes in a suitable trajectory space. This Lipschitz property is ultimately afforded by {{}the freedom to lose as many derivatives as needed} on low-mode projections, a maneuver which is counter-intuitive to all previous analyses surrounding the nudging algorithm. {{}Indeed, doing so would naturally force} $\mu$ to be large relative to $N$. {{}On the other hand, in light of the joint condition identified by \cite{AzouaniOlsonTiti2014} which has $\lim_{\mu\goesto\infty}N=\infty$, one would subsequently be lead to expect} that $N$ {{}should} also be taken large {{}in order} to ensure asymptotic convergence of $\tv$ to $u$. The crucial observation made here is that the usual condition that forces $\mu$ to be bounded by $N$ \textit{can be relaxed} by sacrificing the faster rate of asymptotic convergence {{}provided by $\mu$ for the typically slower rate determined by $\nu$}. In doing so, we provide three important clarifications to the analysis of the nudging and direct-replacement algorithms: 1) on arbitrarily long finite-time horizons, the nudging parameter can be taken arbitrarily large, and allowed to depend on \textit{both} the size of the time-window and the {{}spatial} observational density,  2) one need not take $N$ sufficiently large relative to $\mu$ to ensure asymptotic convergence of $\tv$ to $u$, and 3) initial incompatibilities can be compensated for in arbitrarily short time simply by continuity. In this way, our paper not only establishes a basic and fundamental connection between the nudging and direct-replacement algorithms for CDA, but it gives clarity to the precise interrelation between the time-windows of convergence, the observational density, and the nudging parameter.
}
%We show that such a relationship is possible by making a trivial, but nevertheless crucial observation that the low-mode projection of any function is smooth, 

%and provide a proof of \cref{thm:main}; it relies crucially on the interplay between the {large-scale} stabilizing mechanism of the observations in the {nudging algorithm} and the continuity properties of the {low-mode to high-mode mapping that induces the} {direct-replacement algorithm}. {The latter property is a genuinely nonlinear property since it allows one to control discrepancies in the high-mode evolution by discrepancies in the low-mode evolution on arbitrary finite-time horizons. It is important to note that such a property fails to hold for the linear evolutionary Stokes equation wherein each mode evolves independently of one another. In this way, the validity of \cref{thm:main} is facilitated by the presence of nonlinearity.}
%is related to the so-called squeezing property of the 2D NSE (see, for instance, \cite{RobinsonBook, Temam_1997_IDDS}), a genuinely nonlinear property of the system. 

In contrast to the infinite-nudging limit, the complementary limit of zero-nudging parameter is degenerate in the sense that the {nudging algorithm} collapses back to a solution of the 2D NSE initialized with the same initial value of the {nudging algorithm}. Namely, one has
    \begin{align}\label{eq:main:claim1}
        \lim\limits_{\mu\goesto0}\sup\limits_{t\in[0,T]}\|\tv(t;\tv_0,\cO_N(u_0))-u(t;\tv_0)\|_{L^2}=0,
    \end{align}
for any $\tv_0\in V$ {and any $T>0$}. In this regime, all information from the observations $\cO_N(u_0)$ is {constrained to the initial data}. In effect, the zero-nudging limit collapses to what one might call the ``Bjerknes algorithm," which simply integrates \eqref{eq:nse} forward-in-time with whatever initial condition one managed to generate offline. For the sake of completeness, a proof of \eqref{eq:main:claim1} is provided in \cref{sect:limit:zero}. {In doing so, we {{}rigorously locate} the nudging algorithm as an intermediary between the the Bjerknes algorithm and direct-replacement algorithm, {{}and ultimately establish a deeper connection between the seminal results of \cite{OlsonTiti2003} and \cite{AzouaniOlsonTiti2014}.}

The paper concludes with \cref{sect:numerical} where we present the results of a variety of systematic numerical experiments that confirm the infinite-nudging limit, as well as the zero-nudging limit. The results of these experiments naturally lead one to consider intermediate possibilities between these two limiting regimes by allowing $\mu$ to be state-dependent, but constrained to the information available from the observations. Upon inspecting how the error dynamics transition from one regime to the next, we identify a simple adaptive scheme that measurably improves upon the constant-$\mu$ strategy in light of the analytical results  obtained in \cite{BessaihOlsonTiti2015} {{}in the presence of noisy observations}.

%{Include zero nudging limit}

%{Comment on Bjerknes}

%{Comment on Leo's result}

%{Intertwinement result}

%{Discussion on numerical results}
%; mention Layton result (MORE DETAILED REMARK)}

\section{Mathematical Preliminaries}\label{sect:notation}

%We let $H$ denote the space of $L^2$ real-valued vector fields, which are $2\pi$-periodic in each direction, divergence-free, and mean-free over $\Om$, in the sense of distribution. We let $\PP$ denote the Leray projection. Observe that $\PP H=H$. We let $V$ denote the subspace of $H$ endowed with the $V$ topology. 
We denote the inner products and norms on $H$ and $V$, respectively, by
    \begin{align}\label{def:H}
        \lp u, v\rp=\int_\Om u(x)\cdotp v(x)dx,\quad |u|^2=\lp u,u\rp,
    \end{align}
and
    \begin{align}\label{def:V}
        {\lpp u,v\rpp}=\sum_{j=1,2}\int_\Om \bdy_ju(x)\cdotp\bdy_jv(x)\ dx,\quad \lVert u\rVert^2={\lpp u, u\rpp}.
    \end{align}
Recall the Poincar\'e inequality, which implies the continuous embedding $V\subset H$:
    \begin{align}\label{est:Poincare}
        |u|\leq \lVert u\rVert.
    \end{align}
\begin{comment}
The dual spaces of $H, V$ will be denoted by $H^*, V^*$ respectively. Then we have the following continuous imbeddings
    \begin{align}\notag
        V\subset H\subset H^*\subset V^*.
    \end{align}
\end{comment}

For each $1\leq p\leq\infty$, we will also make use of the Lebesgue spaces, $L^p(\Om)$, which denote the space of $p$-integrable functions endowed with the following norm:
    \begin{align}\label{def:Lp}
        \Abs{p}{u}=\lp\int_\Om|u(x)|^p dx\rp^{1/p},
    \end{align}
with the usual modification when $p=\infty$. For convenience, we will view them as subspaces of absolutely integrable functions over $\Om$, which are mean-free and $2\pi$-periodic in each direction a.e. in $\Om$. It will be convenient to abuse notation and consider $L^p$ as a space of either scalar functions or vector fields. %In this way, we have $L^p\subset H\subset L^2$, for $p\geq2$.

\begin{comment}
%e denote the Stokes operator by $A=-\PP\De$ and define, 
We define for each integer $n\geq0$:
    \begin{align}\label{def:A}
        A^{n/2}u=\sum_{k\in\ZZ^2\smod\{(0,0)\}}\hat{u}_kw_k,\quad w_k(x)=\cos(k\cdotp x).
    \end{align}
The domain, $D(A^{n/2})$, of $A^{n/2}$ is a subspace of $H$ endowed with the topology induced by
    \begin{align}\label{def:Hn}
        \no{n}{u}=|A^{n/2}u|=\lp(2\pi)^2\sum_{k\in\ZZ^2}|k|^{2n}|\hat{u}_k|^2\rp^{1/2}.
    \end{align}
Observe that
    \begin{align}\notag
        |u|=\Abs{0}{u},\qquad 
        \rVert u\rVert=\no{0}{u}=\Abs{0}{A^{1/2}u}.
    \end{align}
\end{comment}

Our analysis will make use of the Ladyzhenska and Agmon interpolation inequalities: there exist absolute constants $C_L, C_A>0$ such that
	\begin{align}\label{est:interpolation}
		\Abs{4}{u}^2\leq C_L\lVert u\rVert|u|,\qquad
		\Abs{\infty}{u}^2\leq C_A|Au||u|.
	\end{align}
Another useful interpolation inequality is the following:
	\begin{align}\label{est:interpolation:CS}
		\lVert u\rVert^2\leq |Au||u|
	\end{align}
We will also make use of the Bernstein inequality: for any integers $m\leq n$
	\begin{align}\label{est:Bernstein}
		\Abs{}{A^{n/2}P_Nu}\leq N^{n-m}\Abs{}{A^{m/2}P_Nu},\quad \Abs{}{A^{m/2}Q_Nu}\leq N^{m-n}\Abs{}{A^{n/2}Q_Nu},
	\end{align}
where $A^{n/2}$ denotes powers of the Stokes operator, which is defined as
    \begin{align}\label{def:A}
        A^{n/2}u=\sum_{k\in\ZZ^2\smod\{(0,0)\}}|k|^n\hat{u}_kw_k,\quad w_k(x)=\exp(ik\cdotp x).
    \end{align}
\begin{comment}
Observe that we also have the following borderline Sobolev inequality
    \begin{align}\label{est:Sobolev}
        |P_Nu|_\infty\leq C_S(\ln N)^{1/2}\|P_Nu\|
    \end{align}
\end{comment}

Given $f\in L^\infty(0,\infty;H)$, the \textit{generalized Grashof number} is defined as
    \begin{align}\label{def:Grashof}
        \Gr:=\frac{\sup\limits_{t\geq0}|f(t)|}{\nu^2}.
    \end{align}
and its shape factor by
    \begin{align}\label{def:shape}
        \si_{-1}:=\frac{\sup\limits_{t\geq0}\|f(t)\|_*}{\sup\limits_{t\geq0}|f(t)|},
    \end{align}
where $\|\cdot\|_*$ denotes the norm on the space $V^*$ dual to $V$.

Upon recalling \eqref{eq:nse:ff} and \eqref{def:B}, we recall  the well-known, skew-symmetric property of the trilinear form $\lp B(u,v),w\rp$:
	\begin{align}\label{eq:B:skew}
		\lp B(u,v),w\rp=-\lp B(u,w),v\rp,
	\end{align}
for $u,v,w\in V$, which immediately implies
    \[
        \lp B(u,v),v\rp=0.    
    \]
%\begin{comment}
{We will also make use of the identity}
    \begin{align}\label{eq:B:enstrophy}
       {\lp B(u,u),Au\rp=0}, %{\lp B(v,v),Au\rp+\lp B(u,v),Av\rp+\lp B(v,u),Av\rp=0,}
    \end{align}
{which holds for $u\in V$, $v\in D(A)$ in the setting of mean-free, periodic functions over $[0,2\pi]^2$.}
%This immediately implies}
    %\[
    %    {\lp B(u,u),Au\rp=0.}
    %\]
%\end{comment}
Observe that $B:D(A)\times V\goesto H$ via 
    \begin{align}\label{est:B:ext:H}
        |B(u,v)|\leq C_A^{1/2}|Au|^{1/2}|u|^{1/2}\|v\|.
    \end{align}
Moreover, $B:V\times V\goesto V'$ is continuous and satisfies
    \begin{align}\label{est:B:ext}
        |\lp B(u,v),w\rp|\leq C_L\lVert u\rVert^{1/2}|u|^{1/2}\lVert v\rVert\|w\|^{1/2}|w|^{1/2}.
    \end{align} 
The Frech\'et derivative of $B$ will be denoted by $DB$. {{}It is straightforward to show that $DB$ is given by}
	\begin{align}\label{def:DB}
		DB(u)v=B(u,v)+B(v,u).
	\end{align}
By \eqref{est:B:ext:H}, it follows that $DB: D(A)\goesto L(D(A),H)$, $u\mapsto DB(u)$, while \eqref{est:B:ext} implies $DB: V\goesto L(V,V')$, where $L(X,Y)$ denotes the space of bounded linear operators $X\goesto Y$.

We recall the following classical global existence and uniqueness result for \eqref{eq:nse:ff}.

\begin{Prop}\label{prop:nse:ball}
Let $f\in L^\infty(0,\infty;H)$. Then for each $u_0\in V$ and $T>0$, there exists a unique solution $u\in C([0,T];V)\cap L^2(0,T;D(A))$ such that $u(0)=u_0$. Moreover, {for all $\de>0$}, there exists $t_0=t_0({\de},\|u_0\|,|f|)$ such that
    \begin{align}\label{est:absorb:L2}
        \sup\limits_{t\geq t_0}|u(t)|\leq {\sqrt{1+\de}}\nu\si_{-1}\Gr=:{\rho_0(\de)},\qquad \sup\limits_{t\geq t_0}\|u(t)\|\leq {\sqrt{1+\de}}\nu \Gr=:{\rho_1(\de)}.
    \end{align}
In fact, the balls $B_H({\rho_0(\de)})$ and $B_V({\rho_1(\de)})$ are forward-invariant sets for \eqref{eq:nse:ff} {for all $\de\geq0$}.
\end{Prop}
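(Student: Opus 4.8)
The plan is to follow the classical Galerkin construction of strong solutions, specialized to the regularizing structure available in the two-dimensional periodic setting, and then to read off the absorbing-ball estimates \eqref{est:absorb:L2} directly from the a priori bounds. Let $\{w_k\}$ denote the eigenbasis of the Stokes operator $A$ from \eqref{def:A}, let $P_m$ be the orthogonal projection onto the span of the first $m$ modes, and consider the finite-dimensional system $\frac{du_m}{dt}+\nu Au_m+P_mB(u_m,u_m)=P_mf$ with $u_m(0)=P_mu_0$. Local-in-time existence of $u_m$ is immediate from Picard--Lindel\"of, and global continuation will follow once the $m$-uniform estimates below are in hand. The whole argument then rests on two energy identities, both of which exploit orthogonality of the bilinear term.

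The two energy estimates are the heart of the matter. Pairing with $u_m$ and using $\lb B(u_m,u_m),u_m\rb=0$ (from \eqref{eq:B:skew}), then estimating $\lb f,u_m\rb\le\|f\|_*\|u_m\|$ by Young's inequality and invoking the Poincar\'e inequality \eqref{est:Poincare}, yields
\begin{align}\notag
\frac{d}{dt}|u_m|^2+\nu|u_m|^2\le \frac{1}{\nu}\si_{-1}^2\lp\sup_{t\ge0}|f(t)|\rp^2.
\end{align}
Gr\"onwall's inequality then gives $u_m\in L^\infty(0,\infty;H)\cap L^2_{loc}(0,\infty;V)$ and $\limsup_{t\to\infty}|u_m(t)|\le\nu\si_{-1}\Gr=\rho_0$, uniformly in $m$. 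Pairing instead with $Au_m$ is where the two-dimensional periodic geometry is essential: the enstrophy-invariance identity $\lb B(u_m,u_m),Au_m\rb=0$ holds, so the nonlinearity drops out entirely, and using $\lb f,Au_m\rb\le|f||Au_m|$, Young's inequality, and $\|u_m\|\le|Au_m|$ (from \eqref{est:interpolation:CS}), one obtains
\begin{align}\notag
\frac{d}{dt}\|u_m\|^2+\nu\|u_m\|^2\le\frac{1}{\nu}\lp\sup_{t\ge0}|f(t)|\rp^2.
\end{align}
Gr\"onwall again produces the uniform bounds $u_m\in L^\infty(0,T;V)$ and, after integrating the discarded $\nu|Au_m|^2$ term, $u_m\in L^2(0,T;D(A))$, together with $\limsup_{t\to\infty}\|u_m(t)\|\le\nu\Gr=\rho_1$. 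The entering time $t_0=t_0(\|u_0\|,|f|)$ is read off from the decaying transient $e^{-\nu t}\|u_0\|^2$, and forward invariance of $B_H(\rho_0)$ and $B_V(\rho_1)$ follows because the right-hand sides above force $\frac{d}{dt}|u_m|^2\le0$ when $|u_m|^2=\rho_0^2$ and $\frac{d}{dt}\|u_m\|^2\le0$ when $\|u_m\|^2=\rho_1^2$.

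Passage to the limit is then routine. The estimate \eqref{est:B:ext:H} bounds $B(u_m,u_m)$, hence $\frac{du_m}{dt}$, in $L^2(0,T;H)$, so the Aubin--Lions lemma extracts a subsequence converging strongly in $L^2(0,T;V)$ and weakly in $L^2(0,T;D(A))$; this strong convergence, combined with \eqref{est:B:ext}, lets one pass to the limit in $P_mB(u_m,u_m)$ and identify the limit $u$ as a solution of \eqref{eq:nse:ff} inheriting the bounds above. The regularity class $u\in L^\infty(0,T;V)\cap L^2(0,T;D(A))$ with $\frac{du}{dt}\in L^2(0,T;H)$ upgrades to $u\in C([0,T];V)$ by the standard Lions--Magenes interpolation lemma. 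For uniqueness I would test the difference equation for $w=u_1-u_2$ against $w$: the term $\lb B(u_2,w),w\rb$ vanishes, while $\lb B(w,u_1),w\rb$ is controlled via \eqref{est:B:ext} and Young's inequality to give $\frac{d}{dt}|w|^2\le C_L^2\nu^{-1}\|u_1\|^2|w|^2$, so $|w(0)|=0$ and $\int_0^T\|u_1\|^2\,dt<\infty$ force $w\equiv0$ by Gr\"onwall.

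The one step requiring genuine care, rather than bookkeeping, is the $V$-level estimate, and specifically the vanishing of $\lb B(u,u),Au\rb$. This orthogonality is a special feature of the periodic geometry---it fails on general domains, where a nonvanishing boundary integral obstructs it---and it is precisely what closes the $H^1$ a priori bound globally and delivers the clean absorbing radius $\rho_1=\nu\Gr$ rather than a bound with exponentially growing or otherwise degraded constants. I would therefore record a short, self-contained verification of this identity (expanding $(u\cdot\nabla)u$ against $-\De u$ componentwise and integrating by parts, with all boundary terms vanishing by periodicity and $\nabla\cdotp u=0$) as the lemma underpinning the second estimate; everything else is standard once this structural fact is in place.
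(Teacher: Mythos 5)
Your proposal is correct, and it is precisely the classical Galerkin argument that the paper invokes without proof: the paper merely recalls \cref{prop:nse:ball} as a classical result and notes that forward invariance follows from the Gr\"onwall inequalities \eqref{est:abs:ball}, which your two energy estimates (the $H$-level balance giving $\rho_0=\nu\si_{-1}\Gr$ and the $V$-level enstrophy balance giving $\rho_1=\nu\Gr$) reproduce exactly, constants included. Your identification of the periodic 2D identity $\lp B(u,u),Au\rp=0$ as the structural fact that closes the $V$-level bound is also exactly right; the only microscopic caveat is that, as stated, the finite-time entering claim $\sup_{t\geq t_0}|u(t)|\leq\rho_0$ is rigorously a $\limsup$ statement (or holds with any radius $\rho_0(1+\eps)$), an abuse shared by the paper's own formulation, and your $\limsup$ phrasing is the careful one.
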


We will refer to the solutions guaranteed by \cref{prop:nse:ball} as \textit{strong solutions}. We note that the forward-invariance of $B_H({\rho_0(\de)})$ and $B_V({\rho_1(\de)})$ follow from the elementary inequalities which hold for strong solutions of \eqref{eq:nse:ff}:
    \begin{align}\label{est:abs:ball}
        \begin{split}
        |u(t)|^2&\leq e^{-\nu t}|u_0|^2+{\rho_0(\de)}^2(1-e^{-\nu t}),
        \\
        \lVert u(t)\rVert^2&\leq e^{-\nu t}\lVert u_0\rVert^2+{\rho_1(\de)}^2(1-e^{-\nu t}),
        \end{split}
    \end{align}
for all $t\geq0$ and $u_0\in V$.

We will also make use of the global well-posedness of the corresponding initial value problems for \eqref{eq:sync} and \eqref{eq:nudge}, {as well as their asymptotic synchronization properties}, which were developed in \cite{OlsonTiti2003} and \cite{AzouaniOlsonTiti2014}, respectively. We state them here for the sake of completeness. For both statements, given $f\in L_{loc}^\infty(0,\infty;H)$ and $u_0\in V$, we let $u$ denote the unique global-in-time solution to \eqref{eq:nse:ff} such that $u\in C([0,T];V)\cap L^2(0,T;D(A))$ and $\frac{du}{dt}\in L^2(0,T;H)$, for all $T>0$.

\begin{Prop}[Theorem 3.1 \& 3.3, \cite{OlsonTiti2003}]\label{thm:OT}
 For any $N>0$ and $q_0\in V$ such that $Q_Nq_0=q_0$, there exists a unique $q$ such that $q\in C([0,T];V)\cap L^2(0,T;D(A))$, $\frac{dq}{dt}\in L^2(0,T;H)$, for all $T>0$, and satisfies \eqref{eq:sync}. In particular, for $v=P_Nu+q$, the pair $(u,v)$ equivalently satisfies the following system of equations:
    \begin{align}\label{eq:sync:OT}
        \begin{split}
        \frac{du}{dt}+\nu Au+B(u,u)&=f,\quad u(0)=u_0\\
        \frac{dv}{dt}+\nu Av+B(v,v)&=f+P_N\left(B(v,v)-B(u,u)\right),\quad v(0)=P_Nu_0+q_0.
        \end{split}
    \end{align}
{Moreover, there exist positive constants $c_{OT}, K_{OT}$ such that for $N_{OT}=c_{OT}\rho_1\nu^{-1}$, it holds that}
    \begin{align}\label{eq:OT:converge}
        {|v(t;v_0)-u(t;u_0)|\leq K_{OT}|v_0-u_0|e^{-\frac{\nu}2t}},
    \end{align}
{for all $N\geq N_{OT}$}.
\end{Prop}

\begin{Prop}[Theorem 1 \& 6, \cite{AzouaniOlsonTiti2014}]\label{thm:AOT}
For any $N>0$ and $\tv_0\in V$, there exists a unique $\tv$ such that $\tv\in C([0,T];V)\cap L^2(0,T;D(A))$, $\frac{d\tv}{dt}\in L^2(0,T;H)$, for all $T>0$, and satisfies \eqref{eq:nudge}. In particular, the pair $(u,\tv)$ satisfies the following system of equations:
    \begin{align}\label{eq:nudge:AOT}
        \begin{split}
        \frac{du}{dt}+\nu Au+B(u,u)&=f,\quad u(0)=u_0\\
        \frac{d\tv}{dt}+\nu A\tv+B(\tv,\tv)&=f-\mu P_N\tv+\mu P_Nu,\quad \tv(0)=\tv_0.
        \end{split}
    \end{align}
{Moreover, there exist positive constants $c_{AOT}, K_{AOT}$ such that for $N_{AOT}=c_{AOT}\rho_1\nu^{-1}$ and $\frac{1}4\nu N_{AOT}^2\leq \mu\leq \frac{1}4\nu N^2$, it holds that}
    \begin{align}\label{eq:AOT:converge}
        {|\tv(t;\tv_0)-u(t;u_0)|\leq K_{AOT}|\tv_0-u_0|e^{-(\mu/2)t},}
    \end{align}
{for all $N\geq N_{AOT}$.}
\end{Prop}

\section{The Infinite Nudging Limit of the {Nudging Algorithm}}\label{sect:limit}

Throughout this section, we let $f\in L^\infty(0,\infty;H)$. For $u_0\in V$, let $u$ denote the unique global-in-time strong solution of \eqref{eq:nse:ff} corresponding to $u_0$ guaranteed by \cref{prop:nse:ball}. Without loss of generality, we will assume throughout this section that the reference solution has evolved sufficiently far in time to satisfy the estimates \eqref{est:absorb:L2} at $t=0$. In particular, we may suppose that $t_0=0$ in \cref{prop:nse:ball}. 

%Note that this is equivalent to assuming that all initial conditions belong to $B_H(\rho_0)\cap B_V(\rho_1)$.

%The main result of this section is given by the following statement.

Given $N>0$, let $p_0=P_Nu_0$, $p=P_Nu$ and ${{}Q}=Q_Nu$, so that ${{}Q(0)=Q_Nu_0={{}Qu_0}}$ and $u=p+{{}Q}$. Then {the true dynamics, as determined by \eqref{eq:nse:ff}, are {{}equivalently} represented by}
    \begin{align}\label{eq:nse:high:low}
        \begin{split}
        \frac{dp}{dt}+\nu Ap+P_NB(p+{{}Q},p+{{}Q})&=P_Nf,\quad p(0)=p_0,
        \\
        \frac{d{{}Q}}{dt}+\nu A{{}Q}+Q_NB(p+{{}Q},p+{{}Q})&=Q_Nf,\quad {{}Q}(0)={{}{{}Qu_0}}
        \end{split}
    \end{align}
Now, given $q_0\in Q_NV$, we let $v$ denote the unique output of the {direct-replacement algorithm} \eqref{def:sync} guaranteed by \cref{thm:OT}, so that $P_Nv=p$. Then if we denote $q=Q_Nv$, it follows that $v=p+q$, where $p,q$ satisfy
    \begin{align}\label{eq:nse:sync:high:low}
        \begin{split}
        \frac{dp}{dt}+\nu Ap+P_NB(p+{{}Q},p+{{}Q})&=P_Nf,\quad p(0)=p_0,
        \\
        \frac{dq}{dt}+\nu Aq+Q_NB(p+q,p+q)&=Q_Nf,\quad q(0)=q_0.
        \end{split}
    \end{align}
Lastly, given $\tv_0\in V$, we let $\tv$ denote the unique, global-in-time strong solution of the {{}nudging algorithm} \eqref{eq:nudge} corresponding to $\tv_0$ guaranteed by \cref{thm:AOT}. We let $\tp=P_N\tv$ and $\tq=P_N\tv$, so that $\tv_0=\tp_0+\tq_0$. Then
    \begin{align}\label{eq:nse:nudge:high:low}
        \begin{split}
        \frac{d\tp}{dt}+\nu A\tp+P_NB(\tp+\tq,\tp+\tq)&=P_Nf-\mu \tp+\mu p,\quad \tp(0)=\tp_0,
        \\
        \frac{d\tq}{dt}+\nu A\tq+Q_NB(\tp+\tq,\tp+\tq)&=Q_Nf,\quad \tq(0)=\tq_0.
        \end{split}
    \end{align}
{{}With these three representations \eqref{eq:nse:high:low}, \eqref{eq:nse:sync:high:low}, \eqref{eq:nse:nudge:high:low} now in view, upon recalling the notation developed in \cref{sect:intro}, it is now clear} that $\tsS(p;\tv_0)=\tp+\sS(\tp;
\tq_0)$. {{}This representation of $\tsS$ allows us to study the high-modes of the nudging variable, $\tv$, through the mapping $\sS$, and thus allow for a convenient comparison with the high-modes of the direct-replacement variable, $v$.} {For the convenience of the reader, let us quickly summarize the notation above, which we will maintain for the remainder of the paper:}
    \[
        {u=p+{{}Q}=P_Nu+Q_Nu,\quad v=p+q=P_Nv+Q_Nv,\quad \tv=\tp+\tq=P_N\tv+Q_N\tv,}
    \]
{with corresponding initial values given by}
    \[
        {u(0)=u_0=p_0+{{}{{}Qu_0}},\quad v(0)=v_0=p_0+q_0,\quad \tv(0)=\tv_0=\tp_0+\tq_0.}
    \]
{{}We emphasize that our convention is such that $P_Nv_0=P_Nu_0=p_0$ always holds, while this is not necessarily imposed for $\tp_0=P_N\tv_0$; we will treat the cases when $\tp_0=p_0$ and $\tp_0\neq p_0$ separately.} 

{Ultimately, we prove \cref{thm:main} in steps. First, we establish a stronger version of \eqref{eq:main:claim2}, where we allow the time-horizon to grow as $\mu$ increases, and we also accommodate the case of incompatible initial data, i.e., $\tp_0\neq p_0$, but $\tq_0=q_0$, between the nudging and direct-replacement algorithm.} 

\begin{Thm}\label{thm:nudge:limit}
{Let $v_0,\tv_0\in V$}. Then there exists $T:[0,\infty)\goesto[0,\infty)$, $\mu\mapsto T(\mu)$, such that $T$ is strictly increasing, $\lim\limits_{\mu\goesto\infty}T(\mu)=\infty$, and {{}if $\tv_0=v_0$}, then
    \begin{align}\label{est:nudge:infinity}
        \lim\limits_{\mu\goesto\infty}\sup\limits_{t\in[{0},T(\mu)]}|\tv(t;v_0)-v(t;v_0)|=0,
    \end{align}
{Moreover, if $\tp_0\neq p_0$, but $\tq_0=q_0$, then for all $T>0$
    \begin{align}\label{est:nudge:infinity:positive}
        {{}\lim\limits_{t_0\goesto0^+}}\limsup\limits_{\mu\goesto\infty}\sup\limits_{t\in[t_0,T]}|\tv(t;{{}\tp_0+q_0})-v(t;{{}p_0+q_0})|=0.
    \end{align}
}
\end{Thm}

{Next, we establish a variation on the convergence result of Azouani-Olson-Titi \cite[Theorem 1]{AzouaniOlsonTiti2014}, which allows us guarantee asymptotic synchronization of the {nudging algorithm} with the true solution provided that enough modes are observed.}

\begin{Prop}\label{thm:nudge:converge}
{
%Let $u$ be a strong solution of \eqref{eq:nse:ff} corresponding to initial data $u_0\in V$. 
For $N_0=4C_L\rho_1\nu^{-1}$, it holds that
    \begin{align}\label{est:nudge:converge}
        |\tv(t;\tv_0)-u(t;u_0)|\leq e^{-\frac{\nu}2t}|\tv_0-u_0|.
    \end{align}
for all $N\geq N_0$, $\mu\geq \frac{9}{16}N_0^2\nu$, and $\tv_0\in V$.
}
\end{Prop}

{Note that in contrast with \cref{thm:AOT}, \textit{no upper bound is imposed} on $\mu$ in \cref{thm:nudge:converge}. However, the effective trade-off in doing so is in the exponential rate of convergence, sacrificing a rate $\mu$ in \eqref{eq:AOT:converge} for the potentially much slower rate of $\nu$ in \eqref{est:nudge:converge}. Upon combining \cref{thm:nudge:limit} and \cref{thm:nudge:converge}, we may immediately obtain \eqref{eq:main:claim3} as a corollary. Note that we also analogously establish the case of incompatible initial data.}

\begin{Cor}\label{cor:nudge:complete}
{There exists $N_*$ such that if $\tv_0=v_0$, then for all $N\geq {{}N_*}$
    \begin{align}\label{eq:nudge:complete}
        \lim\limits_{\mu\goesto\infty}\sup\limits_{t\geq 0}|\tv(t;{{}v_0})-v(t;v_0)|=0,
    \end{align}
Moreover, if $\tp_0\neq p_0$, but $\tq_0=q_0$, then
    \begin{align}\label{eq:nudge:complete:positive}
        {\lim\limits_{t_0\goesto0^+}\limsup\limits_{\mu\goesto\infty}\sup\limits_{t\geq t_0}|\tv(t;{{}\tp_0+q_0})-v(t;{{}p_0+q_0})|=0.}
    \end{align}
}
\end{Cor}

{{{}We point out that the key insight in establishing convergence of the nudging algorithm to the direct replacement algorithm over an infinite time horizon is that when $N$ is sufficiently large, the two algorithms already asymptotically converge to the true dynamics that is commonly being observed. In this regime, convergence in the infinite nudging limit over all time reduces to establishing convergence in the infinite nudging limit over \textit{arbitrary finite time intervals}; of course this property is precisely what is guaranteed by \cref{thm:nudge:limit}. Indeed, assuming  \cref{thm:nudge:limit}, \cref{thm:nudge:converge} are true, let us first prove \cref{cor:nudge:complete}.}}

{
\begin{proof}[Proof of \cref{cor:nudge:complete}]

For any $T>0$, observe that
    \begin{align}
        &\sup\limits_{t\geq 0}|\tv(t;\tv_0)-v(t;v_0)|\notag
        \\
        &\leq \sup\limits_{t\in[0,T]}|\tv(t;\tv_0)-v(t;v_0)|+\sup\limits_{t\geq T}|\tv(t;\tv_0)-u(t;u_0)|+\sup\limits_{t\geq T}|u(t;u_0)-v(t;v_0)|.\notag
    \end{align}
Fix $\veps>0$. Let $N_*=\max\{N_{OT}, N_0\}$, $\mu \geq N_*^2\nu$, and $N\geq N_*$. By \cref{thm:nudge:converge}, there exists $T_0^{(\veps)}$ such that for all $T\geq T_0^{(\veps)}$
    \begin{align}\label{est:aot}
        \sup\limits_{t\geq T}|\tv(t;v_0)-u(t;u_0)|<\frac{\veps}3.
    \end{align}
By \cref{thm:OT}, there exists $T_{OT}^{(\veps)}$ such that for all $T\geq T_{OT}^{(\veps)}$
    \begin{align}\label{est:ot}
        \sup\limits_{t\geq T}|u(t;u_0)-v(t;v_0)|<\frac{\veps}3.
    \end{align}
Let $T_\veps:=\max\{T_0^{(\veps)}, T_{OT}^{(\veps)}\}$. 

If $\tv_0=v_0$, then by \cref{thm:nudge:limit} \eqref{est:nudge:infinity}, there exists $\mu_\veps$ such that for all $\mu\geq \mu_\veps$
    \begin{align}\notag
        \sup\limits_{t\in[0,T_\veps]}|\tv(t;v_0)-v(t;v_0)|<\frac{\veps}3.
    \end{align}
Upon combining this with {{}\eqref{est:aot} and \eqref{est:ot}}, we therefore conclude that
    \[
    \sup\limits_{t\geq {}0}|\tv(t;v_0)-v(t;v_0)|<\veps,
    \]
holds for all $\mu \geq \max\{\mu_\veps, N_*^2\nu\}$, which establishes \eqref{eq:nudge:complete}.

On the other hand, {{}fix $t_0\in(0,T]$. If} $\tq_0=q_0$, but $\tp_0\neq p_0$, then by \cref{thm:nudge:limit} \eqref{est:nudge:infinity:positive}, there exists $\mu_\veps$ such that for all $\mu\geq \mu_\veps$ 
    \[  
        \sup\limits_{t\in[{{}t_0},T]}|\tv(t;\tv_0)-{{}v}(t;u_0)|\leq {{}o(1)},
    \]
{{}as a sequence in $t_0$. \cref{thm:nudge:limit} \eqref{est:nudge:infinity:positive} then further asserts that there exists $t_\veps\in(0,T]$ such that for all $t_0\in(0,t_\veps)$, one has
        \[  
        \sup\limits_{t\in[{{}t_0},T]}|\tv(t;\tv_0)-{{}v}(t;u_0)|\leq \frac{\veps}3,
    \]
} Upon combining this with {{}\eqref{est:aot} and \eqref{est:ot} for $T\geq T_\veps$}, we deduce \eqref{eq:nudge:complete:positive}.
\end{proof}
}

{{{}Therefore, \cref{thm:main} will be proved upon establishing both \cref{thm:nudge:limit} and \cref{thm:nudge:converge}}. The remainder of this section is now dedicated to proving \cref{thm:nudge:limit} and \cref{thm:nudge:converge}. To simplify the forthcoming analysis, note that by \eqref{est:abs:ball}, it will be convenient to assume that $u_0\in B_H(\rho_0(\de))$ and $B_V(\rho_1(\de))$. Indeed, given $u_0\in V$, there exists $\de>0$ such that $|u_0|\leq \rho_0(\de)$ and $\|u_0\|\leq\rho_1(\de)$. Henceforth, it is without loss of generality that we set the convention that $\rho_0=\rho_0(\de)$ and $\rho_1=\rho_1(\de)$.} 

We first seek to prove  \cref{thm:nudge:limit}. To this end, we begin by establishing an elementary stability estimate.

\begin{Lem}\label{lem:w:bad}
Let $u_0,\tv_0\in V$. Suppose that $u_0\in {B_V(\rho_1)}$. Then
    \begin{align}\label{est:w:bad}
        \sup\limits_{t\in[0,T]}{\|\tv(t;\tv_0)-u(t;u_0)\|^2}\leq {C_*(T)^2\|\tv_0-u_0\|^2},
    \end{align}
for all $T>0$ and $N>0$, where 
    \begin{align}\label{def:Cstar}
        C_*(T)=\exp\left[{\frac{27C_A^2}{256}}\left(\frac{\rho_1}{\nu}\right)^2\nu T\right].
    \end{align}
\end{Lem}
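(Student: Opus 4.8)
The plan is to run a single energy estimate on the difference $w := \tv - u$ measured in the $H$-norm, exploiting the fact that the feedback term in \eqref{eq:nudge} carries a favorable sign. First I would subtract the functional NSE \eqref{eq:nse:ff} for $u$ from the nudging equation \eqref{eq:nudge} for $\tv$. Writing $\tv = u + w$ and expanding by bilinearity, $B(\tv,\tv) - B(u,u) = B(u,w) + B(w,u) + B(w,w)$, while the feedback term collapses to $-\mu P_N\tv + \mu P_Nu = -\mu P_Nw$. Thus $w$ solves
\[
\frac{dw}{dt} + \nu Aw + B(u,w) + B(w,u) + B(w,w) = -\mu P_Nw, \qquad w(0) = \tq_0 - Qu_0,
\]
where the initial datum follows from $\tv_0 - u_0 = (p_0+\tq_0)-(p_0+Qu_0)$, since both states share the same low-mode component $p_0$.

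Next I would pair this equation with $w$ in $H$. The feedback contribution $(-\mu P_Nw, w) = -\mu|P_Nw|^2 \leq 0$ is nonpositive and may simply be discarded; this is precisely where the argument becomes uniform in both $\mu$ and $N$, matching the claim that \eqref{est:w:bad} holds for all $N>0$ with a $\mu$-independent constant. By the skew-symmetry \eqref{eq:B:skew}, the terms $(B(u,w),w)$ and $(B(w,w),w)$ vanish, leaving only $(B(w,u),w)$. This surviving term I would control with the Ladyzhenskaya-type bound \eqref{est:B:ext}, applied with first slot $w$, second slot $u$, and test function $w$, which yields $|(B(w,u),w)| \leq C_L|w|\,\|w\|\,\|u\|$, so that
\[
\frac{1}{2}\frac{d}{dt}|w|^2 + \nu\|w\|^2 \leq C_L|w|\,\|w\|\,\|u\|.
\]

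A Young's inequality of the form $C_L|w|\,\|w\|\,\|u\| \leq \frac{\nu}{2}\|w\|^2 + \frac{C_L^2}{2\nu}\|u\|^2|w|^2$ absorbs the gradient factor into the viscous term, and discarding the remaining $\frac{\nu}{2}\|w\|^2 \geq 0$ leaves the differential inequality $\frac{d}{dt}|w|^2 \leq \frac{C_L^2}{\nu}\|u\|^2|w|^2$. Since $u_0\in B_V(\rho_1)$ and $B_V(\rho_1)$ is forward-invariant by \cref{prop:nse:ball}, I may replace $\|u(t)\|$ by its uniform bound $\rho_1$ and integrate via Grönwall to obtain $|w(t)| \leq |w(0)|\exp\!\big(\tfrac{C_L^2\rho_1^2}{2\nu}t\big)$. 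This exponent is dominated by the one defining $C_*(T)$ in \eqref{def:Cstar}, so \eqref{est:w:bad} follows upon recalling $|w(0)| = |\tq_0 - Qu_0|$ and taking the supremum over $[0,T]$.

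I do not anticipate a genuine obstacle: this is a textbook stability argument, and the only point requiring care is the sign of the feedback term. Recognizing that $-\mu|P_Nw|^2$ is nonpositive is exactly what permits the bound to hold with a constant independent of $\mu$ — indeed the estimate reduces to the standard stability estimate for the difference of two 2D NSE trajectories, unaffected by the nudging — and it is this $\mu$-uniformity that makes \cref{lem:w:bad} the right crude input for the subsequent infinite-nudging analysis.
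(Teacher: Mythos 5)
Your proposal is correct and follows essentially the same route as the paper: subtract the two equations, test the difference $w=\tv-u$ against itself in $H$, observe that the feedback term contributes the nonpositive (hence discardable) quantity $-\mu|P_Nw|^2$, kill $(B(u,w),w)$ and $(B(w,w),w)$ by skew-symmetry, bound the surviving term $(B(w,u),w)$ via \eqref{est:B:ext} and Young's inequality, invoke the forward invariance of $B_V(\rho_1)$ from \cref{prop:nse:ball}, and conclude by Gr\"onwall. The only differences are immaterial choices of Young's-inequality constants, and your resulting exponent is indeed dominated by the $C_*(T)$ in \eqref{def:Cstar}.
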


\begin{proof}
Let $w=\tv-u$. Then
    \begin{align}\label{eq:w}
        \frac{dw}{dt}+\nu Aw+B(w,w)+DB(u)w=-\mu \tp+\mu p,\quad w(0)={\tv_0-u_0}.
    \end{align}
{Observe that $w(0)=(\tp_0-p_0)+(\tq_0-q_0)$}. Upon taking the {$V$}--inner product of \eqref{eq:w} with $w$ and invoking \eqref{eq:B:enstrophy}, we obtain
    \begin{align}\notag
        {\frac{1}2\frac{d}{dt}\|w\|^2+\nu|Aw|^2+\mu\|P_Nw\|^2=-\lpp B(w,u),w\rpp}.
    \end{align}
By {H\"older's inequality}, \eqref{est:B:ext:H}, and Young's inequality, we have
    \begin{align}\notag
        |{\lpp B(w,u),w\rpp}|&\leq {C_A^{1/2}|Aw|^{3/2}|w|^{1/2}\|u\|\leq \nu|Aw|^2+\frac{27C_A^2}{256\nu^3}\|u\|^4\|w\|^2}.
    \end{align}
Thus, by \cref{prop:nse:ball} and \eqref{est:abs:ball}, we have
    \begin{align}\notag
        {\frac{d}{dt}\|w\|^2+\mu\|P_Nw\|^2}\leq \nu {\frac{27C_A^2}{128}\lp\frac{\rho_1}{\nu}\rp^4\|w\|^2}.
    \end{align}
By Gr\"onwall's inequality, we therefore deduce 
    \begin{align}\notag
        {\|w(t)\|^2\leq \exp\left(\frac{27C_A^2}{128}\lp\frac{\rho_1}{\nu}\rp^2 \nu t\right)\|\tv_0-u_0\|^2},
    \end{align}
which implies \eqref{est:w:bad}.
\end{proof}

Next, we show how the stability estimate \cref{lem:w:bad} yields a stability estimate on the low-mode error with a favorable dependence on $\mu$.

\begin{Lem}\label{lem:y}
Let $u_0,\tv_0\in V$. Suppose that $u_0\in B_H(\rho_0)\cap B_V(\rho_1)$. Then
    \begin{align}\label{est:y}
        |\tp(t;\tp_0)-p(t;p_0)|^2\leq e^{-2\mu t}|\tp_0-p_0|^2+ \frac{\nu^3}{\mu}\tC({\tv_0,u_0},T)^2,
    \end{align}
for all $0\leq t\leq T$, $T>0$, and $N>0$, where
    \begin{align}\label{def:tC}
        \tC({\tv_0,u_0},T)^2=2C_L^2\left[\left(\frac{\rho_0\rho_1}{\nu^2}\right)^2+C_*(T)^4{\left(\frac{\|\tv_0-u_0\|^2}{\nu^2}\right)^2}\right],
    \end{align}
where $C_*(T)$ is given by \eqref{def:Cstar}. In particular, {for any $t_0>0$, one has}
    \begin{align}\label{est:y:transient}
        {\sup\limits_{t\geq t_0}}|\tp(t;\tp_0)-p(t;p_0)|^2\leq {e^{-2\mu t_0}}|\tp_0-p_0|^2+ \frac{\nu^3}{\mu}\tC(\tv_0,u_0,T)^2,
    \end{align}
{while, on the other hand}, if $\tp_0=p_0$, then
    \begin{align}\label{est:y:alt}
        \sup\limits_{t\in[0,T]}|\tp(t;p_0)-p(t;p_0)|^2\leq \frac{\nu^3}{\mu}\tC({\tq_0},{{}{{}Qu_0}},T)^2.
    \end{align}
\end{Lem}

\begin{proof}
Let $y=\tp-p$. Then $y_0=\tp_0-p_0$ and
	\begin{align}
		\frac{dy}{dt}+\nu Ay+P_NB(\tv,\tv)-P_NB(u,u)&=-\mu y,\quad y(0)=y_0.\label{eq:error:low}
	\end{align}
In particular, for $w=\tv-u$, \eqref{eq:error:low} can be rewritten as
    \begin{align}\label{eq:y}
		\frac{dy}{dt}+\nu Ay&=-P_NB(w,w)-DP_NB(u)w-\mu y.
    \end{align}
Upon taking the $H$--inner product of \eqref{eq:y} with $y$, one obtains the following energy balance for the low-mode error:
	\begin{align}\label{eq:y:balance}
	\frac{1}2\frac{d}{dt}|y|^2+\nu\|y\|^2+\mu|y|^2&=-\lp B(w,w)+DB(u)w,y\rp.
	\end{align}
By \eqref{eq:B:skew}, \eqref{est:B:ext}, and Young's inequality, we have
    \begin{align}
        |\lp B(w,w),y\rp|&=|\lp B(w,y),w\rp|\notag
        \\
        &\leq C_L\|w\|\|y\||w|\leq \frac{\nu}2\|y\|^2+\frac{C_L^2}{2\nu}\|w\|^2|w|^2\notag
        \\
        |\lp DB(u)w,y\rp|&\leq |\lp B(u,y),w\rp|+|\lp B(w,y),u\rp|\notag
        \\
        &\leq 2C_L\|u\|^{1/2}|u|^{1/2}\|y\|\|w\|^{1/2}|w|^{1/2}\notag
        \\
        &\leq \frac{\nu}2\|y\|^2+C_L^2\nu^3\left[\left(\frac{\|u\||u|}{\nu^2}\right)^2+\left(\frac{\|w\||w|}{\nu^2}\right)^2\right].\notag
    \end{align}
Combining these estimates in \eqref{eq:y:balance} yields
    \begin{align}\notag
        \frac{d}{dt}|y|^2+2\mu|y|^2&\leq 4C_L^2\nu^3\left[\left(\frac{\|u\||u|}{\nu^2}\right)^2+\left(\frac{\|w\||w|}{\nu^2}\right)^2\right].
    \end{align}
Applying \cref{prop:nse:ball}, \eqref{est:abs:ball}, and \cref{lem:w:bad} gives
    \begin{align}
        \frac{d}{dt}|y|^2+2\mu|y|^2&\leq 4C_L^2\nu^3\left[\left(\frac{\rho_0\rho_1}{\nu^2}\right)^2+C_*(T)^4{\left(\frac{\|\tv_0-u_0\|^2}{\nu^2}\right)^2}\right].\notag
    \end{align}
It then follows from Gr\"onwall's inequality {and orthogonality} that
    \begin{align}\notag
        |y(t)|^2\leq e^{-2\mu t}|y_0|^2+ 2C_L^2\frac{\nu^3}{\mu}\left\{\left(\frac{\rho_0\rho_1}{\nu^2}\right)^2+C_*(T)^4{\left[\left(\frac{\|\tp_0-p_0\|}{\nu}\right)^2+\left(\frac{\|\tq_0-{{}{{}Qu_0}}\|}{\nu}\right)^2\right]^2}\right\}.
    \end{align}
{This establishes \eqref{est:y}, and when $\tp_0=p_0$, it establishes \eqref{est:y:alt}, as desired.}
\end{proof}

The last ingredient is to show that the operator $\sS$ mapping $p\mapsto \sS(p)=q$, where $q$ satisfies the high-mode component of \eqref{eq:nse:sync:high:low}, is a local Lipschitz mapping. In order to prove this, we will require {a priori} bounds on \eqref{eq:sync}. Let us therefore establish these {a priori} bounds first.
\begin{comment}
To this end, it will be convenient to generalize \eqref{eq:sync} by assuming that $p(\cdotp)$ is an arbitrarily given continuous trajectory contained in $H$. In particular, $p(\cdotp)$ need not represent the low-modes of a solution to the Navier-Stokes equation. 
Let $0<T\leq \infty$, $N>0$, and $q_0\in H$. We consider the map $\sS(\cdotp;q_0):C([0,T);P_NH)\goesto C([0,T);H)$, $p\mapsto \sS(p;q_0)=q$, such that
    \begin{align}\label{eq:sync}
        \frac{dq}{dt}+\nu Aq+Q_NB(p+q,p+q)=Q_Nf,\quad q(0)=q_0,\quad t\in(0,T].
    \end{align}
In particular, $\sS$ denotes the solution operator of the initial value problem \eqref{eq:sync:app}. Our current goal is to develop an {a priori} estimates for solutions of \eqref{eq:sync:app} for each given $q_0\in H$, that is, we obtain a suitable upper bound on the solution operator $\sS(\cdotp;q_0)$. 
\end{comment}
%In what follows, we let
    %\begin{align}\label{def:sync:app}
     %   v:=p+\sS(p;q_0),\quad q=\sS(p;q_0),
    %\end{align}
{In particular, we} claim the following.

\begin{Lem}\label{lem:sync}
For all $T\in(0,\infty]$, $N>0$, and $q_0\in {V}$, there exists $\Cq>0$, {{}independent of $\mu$}, such that
    \begin{align}\label{est:sync}
         \sup\limits_{t\in[0,T]}\|\sS(p;q_0)(t)\|\leq\Cq,
    \end{align}
In particular
    \begin{align}\label{def:Cq}
        \Cq^2=\exp\left[4C_AN^2\left(\frac{\Cp}{\nu}\right)^2\nu T\right]\left\{\|q_0\|^2+\frac{\nu^{2}}2\left[N^2\left(\frac{\Cp}{\nu}\right)^{2}+\frac{\Gr^2}{C_AN^2}\left(\frac{\Cp}{\nu}\right)^{-2}\right]\right\},
    \end{align}
{where we let $\Cp$ denote the constant}
\begin{align}\label{est:p}
        {\Cp=\sup\limits_{t\in[0,T]}|p(t)|.}
    \end{align}
\end{Lem}

{{}We emphasize that in the subsequent application of \cref{lem:sync}, the constant $\Cp$ will also be independent of $\mu$; the independence of $\Cp,\Cq$ on $\mu$ is crucial.}

\begin{proof}
The enstrophy balance for \eqref{eq:sync} is given by
    \begin{align}\notag
        \frac{1}2\frac{d}{dt}\|q\|^2+\nu|Aq|^2=-\lp B(p,p)+B(q,p)+B(p,q),Aq\rp+\lp f,Aq\rp.
    \end{align}
By \eqref{eq:B:skew}, \eqref{eq:B:enstrophy}, \eqref{est:B:ext}, \eqref{est:interpolation}, \eqref{est:Poincare}, \eqref{est:Bernstein}, \eqref{est:p}, and Young's inequality we have
    \begin{align}
        |\lp B(p,p), Aq\rp|&\leq C_A^{1/2}|Ap|^{1/2}|p|^{1/2}\|p\||Aq|\leq C_A^{1/2}N^2\Cp^2|Aq|\leq \frac{C_AN^4}{\nu}\Cp^4+\frac{\nu}4|Aq|^2\notag
        \\
        |\lp B(q,p),Aq\rp|&\leq |q||\nabla p|_\infty|Aq|\leq \frac{C_A^{1/2}}N\|A p\|^{1/2}\|p\|^{1/2}\|q\||Aq|\leq C_A^{1/2}N\Cp\|q\||Aq|\notag
        \\
        &\leq \frac{C_AN^2}{\nu}\Cp^2\|q\|^2+\frac{\nu}4|Aq|^2\notag
        \\
        |\lp B(p,q),Aq\rp|&\leq C_A^{1/2}|Ap|^{1/2}|p|^{1/2}\|q\||Aq|\leq C_A^{1/2}N|p|\|q\||Aq|\leq \frac{C_AN^2}{\nu}\Cp^2\|q\|^2+\frac{\nu}4|Aq|^2.\notag
    \end{align}
Also, by the Cauchy-Schwarz inequality, Young's inequality, and \eqref{def:Grashof}  we have
    \begin{align}
        |\lp f,Aq\rp|&\leq |f||Aq|\leq \nu^3\Gr^2+\frac{\nu}4|Aq|^2.\notag
    \end{align}
Upon combining the above, we arrive at
    \begin{align}
        \frac{d}{dt}\|q\|^2\leq 4C_AN^2\nu\left(\frac{\Cp}{\nu}\right)^2\|q\|^2+2C_AN^4\nu^3\left(\frac{\Cp}{\nu}\right)^4+2\nu^3\Gr^2\notag.
    \end{align}
An application of Gr\"onwall's inequality, then yields
    \begin{align}
        \|q(t)\|^2\leq \exp\left[4C_AN^2\left(\frac{\Cp}{\nu}\right)^2\nu T\right]\left\{\|q_0\|^2+\frac{\nu^{2}}2\left[N^2\left(\frac{\Cp}{\nu}\right)^{2}+\frac{\Gr^2}{C_AN^2}\left(\frac{\Cp}{\nu}\right)^{-2}\right]\right\},\notag
    \end{align}
as desired.
\end{proof}

\begin{comment}
From \cref{lem:sync:app}, which asserts that for each $T>0$, $N>0$, and $q_0\in H$, and $p\in C([0,T];H)$, there exists $\Cq<\infty$ such that
    \begin{align}\label{est:sync}
        \sup\limits_{t\in[0,T]}\|q(t)\|\leq \Cq,
    \end{align}
where, for $\Cp=\sup\limits_{t\in[0,T]}|p(t)|$, $\Cq$ is given by
    \begin{align}\label{def:Cq}
         \Cq^2=\exp\left[4C_AN^2\left(\frac{\Cp}{\nu}\right)^2\nu T\right]\left\{\|q_0\|^2+\frac{\nu}2\left[N^2\left(\frac{\Cp}{\nu}\right)^4+\frac{\Gr^2}{C_AN^2}\right]\right\}
    \end{align}
\end{comment}

We are now ready to establish the local Lipschitz property of the operator $\sS(\cdotp;q_0)$.

\begin{Thm}\label{thm:S:lipschitz}
For each $T>0$, $N>0$, $q_0\in V$, the map $\sS(\cdotp;q_0):C([0,T];P_NH)\goesto C([0,T];{{}Q_NH})$ is locally Lipschitz. In particular, for any ball $B({\Cp_0})\subset C([0,T];P_NH)$ of radius ${\Cp_0}>0$, {centered at $0$}, there exists a constant $C_\sS$ such that 
    \begin{align}\label{est:S:lipschitz}
        \sup\limits_{t\in[0,T]}|\sS(p_1;q_0)(t)-\sS(p_2;q_0)(t)|\leq C_\sS{\left(\nu T\right)^{1/2}}\sup\limits_{t\in[0,T]}|p_1(t)-p_2(t)|,
    \end{align}
whenever $p_1,p_2\in B({\Cp_0})$, {where the constant $C_\sS$ is given by}
    \begin{align}\label{def:Cs}
        &{C_{\sS}^2\leq
       {2N^2}\left[{{C_L^2}\left(\frac{2{\Cp_0}}{\nu}\right)^2}+{4}C_A\right]\exp\left\{{2}C_L^2N^2\left[\left(\frac{{{\Cp_0}}+{\Cq_*}}{\nu}\right)^2+{\left(\frac{2{\Cp_0}}{\nu}\right)^2}\right]\nu T\right\}},
        %{\left(1+\frac{4C_A}{C_L^2}\right)\exp\left\{2C_L^2N^2\left[\left(\frac{{\Cp_0}+\Cq}{\nu}\right)^2+\left(\frac{2{\Cp_0}}{\nu}\right)^2\right]\nu T\right\}}.
    \end{align}
{where $\Cq_*:=\min\left\{\sup\limits_{t\in[0,T]}|q_1(t)|,\sup\limits_{t\in[0,T]}|q_2(t)|\right\}$.} {Moreover, given $\tq_0\in V$, we have}
    \begin{align}\label{est:S:lipschitz:positive}
        \text{\small $\sup\limits_{t\in[0,T]}|\sS(p_1;{\tq_0})(t)-\sS(p_2;q_0)(t)|\leq {\tilde{C}_{\sS,1}|\tq_0-q_0|}+{\tilde{C}_{\sS,2}}\sup\limits_{t\in[0,T]}|p_1(t)-p_2(t)|,$}
    \end{align}
{where}
    \begin{align}\label{def:Cstilde}
        \begin{split}
        &\tilde{C}_{\sS,1}^2\leq {\exp\left\{2C_L^2N^2\left[\left(\frac{{\Cp_0}+{\Cq_*}}{\nu}\right)^2+{\left(\frac{\sup\limits_{t\in[0,T]}|p_1(t)-p_2(t)|}{\nu}\right)^2}\right]\nu T\right\}}
        \\
        &\tilde{C}_{\sS,2}^2\leq \left({{}1}+\frac{{4}C_A}{C_L^2}\right){\tilde{C}_{\sS,1}^2}.
        \end{split}
    \end{align}
\end{Thm}

\begin{proof}
For $j=1,2$, let $p_j\in C([0,T]; P_NH)$, $q_j=\sS(p_j(\cdotp);q_0)$, and $v_j=p_j+q_j$. Let $\pi=p_1-p_2$ and $\kap=q_1-q_2$, so that $\kap(0)=0$. Then
    \begin{align}\label{eq:kap}
        \frac{d\kap}{dt}+\nu A\kap+Q_NB(p_1+q_1,p_1+q_1)-Q_NB(p_2+q_2,p_2+q_2)=0,\quad \kap(0)=0.
    \end{align}
For $j=1,2$, let $\Cp_j=\sup\limits_{t\in[0,T]}|p_j(t)|$ and $\Cq_j=\sup\limits_{t\in[0,T]}\|{{}q_j(t)}\|$, {so that $\Cp_*=\min\{\Cp_1,\Cp_2\}$ and $\Cq_*=\min\{\Cq_1,\Cq_2\}$.} {Note that $\Cp_*<\Cp$ by assumption.}
%, where $\Cq_j$ is the constant in \eqref{est:sync}. 

Now observe that
    \begin{align}\label{eq:B:rewrite}
        &B(p_1+q_1,p_1+q_1)-B(p_2+q_2,p_2+q_2)\notag
        \\
        &=B(p_1,p_1)+B(p_1,q_1)+B(q_1,p_1)+B(q_1,q_1)\notag
        \\
        &\quad-\left(B(p_2,p_2)+B(p_2,q_2)+B(q_2,p_2)+B(q_2,q_2)\right)\notag
        \\
        &=B(\pi,\pi)+DB({v_2})\pi+B(\kap,\kap)+DB(v_2)\kap+DB(\pi)\kap.
        \end{align}
Then the energy balance for \eqref{eq:kap} is given by
    \begin{align}\notag
        \frac{1}2\frac{d}{dt}|\kap|^2+\nu\|\kap\|^2=-\lp B(\kap,v_2)+B(\kap,\pi),\kap\rp-\lp B(\pi,\pi)+DB({{}v_2})\pi,\kap\rp.
    \end{align}
By \eqref{eq:B:skew}, \eqref{est:B:ext}, \eqref{est:interpolation}, \eqref{est:Bernstein}, \eqref{est:sync}, and Young's inequality we have
    \begin{align}
        |\lp B(\kap,v_2),\kap\rp|&\leq C_L\|v_2\|\|\kap\||\kap|%\leq \textcolor{red}{C_L^2\nu\left(\frac{N\Cp_1+\Cp_2}{\nu}\right)^2|\kap|^2+\frac{\nu}4\|\kap\|^2}\notag
        {\leq 
        C_L^2 \nu \lp \frac{N\Cp_2 + \Cq_2}{\nu}\rp^2|\kappa|^2 + \frac{\nu}4\|\kappa\|^2
        }\notag
        \\
        %|\lp B(\kap,\pi),\kap\rp|&\leq C_L\|\pi\|\|\kap\||\kap|\leq C_LN|\pi|\|\kap\||\kap|\leq C_L^2N^2\nu\left(\frac{\Cp_1+\Cp_2}{\nu}\right)^2|\kap|^2+\frac{\nu}4\|\kap\|^2\notag
        |\lp B(\kap,\pi),\kap\rp|&\leq C_L\|\pi\|\|\kap\||\kap|\leq C_LN|\pi|\|\kap\||\kap|\leq {\frac{C_L^2N^2}{\nu}|\pi|^2}|\kap|^2+\frac{\nu}4\|\kap\|^2\notag
        \\
        |\lp B(\pi,\pi),\kap\rp|&\leq C_L\|\pi\||\pi|\|\kap\|\leq C_LN|\pi|^2\|\kap\|
        %\leq \textcolor{red}{C_L^2N^2\nu\left(\frac{\Cp_1+\Cp_2}{\nu}\right)^2|\pi|^2}\notag
        %\\
        %{\leq C_L^2N^2\nu\left(\frac{\Cp_1+\Cp_2}{\nu}\right)^2|\pi|^2 + \frac{\nu}4\|\kappa\|^2}\notag
        \leq {\frac{C_L^2N^2}{\nu}|\pi|^4} + \frac{\nu}4\|\kappa\|^2\notag
        \\
        |\lp DB({v_2})\pi,\kap\rp|&\leq 2C_A^{1/2}|{v_2}|\|\kap\||{A}\pi|^{1/2}|\pi|^{1/2}\leq 2C_A^{1/2}N|{v_2}|\|\kap\||\pi|\notag
        \\
        &\leq 4C_AN^2\nu\left(\frac{\Cp_2+{\Cq_2}}{\nu}\right)^2|\pi|^2+\frac{\nu}4\|\kap\|^2\notag.
    \end{align}
Upon combining the above estimates in \eqref{eq:kap}, we arrive at
    \begin{align}
        \frac{d}{dt}|\kap|^2%\textcolor{red}{\leq 2C_L^2N^2\nu\left[\left(\frac{\Cp_2+\Cq_2}{\nu}\right)^2+\left(\frac{\Cp_1+\Cp_2}{\nu}\right)^2\right]|\kap|^2}\notag
        %\\
        %&\textcolor{red}{\quad+2N^2\nu\left[C_L^2\left(\frac{\Cp_1+\Cp_2}{\nu}\right)^2+2C_A\left(\frac{\Cp_2}{\nu}\right)^2\right]|\pi|^2.}\notag
        %\\
        &{\leq 
        2C_L^2N^2 \nu \left[\lp \frac{\Cp_2 + \Cq_2}{\nu}\rp^2 +
        %\left(\frac{\Cp_1+\Cp_2}{\nu}\right)^2
        {\left(\frac{|\pi|}{\nu}\right)^2}\right]|\kap|^2}\notag
        \\
        &{\quad+ 2N^2\nu\left[C_L^2{\left(\frac{|\pi|}{\nu}\right)^2}
        %\left(\frac{\Cp_1+\Cp_2}{\nu}\right)^2 
        + 4C_A\left(\frac{\Cp_2+{\Cq_2}}{\nu}\right)^2\right]|\pi|^2.
        }\notag
    \end{align}
{Now observe that upon repeating the above analysis for $\pi=p_2-p_1$ and $\kap=q_2-q_1$, we deduce by symmetry that}
        \begin{align}
        \frac{d}{dt}|\kap|^2%\textcolor{red}{\leq 2C_L^2N^2\nu\left[\left(\frac{\Cp_2+\Cq_2}{\nu}\right)^2+\left(\frac{\Cp_1+\Cp_2}{\nu}\right)^2\right]|\kap|^2}\notag
        %\\
        %&\textcolor{red}{\quad+2N^2\nu\left[C_L^2\left(\frac{\Cp_1+\Cp_2}{\nu}\right)^2+2C_A\left(\frac{\Cp_2}{\nu}\right)^2\right]|\pi|^2.}\notag
        %\\
        &{\leq 
        2C_L^2N^2 \nu \left[\lp \frac{{\Cp_* + \Cq_*}}{\nu}\rp^2 +
        %\left(\frac{\Cp_1+\Cp_2}{\nu}\right)^2
        {\left(\frac{|\pi|}{\nu}\right)^2}\right]|\kap|^2}\notag
        \\
        &{\quad+ 2N^2\nu\left[C_L^2{\left(\frac{|\pi|}{\nu}\right)^2}
        %\left(\frac{\Cp_1+\Cp_2}{\nu}\right)^2 
        + 4C_A\left(\frac{{\Cp_*}+{\Cq_*}}{\nu}\right)^2\right]|\pi|^2.
        }\notag
    \end{align}
%{By} \cref{lem:sync}, we see that upon {bounding} $\Cp_j$ {by} $\Cp$, we {may deduce from \eqref{est:sync} that} $\Cq_j\leq \Cq$, where $\Cq$ is {given by} \eqref{def:Cq}. Thus, for $p_1,p_2\in B(\Cp)$, we have
%    \begin{align}
%        \frac{d}{dt}|\kap|^2
        %&\leq \textcolor{red}{8C_L^2N^2\nu\left[\left(\frac{\Cp+\Cq}{\nu}\right)^2+\left(\frac{\Cp}{\nu}\right)^2\right]|\kap|^2+8N^2\nu(C_L^2+C_A)\left(\frac{\Cp}{\nu}\right)^2|\pi|^2.}\notag
        %\\
        %&\leq 
        %2C_L^2N^2 \nu \left[\lp \frac{\Cp + \Cq}{\nu}\rp^2 +
        %\left(\frac{2\Cp}{\nu}\right)^2\right]|\kap|^2
        %+ 2N^2\nu\left[C_L^2\left(\frac{2\Cp}{\nu}\right)^2 + 4C_A\left(\frac{\Cp+{\Cq}}{\nu}\right)^2\right]|\pi|^2.\notag
%    \end{align}
 {Since $\kap(0)=0$}, an application of Gr\"onwall's inequality yields
     \begin{align}
        |\kap(t)|^2&\leq{2N^2}\left[{{C_L^2}\left(\frac{2\Cp}{\nu}\right)^2}+{4}C_A\right]{\nu T}\notag
        \\
        &\quad \times\exp\left\{{2}C_L^2N^2\left[\left(\frac{{\Cp_*}+{\Cq_*}}{\nu}\right)^2+{\left(\frac{2\Cp}{\nu}\right)^2}\right]\nu T\right\}\sup\limits_{t\in[0,T]}|\pi(t)|^2,\notag
    \end{align}
{which implies \eqref{est:S:lipschitz}. On the other hand, if $\kap(0)\neq0$, then we may alternatively obtain}
    \begin{align}
        %|\kap(t)|^2&\leq{\left(1+\frac{{4}C_A}{C_L^2}\right)\exp\left\{{2}C_L^2N^2\left[\left(\frac{\Cp+\Cq}{\nu}\right)^2+\left(\frac{{2}\Cp}{\nu}\right)^2\right]\nu T\right\}\sup\limits_{t\in[0,T]}|\pi(t)|^2,}\notag
        &|\kap(t)|^2\leq {\exp\left\{2C_L^2N^2\left[\left(\frac{\Cp_*+{\Cq_*}}{\nu}\right)^2+{{}\left(\frac{\sup\limits_{t\in[0,T]}|\pi(t)|}{\nu}\right)^2}\right]\nu T\right\}|\kap(0)|^2}\notag
        \\        
        &+\left({{}1}
        %{\left(\frac{\sup\limits_{t\in[0,T]}|\pi(t)|}{\nu}\right)^2}
        +\frac{{4}C_A}{C_L^2}\right) \times\exp\left\{{2}C_L^2N^2\left[\left(\frac{{\Cp_*}+{\Cq_*}}{\nu}\right)^2+{{}\left(\frac{\sup\limits_{t\in[0,T]}|\pi(t)|}{\nu}\right)^2}\right]\nu T\right\}\sup\limits_{t\in[0,T]}|\pi(t)|^2.\notag
        %\\
        %&\textcolor{blue}
        %{\leq
        %\exp\left\{8C_L^2N^2\left[\left(\frac{\Cp+\Cq}{\nu}\right)^2+\left(\frac{2\Cp}{\nu}\right)^2\right]\nu T\right\}
        %\frac{\left[\left(\frac{\Cp}{\nu}\right)^2 + \frac{C_A}{C_L^2}\left(\frac{\Cp}{\nu}\right)^2\right]}
        %{\left[\left(\frac{\Cp+\Cq}{\nu}\right)^2+\left(\frac{2\Cp}{\nu}\right)^2\right]} 
        %\sup\limits_{t\in[0,T]}|\pi(t)|^2}.\notag
    \end{align}
This implies \eqref{est:S:lipschitz:positive}, as desired.
\end{proof}

\begin{Rmk}
{{}In assessing the stability of the nonlinear terms, our analysis rewrites $B(p_1+q_1,p_1+q_1)-B(p_2+q_2,p_2+q_2)$ as \eqref{eq:B:rewrite}. This choice of representation is intentional. Indeed, the insight is that the high-mode evolution is effectively a linearized Navier-Stokes system that is forced by the low-modes. Thus, as long as the perturbations along which the nonlinearity is linearized, i.e., $DB(v_2)\kap$, $DB(\pi)\kap$ are bounded, then all other terms which remain in the subsequent energy balance, i.e., $B(\pi,\pi), DB(v_2)\pi$ are viewed as forcing terms.
}
\end{Rmk}
\begin{comment}
We are now ready to prove the main theorem, \cref{thm:main}, from \cref{sect:intro}.

%\begin{Thm}\label{thm:converge}
%Suppose that $u_0\in B_H(\rho_0)\cap B_V(\rho_1)$ and %$q_0\in V$. Then for any $T,N>0$ it holds that
%    \begin{align}\label{eq:converge}
%        \lim\limits_{\mu\goesto\infty}\sup\limits_{t\in[0,T]}|\tv(t;\tv_0)-v(t;v_0)|=0,
%    \end{align}
%whenever $P_N\tv_0=P_Nv_0$.
%\end{Thm}

\begin{proof}[Proof of \cref{thm:main}]
Let $p=P_Nu$, where $u(\cdotp;u_0)$ is the unique global strong solution of \eqref{eq:nse:ff} corresponding to $u_0$. By \eqref{est:abs:ball}, $p(\cdotp)\subset P_NB_H(\rho_0)$.  In particular, $|p| \leq \rho_0$. Let $\tq=\sS(\tp;q_0)$ and $q=\sS(p;q_0)$, so that ${\tv}=\tp+\tq$ represents the unique solution of \eqref{eq:nse:nudge:high:low} corresponding to $\tv_0=p_0+q_0$ and $v=p+q$ represents the unique solution of \eqref{eq:nse:sync:high:low} corresponding to $v_0=p_0+q_0$. By \cref{lem:y}, it follows that
    \begin{align}\label{eq:converge:low}
        \lim\limits_{\mu\goesto\infty}\sup\limits_{t\in[0,T]}|\tp(t;p_0)-p(t;p_0)|=0.
    \end{align}
Hence, $p(\cdotp), \tp(\cdotp)\subset P_NB_H(\rho_0)$, for $\mu$ sufficiently large. By \cref{thm:S:lipschitz}, it follows that
    \begin{align}\label{eq:converge:high}
        \lim\limits_{\mu\goesto\infty}\sup\limits_{t\in[0,T]}|\tq(t;\tp)-q(t;p)|=0.
    \end{align}
By orthogonality, \eqref{eq:converge:low} and \eqref{eq:converge:high} imply \eqref{eq:main:claim2}.
\end{proof}

\end{comment}

{We are now in a position to} prove \cref{thm:nudge:limit}, which is a refinement of \eqref{eq:main:claim2} {from \cref{thm:main} wherein we quantify} the relationship between the time window size $T$ and nudging parameter $\mu$. 

\begin{proof}[Proof of \cref{thm:nudge:limit}]

To begin, suppose that $\mu\geq \nu$. Recall that we assume $v_0=p_0+q_0$, where $p_0=P_Nu_0$. Thus $\sup\limits_{t\in[0,T]}|p(t)|\leq\rho_1$. To invoke \cref{thm:S:lipschitz}, let $\Cp_0=2(\rho_1+|\tp_0|+\nu\tC)$, where $\tC$ is defined by \eqref{def:tC}. Note that $\Cp_0$ is independent of $\mu$. Then by \cref{lem:y} we have $\sup\limits_{t\in [0,T]}|p(t;p_0)|$, $\sup\limits_{t\in[0,T]}|\tp(t;\tp_0)|<\Cp_0$. Moreover, the appearance of {{}the minimum in} $\Cq_*$ in \eqref{est:S:lipschitz} allows us to invoke \cref{lem:sync} for $q=\sS(p;q_0)$, rather than $\sS(\tp;\tq_0)$, so that we may choose $\Cq_*=\Cq$, where $\Cq$ is defined by \eqref{def:Cq}.

First, we treat the case $\tv_0=v_0$. {{}In particular $\tp_0=p_0$}. Then upon combining \cref{lem:y} and \cref{thm:S:lipschitz} \eqref{est:S:lipschitz}, we deduce
    \begin{align}\label{est:converge:refined}
        \sup\limits_{t\in[0,T]}|\tq(t;\tp)-q(t;p)|^2\leq {C_{\sS}^2(\nu T)\tC(\tq_0,{{}Qu_0},T)^2}\frac{\nu^3}{\mu},
    \end{align}
where $C_{\sS}, \tC$ are defined by \eqref{def:Cs}, \eqref{def:tC}, respectively. {Now fix $0<\eps<1$ and choose $T:=T(\mu)$} such that the right-hand side of \eqref{est:converge:refined} is $O((\nu/\mu)^{\eps})$. More precisely, $C_\sS^2(\nu T)\tC^2\sim \exp\exp(C\nu T)$, for some sufficiently large constant $C>0$, depending on $\rho_0, \rho_1, q_0, N, G$. Thus, we let $T\sim (C\nu)^{-1}\ln\ln (\mu/\nu)^{1-\eps}$. For this choice of $T$, we have { from \eqref{est:converge:refined} that}
    \begin{align}\notag
        \lim\limits_{\mu\goesto\infty}\sup\limits_{t\in[0,T(\mu)]}|\tq(t;\tp)-q(t;p)|=0,
    \end{align}
which implies \eqref{est:nudge:infinity} {for $t_0=0$}, as claimed.

{Now suppose that $\tq_0=q_0$, but $\tp_0\neq p_0$. Given any $t_0>0$, we combine \cref{lem:y} and \cref{thm:S:lipschitz} \eqref{est:S:lipschitz} to deduce
    \begin{align}\label{est:converge:refined:transient}
    |\tq(t_0;\tp)-q(t_0;p)|^2\leq C_{\sS}^2(\nu t_0)\left(|\tp_0-p_0|^2+\tC(\tv_0,u_0,t_0)^2\frac{\nu^3}{\mu}\right),
    \end{align}
Then for $t\in[t_0,T]$, we invoke \cref{thm:S:lipschitz} \eqref{est:S:lipschitz:positive} to obtain
    \begin{align}
        |\tq(t;\tp)-q(t;p)|^2\leq \tilde{C}_{\sS,1}^2|\tq(t_0;\tp)-q(t_0;p)|^2+\tilde{C}_{\sS,2}^2\sup\limits_{t\in[t_0,T]}|p_1(t)-p_2(t)|^2,
    \end{align}
and invoke \cref{lem:y} over $[t_0,T]$, so that
    \begin{align}
        \tilde{C}_{\sS,1}^2&\leq \exp\left\{2C_L^2N^2\left[\left(\frac{\Cp+{\Cq_*}}{\nu}\right)^2+{e^{-2\mu t_0}\left(\frac{|\tp_0-p_0|}{\nu}\right)^2+ \frac{\nu}{\mu}\tC(\tv_0,u_0,T)^2}\right]\nu T\right\}\notag
        \\
        \tilde{C}_{\sS,2}^2&\leq \left(
        %{e^{-2\mu t_0}\left(\frac{|\tp_0-p_0|}{\nu}\right)^2+ \frac{\nu}{\mu}\tC(\tv_0,u_0,T)^2}
        {{}1}+\frac{{4}C_A}{C_L^2}\right)\tilde{C}_{\sS,1}^2.\notag
    \end{align}
Therefore
    \begin{align}
        |\tq(t;\tp)-q(t;p)|^2&\leq \tilde{C}_{\sS,1}^2(\nu t_0)\left(|\tp_0-p_0|^2+\tC(\tv_0,u_0,t_0)^2\frac{\nu^3}{\mu}\right)\notag
        \\
        &\quad +\tilde{C}_{\sS,2}^2\left(e^{-2\mu t_0}\left(\frac{|\tp_0-p_0|}{\nu}\right)^2+ \frac{\nu}{\mu}\tC(\tv_0,u_0,T)^2\right)\notag.
    \end{align}
It follows that
    \begin{align}
        \limsup\limits_{\mu\goesto\infty}\sup\limits_{t\in[t_0,T]}|\tq(t;\tp)-q(t;p)|^2\leq \exp\left\{2C_L^2N^2\left(\frac{\Cp+\Cq_*}{\nu}\right)^2\nu T\right\}(\nu t_0)|\tp_0-p_0|^2.\notag
    \end{align}
Sending $t_0\goesto0^+$ then completes the proof.
}
\end{proof}

{We conclude the section with a proof of \cref{thm:nudge:converge}. We recall that the goal here is to show that for $N$ sufficiently large and $\mu$ sufficiently large relative to $N$, but finite, one achieves asymptotic convergence of $\tv$ to $u$.}

\begin{proof}[Proof of \cref{thm:nudge:converge}]
{
Let $w=\tv-u$. Then \eqref{eq:w} holds.
%    \begin{align}
%        \frac{dw}{dt}+\nu Aw+B(w,w)+DB(u)w=-\mu P_Nw,\quad w(0)=\tv_0-u_0.\notag
%    \end{align}
Upon taking the $H$-inner product of \eqref{eq:w} with $w$ and invoking \eqref{eq:B:skew}, we obtain
    \begin{align}\label{eq:z:balance}
        \frac{1}2\frac{d}{dt}|w|^2+\nu \|w\|^2+\mu|P_Nw|^2=\lp B(w,u),w\rp.
    \end{align}
Decomposing into low-modes and high-modes, we obtain
    \begin{align}
        \lp B({{}w}, u), {{}w}\rp&=\lp B({{}P_Nw}, u), {{}P_Nw}\rp+\lp B({{}P_Nw}, u), {{}Q_Nw}\rp\notag
        \\
        &\quad+\lp B({{}Q_Nw}, u), {{}P_Nw}\rp+\lp B({{}Q_Nw}, u), {{}Q_Nw}\rp\notag.
    \end{align}
We estimate each term using \eqref{est:interpolation}, \eqref{est:interpolation:CS}, \eqref{est:Bernstein}, and Young's inequality:
    \begin{align}
        |\lp B({{}w}, u),{{}w}\rp|&\leq C_L\|{{}P_Nw}\||{{}P_Nw}|\|u\|+C_L\|{{}Q_Nw}\||{{}Q_Nw}|\|u\|\notag
        \\
        &\quad+2C_L\|{{}P_Nw}\|^{1/2}|{{}P_Nw}|^{1/2}\|u\|\|{{}Q_Nw}\|^{1/2}|{{}Q_Nw}|^{1/2}\notag
        \\
        &\leq 3C_L\rho_1\|{{}w}\||{{}P_Nw}|+\frac{C_L\rho_1}{N}\|{{}w}\|^2\leq \nu\left(\frac{1}4+\frac{C_L\rho_1}{N\nu}\right)\|{{}w}\|^2+\frac{9C_L^2\rho_1^2}{\nu}|{{}P_Nw}|^2\notag.
    \end{align}
Upon returning to \eqref{eq:z:balance} and invoking the assumptions on $N$ and $\mu$, it follows that
    \begin{align}\notag
        \frac{d}{dt}|{{}w}|^2+\nu\|{{}w}\|^2\leq 0.
    \end{align}
An application of Poincar\'e's inequality and Gr\"onwall's inequality yields \eqref{est:nudge:converge}.
}
\end{proof}

\begin{Rmk}\label{rmk:main}
{
For $\tv$ satisfying \eqref{eq:nudge}, one only has the a priori bound in general:
    \begin{align}\label{est:v:apriori}
        |\tv(t)|^2\leq e^{-\nu t}|\tv(0)|^2+\nu^2\Gr^2+\mu\Cp^2,
    \end{align}
which clearly blows up as $\mu\goesto\infty$. Upon choosing $N$ sufficiently large, depending on $\mu$, one may obtain a priori bounds independent of $\mu$. However, such an assumption obviously precludes taking $\mu\goesto\infty$ independent of $N$. Notably, in this regime, one may show that $\tv\goesto u$ as $t\goesto\infty$. Indeed, this is the main result of \cite{AzouaniOlsonTiti2014} (see \cref{thm:AOT}).

If one considers the low-mode evolution of $\tv$, then it is, in principle, possible that the high-modes may sufficiently excite the low-modes through the nonlinearity, \textit{in spite} of the presence of the stabilizing mechanism provided by nudging in the low-mode evolution, and in such a way that saturates the bound \eqref{est:v:apriori}. Thus, one of the accomplishments in proving \cref{thm:main} is firstly, the recognition that this pessimistic scenario can be circumvented by instead directly studying the low-mode error. In particular, we show through \cref{lem:w:bad} and \cref{lem:y} that the nudging feedback control does indeed provide sufficient stabilization of the low-mode errors. Secondly, we show that this low-mode stabilization can effectively be conferred to the high-mode evolution of the errors via the local Lipschitz property of the mapping $\sS$, provided that one is confined to a finite, but arbitrarily sized, time-horizon, and in a manner that is independent of $N$.
%the low-modes of the nudged Navier-Stokes solution, $\tv$, converges to the low-modes of the Navier-Stokes, $\tu$, uniformly-in-time whenever the systems are initialized identically on the low-mode subspace (\cref{lem:y}). 
{{}It is crucial that in the domain of the map $\sS$, that is in the low-modes, the Lipschitz property is posed on a ball} %of finite-dimensionally determined trajectories 
{{}of sufficiently large radius} to contain all possible asymptotic behavior of the observations (see \cref{thm:S:lipschitz}). This provides for the freedom to evaluate $\sS$ along both $p$ and $\tp$ or, in other words, to not distinguish between whether the argument arises as low-modes of the NSE or of its corresponding nudged equation. 

%Notably, this local Lipschitz property of $\sS$ is genuinely nonlinear;
%the velocity high-mode evolution governed by \eqref{eq:sync} is Lipschitz in a ball of finite-dimensionally determined trajectories that is sufficiently large enough to contain all possible asymptotic behavior of the observations (\cref{thm:S:lipschitz}). 
%were the low-mode and high-mode evolutions not coupled through nonlinearity, such a property would not be possible.  Indeed, 

%Therefore, it is the presence of nonlinearity in \eqref{eq:nse}, facilitated through the local Lipschitz property of $\sS$, that ultimately allows one to overcome the potentially destabilizing effects of driving a nonlinear system towards the observations in an increasingly singular fashion through nudging.
}

%\textcolor{green}{Liz: make a remark on the fourier support of $f$ and how this affects the dimensionality of the attractor
%}
%{\color{blue}Vincent: We don't actually make use of finite-dimensionality of the dynamics in obtaining our finite-time convergence result, so it may best to only mention this to referee if needed and leave it out of the manuscript.}
\end{Rmk}

\begin{Rmk}\label{rmk:heat}
{An important point of comparison for our results is the linear heat equation. In this case, the corresponding nudged variable, $\tv$, solves
    \begin{align}\label{eq:heat:nudge}
        \bdy_t\tv+\nu A\tv=f-\mu \tp+\mu p,
    \end{align}
Then $\tv$ enjoys a priori bounds that are independent of $\mu$ owing to linearity and orthogonality:
    \begin{align}
        |\tp(t)|^2\leq e^{-\mu t}|\tp(0)|^2+\frac{\nu^3}{\mu}\Gr^2+\Cp^2,\quad
        |\tq(t)|^2\leq e^{-\nu t}|\tq(0)|^2+\nu^2\Gr^2\notag.
    \end{align}
Due to linearity, these a priori bounds directly imply $\tp\goesto p$ as $\mu\goesto\infty$.
%\textcolor{green}{; indeed,
%\begin{align}
%    \partial_t (\tilde{p}-p) + \nu A(\tilde{p}-p) &= f - \mu (\tilde{p}-p)\\
%    \partial_t (\tilde{q}-q) + \nu A(\tilde{q}-q) &= f
%\end{align}
%which implies 
%\begin{align}
%    |\tilde{p}(t)-p(t)|^2 &\leq e^{-(\mu+\nu)t} |\tilde{p}(0)-p(0)|^2
%    \\
%    |\tilde{q}(t)-q(t)|^2 &\leq e^{-\nu t} |\tilde{q}(0)-q(0)|^2.
%\end{align}
%}
%In this context, the high-modes of the {direct-replacement algorithm} are identical to the high-modes of the {nudging algorithm} whenever they are initially equal. 
Due to orthogonality, the high-modes evolve independently of the low-modes; this automatically implies the impossibility of the high-mode errors to converge to zero within any finite-time window in the infinite-nudging limit unless the high-modes are initially identical.  Nevertheless, by dissipativity of the system, the high-mode errors asymptotically converge to zero. 
%\textcolor{green}{as $t \to \infty$}. 
%\textcolor{green}{Note that these equations illuminate how $\mu\to \infty$ in the $p-\tilde{p}$ equation produces the same convergence result as $t\to\infty$ in the $p-\tilde{p}$ equation.}

From this point of view, \cref{thm:main} constitutes a nontrivial extension to the nonlinear setting represented by \eqref{eq:nse} in a paradigmatic way. This extension is primarily facilitated by establishing the local Lipschitz property of the mapping $\sS$, which is what allows one to transfer low-mode convergence (a linear phenomenon) to high-mode convergence on arbitrary finite-time horizons, an ultimately nonlinear phenomenon. On the other hand, on a given bounded set of low-mode trajectories, the system \eqref{eq:sync} is not guaranteed to be dissipative unless $N$ is taken sufficiently large; this observation, in conjunction with the local Lipschitz property of $\sS$, is what allows us to additionally establish global-in-time convergence.
}

%\textcolor{green}{
%\cref{thm:main} constitutes a nontrivial extension of this linear convergence to the nonlinear setting of \eqref{eq:nse} in the context of \eqref{eq:nudge:AOT}.  This extension is primarily facilitated by establishing the local Lipschitz property of the mapping $\sS$, which is a genuinely nonlinear property in this setting, and ultimately what allows us to mimic the convergence behavior of the high modes in the dissipative linear setting (given enough modes, $N$).  Proving the direct-replacement algorithm \eqref{eq:sync} is dissipative is known only to hold for $N$ sufficiently large, due to lack of cancellation of the bilinear terms in the energy estimates.  These two observations are what allow us to to establish global-in-time convergence of the nudging algorithm to the direct-replacement algorithm.
%}
\end{Rmk}

\begin{Rmk}
The recent works \cite{LiHawkinsRebholzVargun2023, DiegelLiRebholz2024} also studied the effect of large $\mu$ in the context of finite element discretizations of the 2D NSE. It is shown in \cite{DiegelLiRebholz2024} that the error analysis of the discretization scheme can be made to be independent of the nudging parameter. However, their analysis does not establish a convergence result in the passage to the infinite-$\mu$ limit to a corresponding discretization of the {direct-replacement algorithm}. Nevertheless, comprehensive numerical tests are carried in both works demonstrating convergence of the numerical approximation to the assumed observed values. In comparison, the results of our numerical results carried out with pseudo-spectral methods are consistent the results in \cite{LiHawkinsRebholzVargun2023, DiegelLiRebholz2024}.
\end{Rmk}

\section{Computational Results}\label{sect:numerical}

\subsection{Numerical Methods}

Simulations of the 2D Navier-Stokes equations are performed in MATLAB (R2023b) using a fully dealiased pseudo-spectral code defined on the periodic box $\mathbb{T}^2 = [-\pi,\pi]^2$. That is, the spatial derivatives were calculated by multiplication in Fourier space. The equations were simulated at the stream function level, i.e. the 2D Navier-Stokes equations were written in the following form:
\begin{align}\label{scheme:NSE}
    \begin{split}
\psi_t + \De^{-1}(\nabla^{\perp}\psi\cdot\nabla)\De\psi &= \nu\De\psi + \De^{-1}\nabla^{\perp}\cdot f,
%\omega &= \nabla^\perp\cdotp u= -\De \psi.
    \end{split}
\end{align}
where $\nabla^\perp=(-\bdy_y,\bdy_x)$ and $\De^{-1}$ denotes the inverse Laplacian, which is taken with respect to the periodic boundary conditions and the mean-free condition. The initial condition and parameters were chosen as in \cite{Franz_Larios_Victor_2021} such that our simulations coincide with a turbulent regime. Specifically, the viscosity, $\nu$ was chosen to be $\nu = 0.0001$, and the body force chosen as in \cite{OlsonTiti2008} to be low-mode forcing concentrated over a band of frequencies with $10\leq |\vec{k}|^2 \leq 12$. The forcing term is normalized such that the Grashof number $G = \frac{\|f\|_{L^2}}{\nu^2} = 500,000$.  The spatial resolution utilized for our simulations is $N = 2^{10}$, which yields $341.\overline{3}$ active Fourier modes. The initial data was generated by running the 2D NSE solver forward with zero initial condition out to time $t = 10,000$. The spectrum of the initial data can be seen in \cref{fig:spectrum}, we note that the initial profile is well-resolved, as the energy spectrum decays to machine precision (approximately $2.2216 e$--$16$) before the $2/3$ dealiasing line; all simulations presented within this work remain well-resolved for the duration of each simulation.

The time-stepping scheme we utilized was a semi-implicit scheme, where we handle the linear diffusion term implicitly via an integrating factor in Fourier space. For an overview of integrating factor schemes see e.g.\cite{Kassam_Trefethen_2005, Trefethen_2000_spML} and the references contained within. The equations are then evolved using an implicit Euler scheme, with the nonlinear term being treated explicitly and the AOT feedback-control term implicitly.

\begin{figure}
    \centering        
    % \missingfigure[\width=.85\linewidth]
    \includegraphics[width=0.85\linewidth]{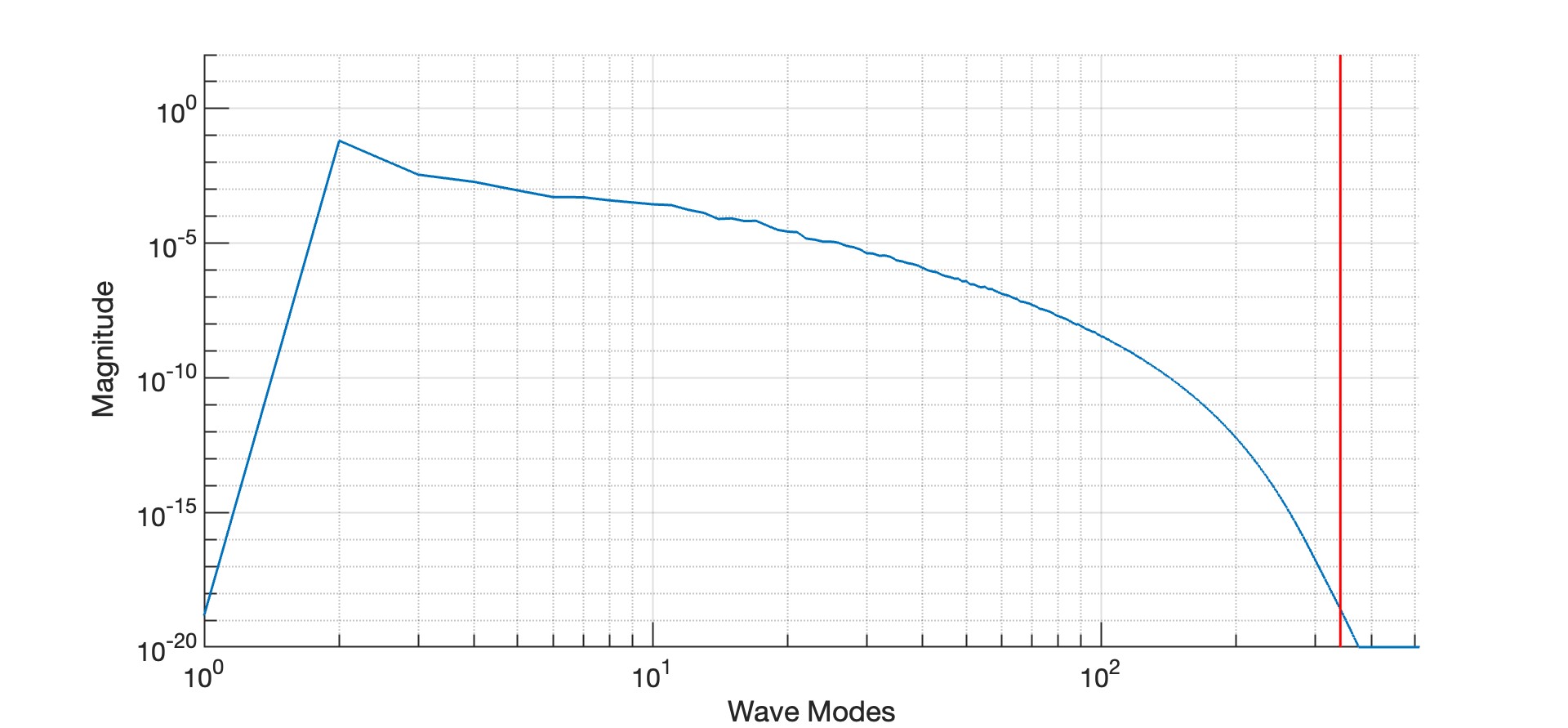}
    \caption{Energy spectrum of the initial data with $\nu = 0.0001$, $G = 500,000$, and $\Delta t = 0.001$. The vertical red line is the 2/3 dealiasing cutoff as $\frac{2}{3}\frac{N}{2} = 341.\overline{3}.$}
    \label{fig:spectrum}
\end{figure}

We emphasize that implicit Euler for the time-stepping was chosen for its simplicity and its ability to accommodate large values of $\mu$, which is crucial for this study. The implicit treatment of the AOT feedback-control term allowed us to relax the CFL condition that is typically present in explicit implementations as $\mu \lesssim \frac{2}{dt}$, where $dt$ is size of the time step, thus facilitating the exploration of the $\mu \goesto \infty$ regime.

In order to test each method of data assimilation described in the previous sections, we performed a series of ``identical twin'' experiments. These experiments are commonly used to test methods of data assimilation and involve running two separate simulations, one for the ``truth'' and one which uses data assimilation to attempt to recover said truth. These simulations are run in the same time loop, with the reference solution being utilized to generate observational data for the data assimilation process.

Additionally, in our study, we examine two different paradigms of observational data, deterministic and noisy observations. The deterministic observations are direct observations of the true state, whereas the noisy observations are polluted with Gaussian white noise. For details on how the noisy observations are generated, see \cref{sect:noisy_obs}.

\subsection{Convergence of Nudging to Synchronization -- Deterministic Observations}

In this section we describe the results of various numerical tests comparing rates of convergence across a range of $\mu$-values. In all of the trials discussed below, we initialize all schemes with identical observational data that is given by a low-mode Fourier projection of the truth solution. We note that while all methods should work with arbitrary initial conditions, we initialize all tests with the first set of observational data in order to establish initialization as a control variable for our experiments. 
%in order to even the playing field between each method.

%We examine the convergence rates for various values of $\mu$ with deterministic observations. 
In \cref{fig:det:obs:resolved}, we see that when using observational data from the lowest 100 Fourier modes, we obtain convergence to machine precision for all $\mu \geq 1$. In contrast, (see \cref{fig:det:obs:unresolved}) when too few Fourier modes are observed, we \textit{do not} obtain convergence to the reference solution. Regardless, as $\mu$ increases, we see an improvement in the observed error, thus confirming the theoretical analysis. Indeed, as we send the value of $\mu$ towards infinity, the observed error converges to the observed error of the {direct-replacement algorithm} to machine precision

When enough Fourier modes are observed we see the expected behavior for both the synchronization and nudging schemes. That is, we see in \cref{fig:det:obs:resolved} that all methods exhibit exponential convergence in time to the reference solution in both the observed and unobserved errors. Moreover, in \cref{fig:initialized zero} we see that this same behaviour occurs when the initial data for the nudging scheme is chosen to be something other than the reference solution, in this case we use zero initial data. We see in \cref{fig:initialized zero} that when we initialize the nudged equations with something different than the observations at the initial time, {the effect of nudging on the observed error is felt during the initial period, where the value of $\mu$ is found to determine the initial rate of rapid convergence, as well as the error level at which the error transitions from the initial rate to a slower but still exponential rate of decay.  We note that this initial rapid convergence at exponential rate $\approx \mu$ is expected, due to \cref{lem:y} and \cref{thm:nudge:converge}. In particular \cref{lem:y} guarantees initially rapid low mode convergence $$\norm{P_M u(t) - P_M \tv(t)}_{L^2}^2 \leq e^{-2\mu t}\norm{P_M u_0 - P_M \tv_0}_{L^2}^2 + \frac{\nu^3}{\mu}C(\tv_0, u_0, T),$$ while \cref{thm:nudge:converge} guarantees slower convergence, but on an infinite time horizon $$\norm{P_M u(t) - P_M \tv(t)}_{L^2}^2 \leq e^{-\nu(t-T)} \norm{u(T) - \tv(T)}_{L^2}^2.$$ When $\mu$ is sufficiently large, these results combine in a straightforward way to achieve the bound $$\norm{P_M u(t) - P_M \tv(t)}_{L^2} \leq O(e^{-\mu t})+O(e^{-\frac{\nu}2(t-T)}),$$ which yields the behavior observed in the \cref{fig:initialized zero}.
}

We also investigated the complementary limit, as $\mu$ decreases to $0$, the results of which are presented in \cref{fig:zero limit}. We point out that in \cref{fig:zero limit}, the error plotted is not $\norm{u -\tv}_{L^2}$, as it is in all other plots. Instead the error here is $\norm{\tu - \tv}_{L^2}$, where $\tu$ is the 2D NSE solution that is initialized at $t_0$ with \textit{zero initial data}, {so that $\tv$ and $\tu$ are initialized identically. Note that because of this, one explicitly recovers $\tu$ when $\mu = 0$ (see \cref{thm:nudge:zero:limit})}. When $\mu \neq 0$ we can see that the error in both observed and unobserved modes increases at each fixed time as one increases the value of $\mu$, again, consistent with expectation.

\begin{figure}
    \begin{subfigure}[b]{.45\textwidth}
\centering

        % \begin{minipage}
            \includegraphics[width=\textwidth,height=5cm]{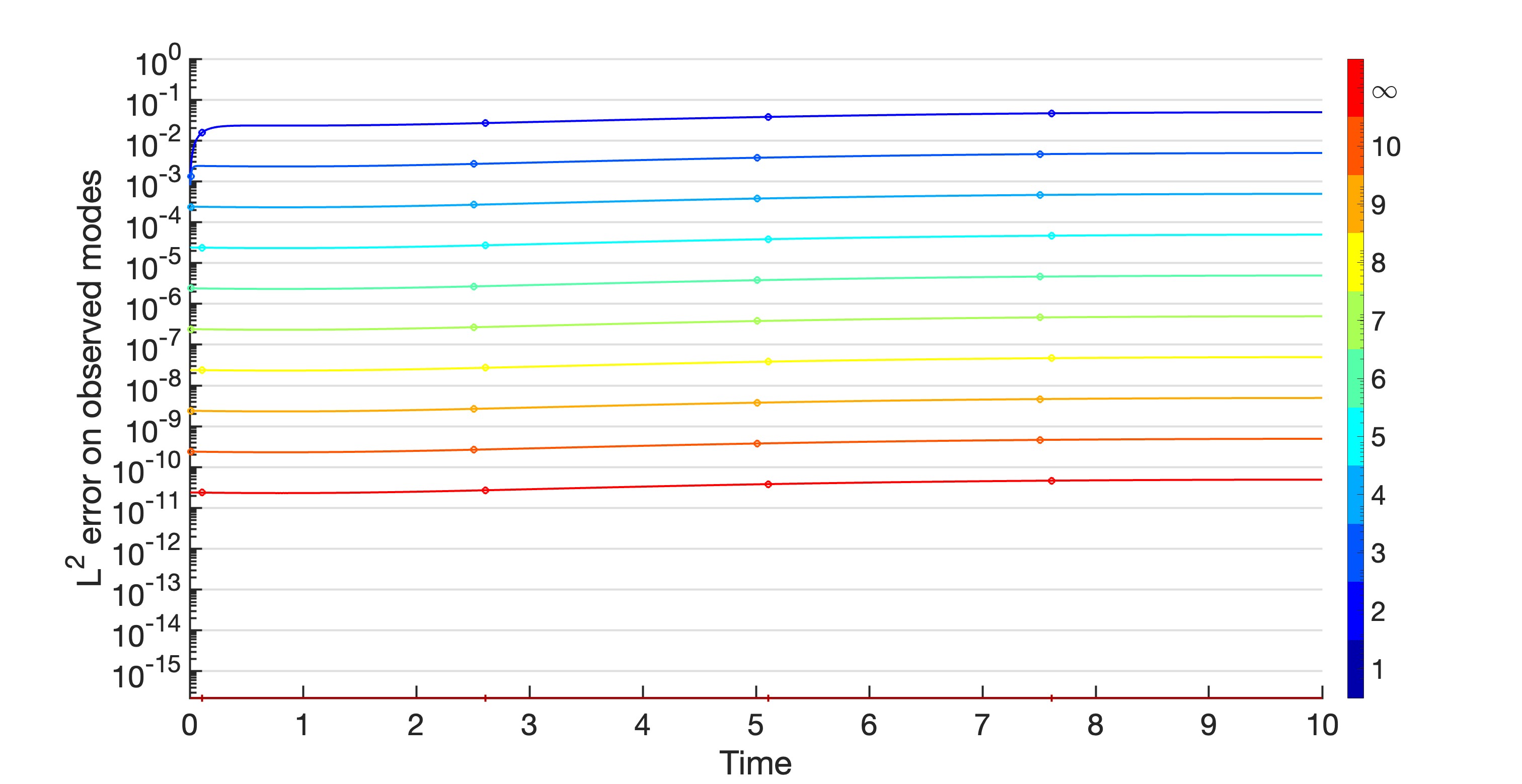}

    % \end{minipage}
    % \caption{Caption}
    \end{subfigure}
    \begin{subfigure}[b]{.45\textwidth}
\centering

        % \begin{minipage}
            \includegraphics[width=\textwidth,height=5cm]{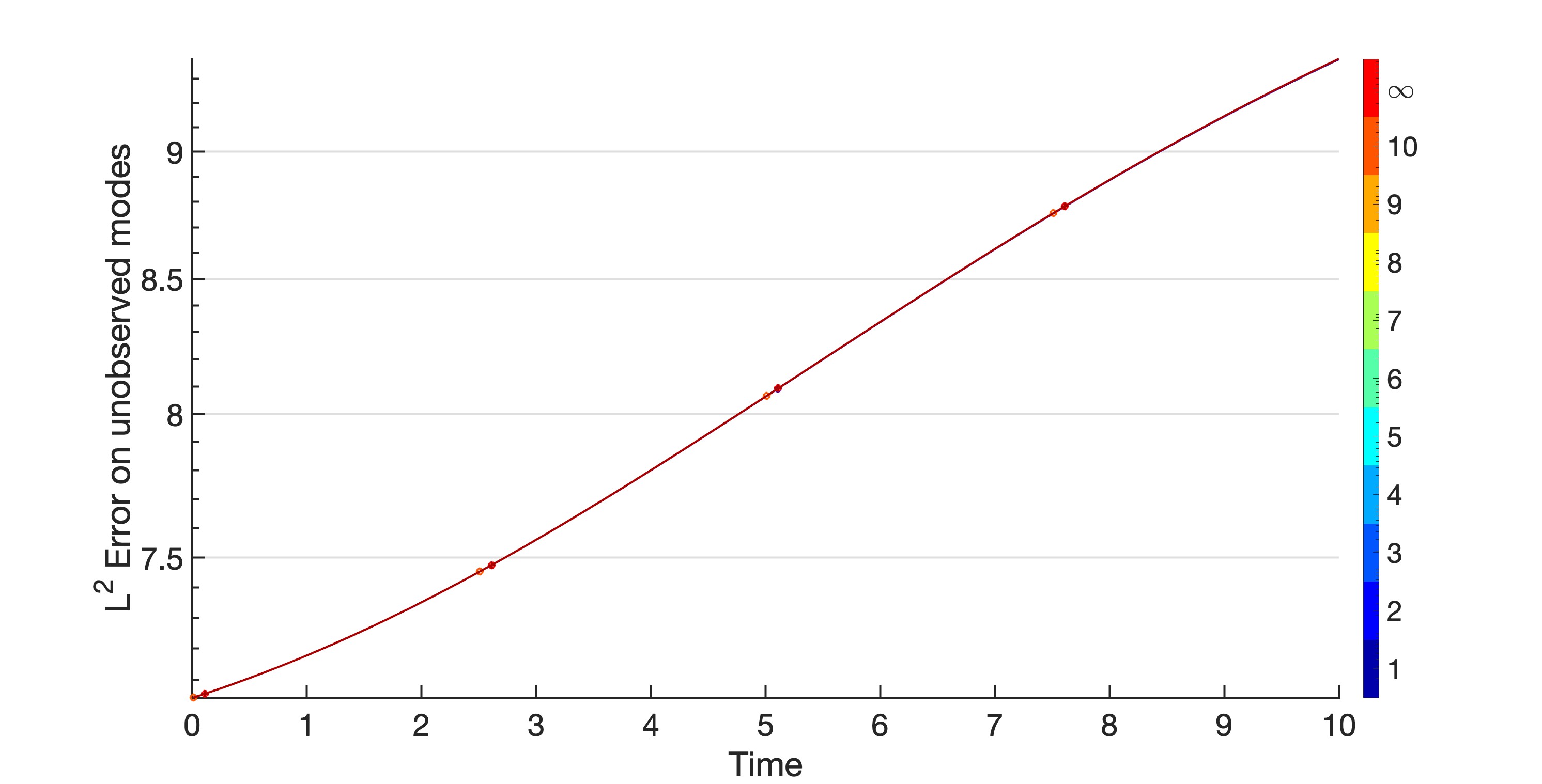}

    % \end{minipage}
    % \caption{Caption}
    \end{subfigure}
%     \begin{subfigure}[b]{.32\textwidth}
% \centering

%         % \begin{minipage}
%             \includegraphics[width=\textwidth,height=5cm]{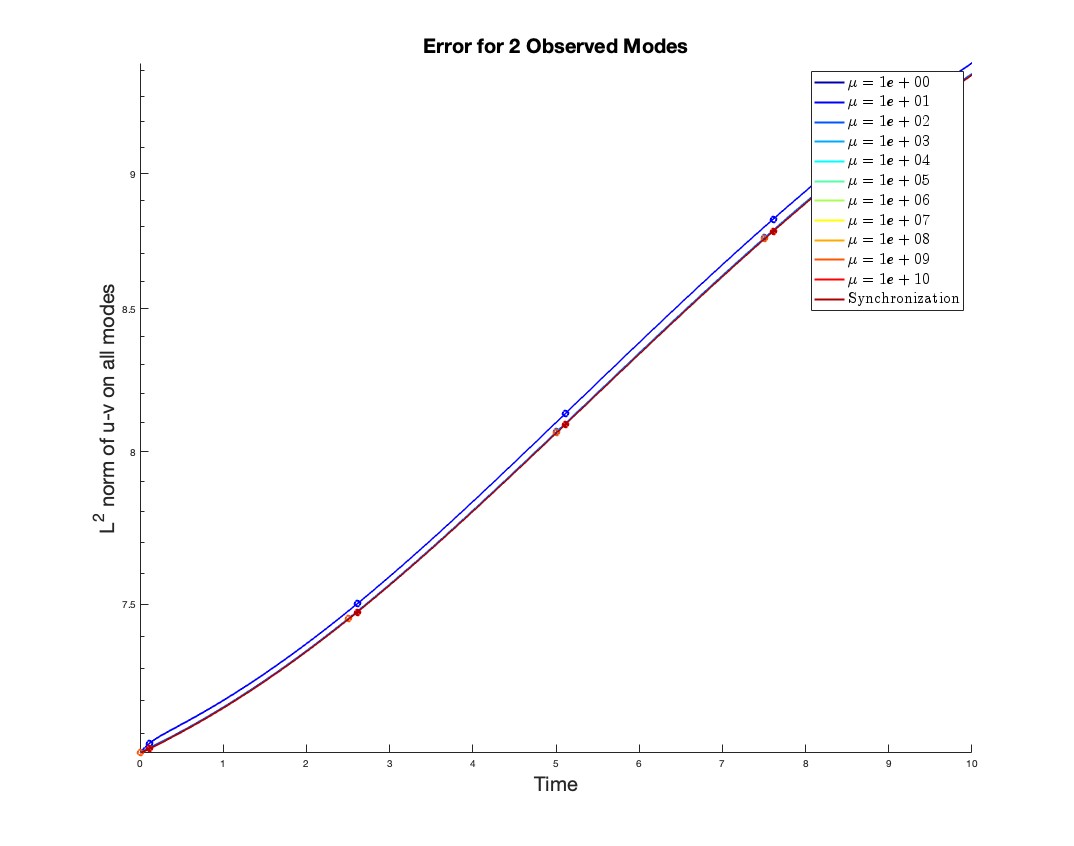}

%     % \end{minipage}
%     % \caption{Caption}
%     \end{subfigure}
    \caption{\small Error over time between reference and nudging solutions for different $\mu$ values when lowest 2 modes are observed. Plotted errors probe infinite-$\mu$ limit and display low mode error (left) and high mode error (right) in $L^2$ norm. {Note that direct-replacement algorithm} ($\mu=\infty$) achieves smallest total error among all values of $\mu$.  Coloring corresponds to values $\mu = 10^k$, with $k$ indicated by the color bar; $k= \infty$ and $k= -\infty$ correspond to the direct-replacement and zero-nudging regime, respectively.}
    \label{fig:det:obs:unresolved}
\end{figure}

% \begin{figure}
%     \centering
%     \includegraphics[width=\textwidth,height=5cm]{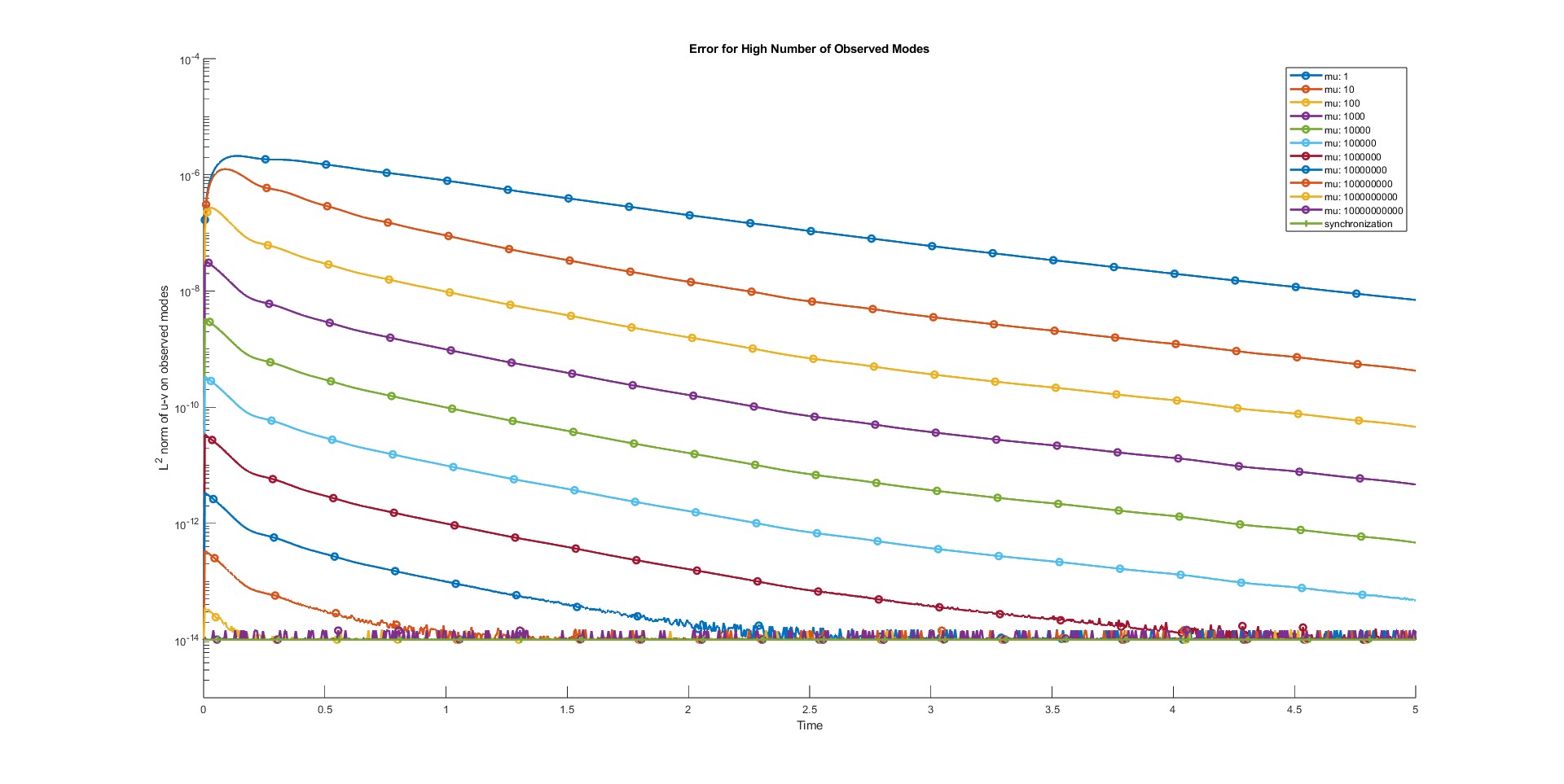}
%     \caption{Error over time for lowest 100 modes observed.}
%     \label{fig:det:obs:resolved}
% \end{figure}

\begin{figure}
    \begin{subfigure}[b]{.45\textwidth}
\centering

        % \begin{minipage}
            \includegraphics[width=\textwidth,height=5cm]{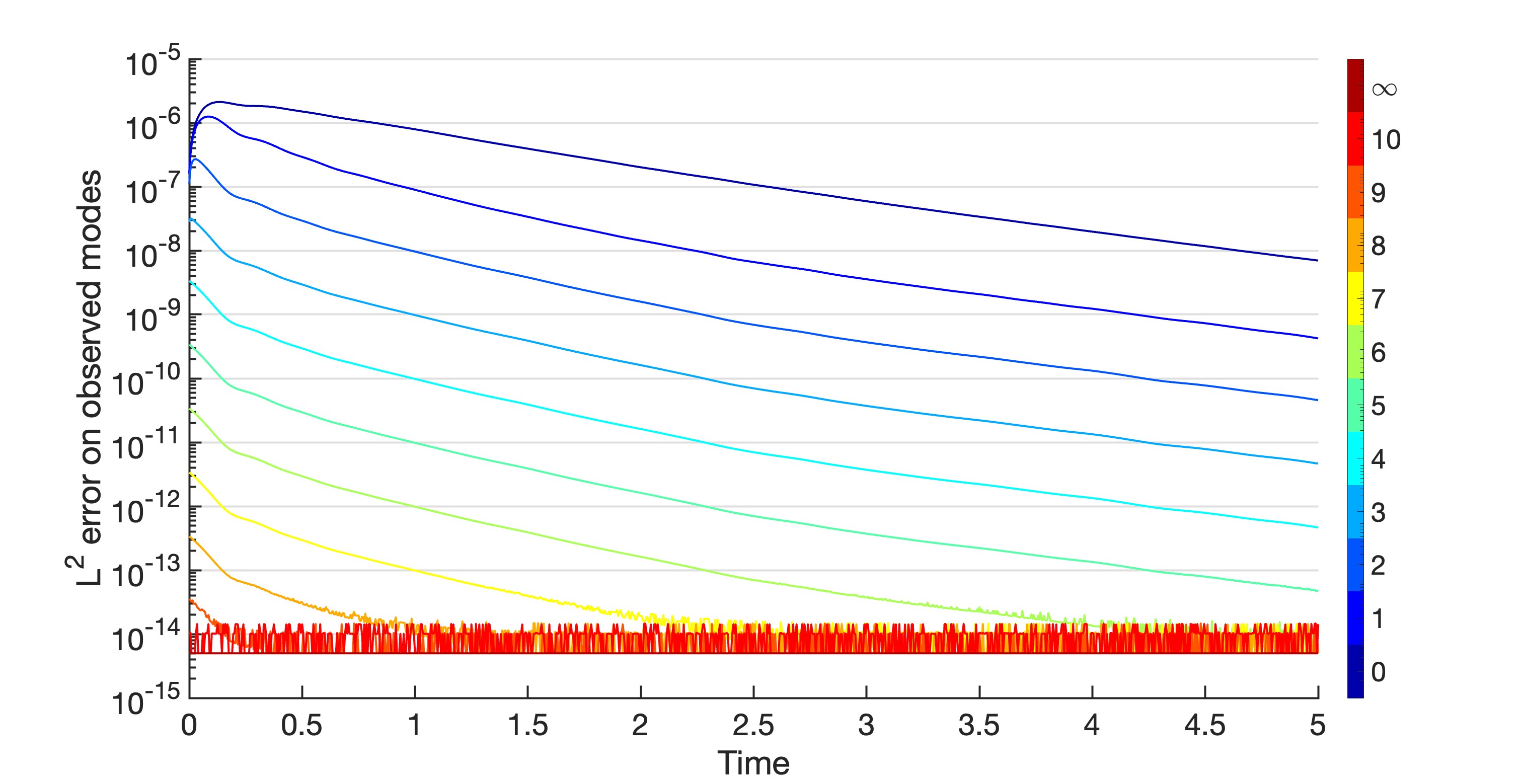}

    % \end{minipage}
    % \caption{Caption}
    \end{subfigure}
    \begin{subfigure}[b]{.45\textwidth}
\centering

        % \begin{minipage}
            \includegraphics[width=\textwidth,height=5cm]{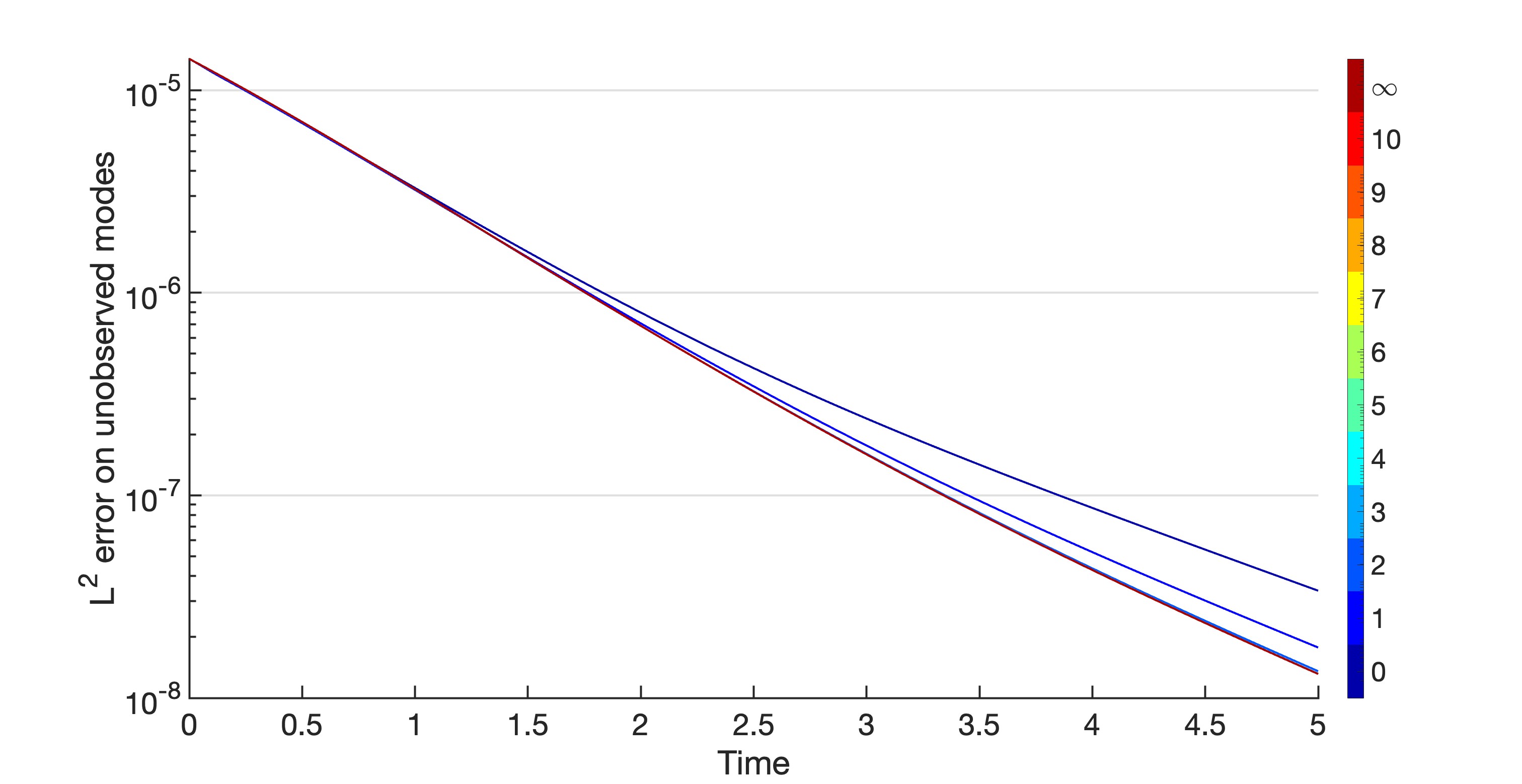}

    % \end{minipage}
    % \caption{Caption}
    \end{subfigure}
%     \begin{subfigure}[b]{.32\textwidth}
% \centering

%         % \begin{minipage}
%             \includegraphics[width=\textwidth,height=5cm]{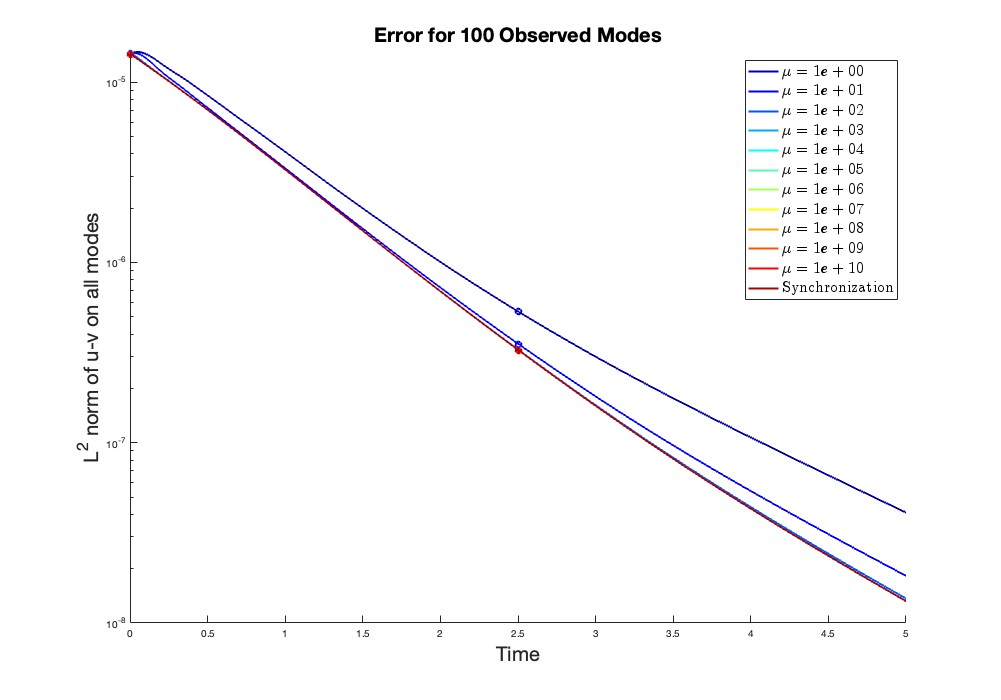}

%     % \end{minipage}
%     % \caption{Caption}
%     \end{subfigure}

    \caption{\small Error over time between reference and nudging solutions for different $\mu$ values when lowest 100 modes are observed. Nudging algorithm was initialized with low modes of the observed reference solution. Plotted errors probe infinite-$\mu$ limit and display low mode error (left) and high mode error (right) in $L^2$.  Coloring corresponds to values $\mu = 10^k$, with $k$ indicated by the color bar; $k= \infty$ and $k= -\infty$ correspond to the direct-replacement and zero-nudging regime, respectively.}
    \label{fig:det:obs:resolved}
\end{figure}

\begin{figure}
    \begin{subfigure}[b]{.45\textwidth}
\centering

        % \begin{minipage}
        % \missingfigure[\width=\textwidth]
            \includegraphics[width=\textwidth,height=5cm]{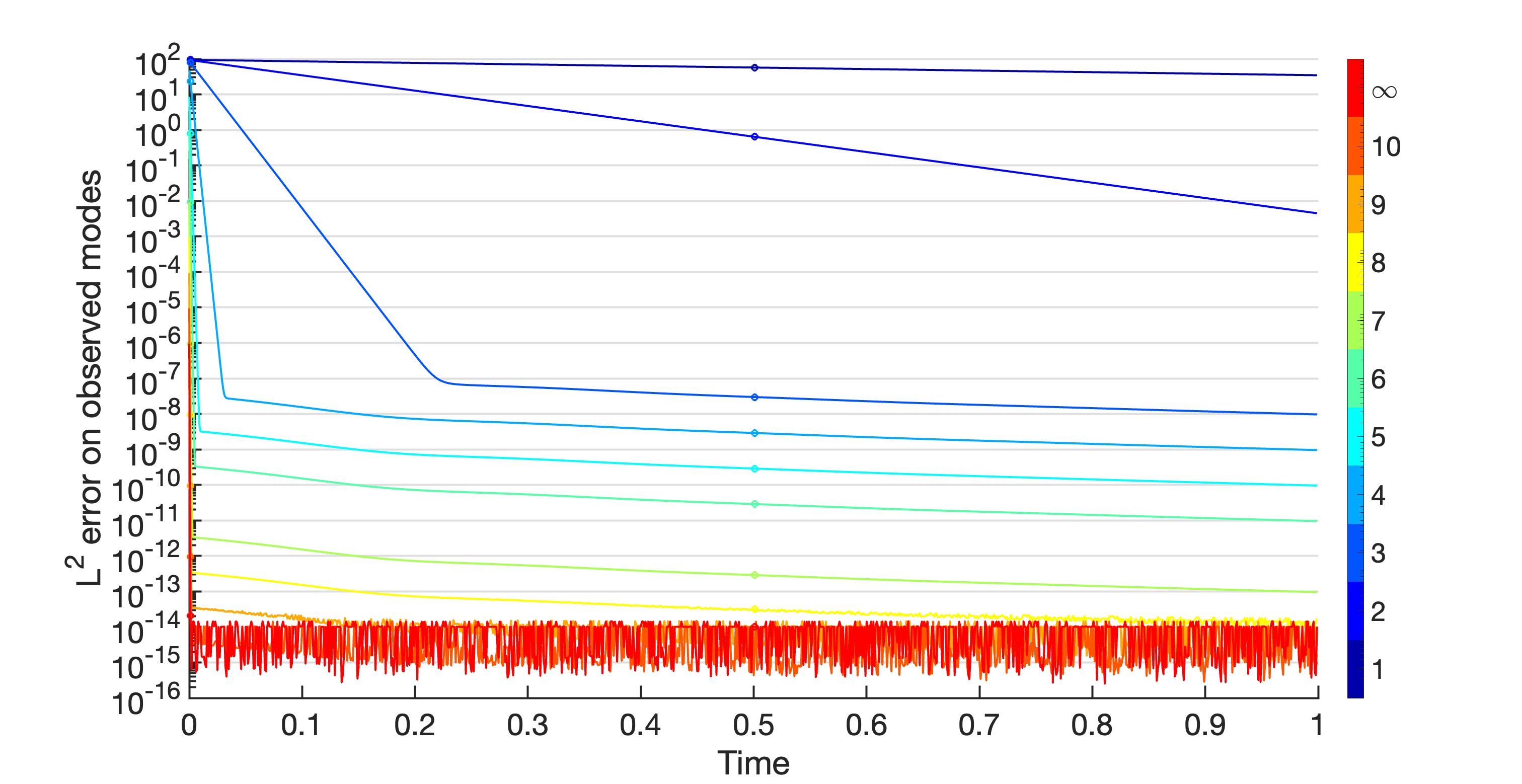}

    % \end{minipage}
    % \caption{Caption}
    \end{subfigure}
    \begin{subfigure}[b]{.45\textwidth}
\centering

        % \begin{minipage}
                % \missingfigure[\width=\textwidth]
            \includegraphics[width=\textwidth,height=5cm]{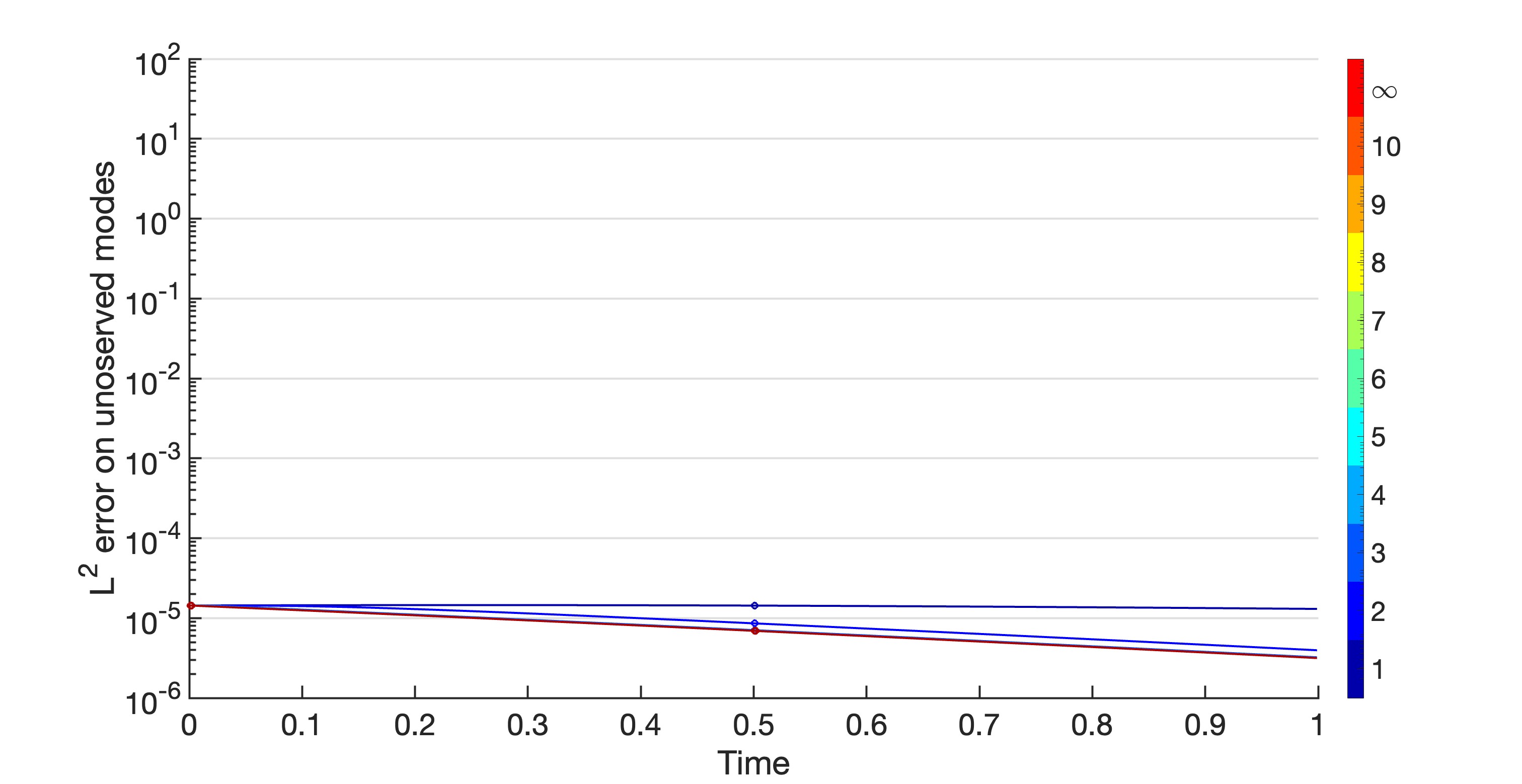}

    % \end{minipage}
    % \caption{Caption}
    \end{subfigure}
%     \begin{subfigure}[b]{.32\textwidth}
% \centering

%         % \begin{minipage}
%                 % \missingfigure[\width=\textwidth]
%             \includegraphics[width=\textwidth,height=5cm]{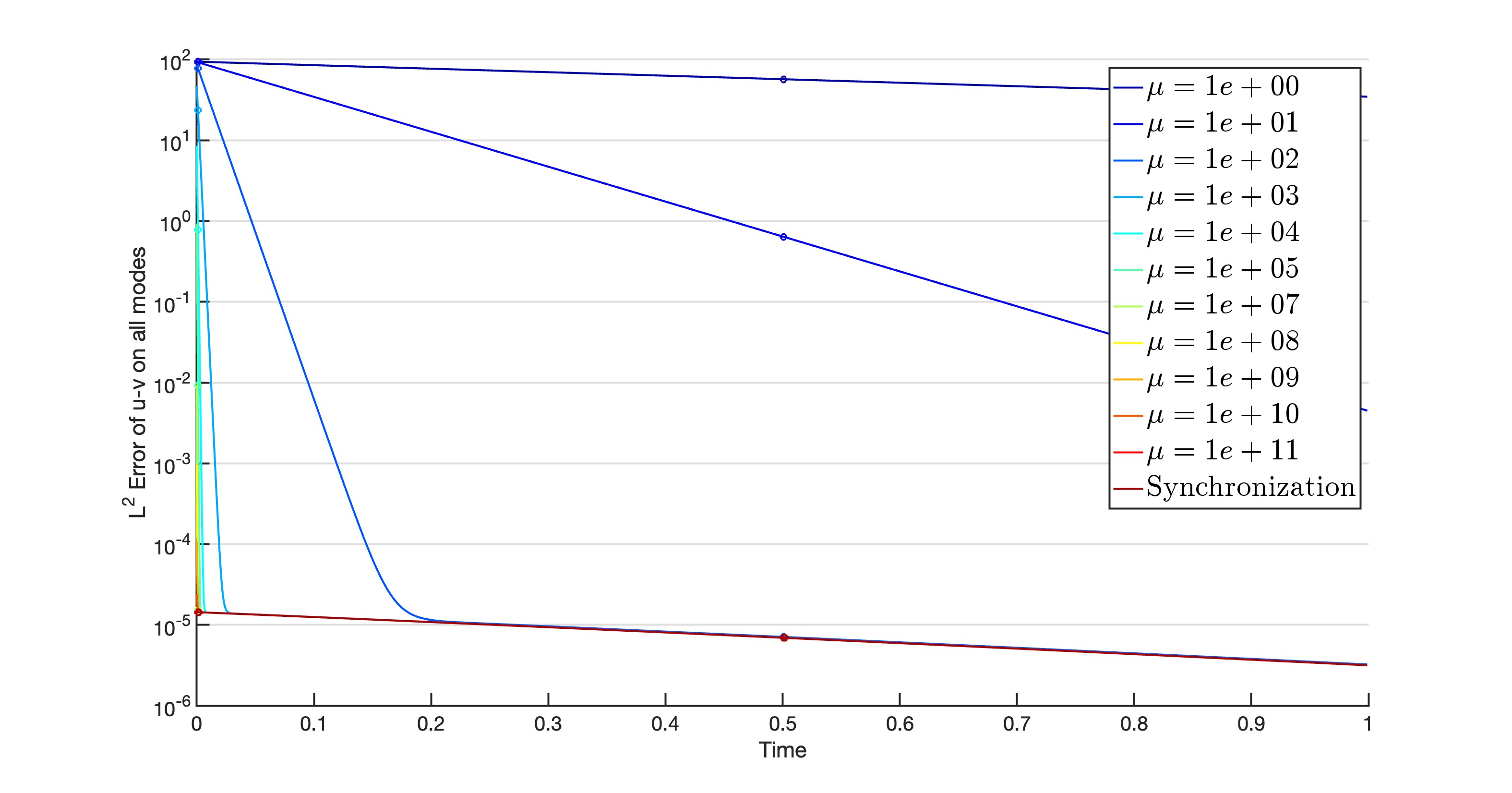}

%     % \end{minipage}
%     % \caption{Caption}
%     \end{subfigure}

    \caption{\small Error over time between reference and nudging solutions for different $\mu$ values assuming lowest 100 modes observed. Nudging algorithm was initialized with zero initial data. Plotted errors probe infinite-$\mu$ limit and display low mode error (left) and high mode error (right) in $L^2$ norm.  Coloring corresponds to values $\mu = 10^k$, with $k$ indicated by the color bar; $k= \infty$ and $k= -\infty$ correspond to the direct-replacement and zero-nudging regime, respectively.}
    \label{fig:initialized zero}
\end{figure}

\begin{figure}
    \begin{subfigure}[b]{.45\textwidth}
\centering

        % \begin{minipage}
        % \missingfigure[\width=\textwidth]
            \includegraphics[width=\textwidth,height=5cm]{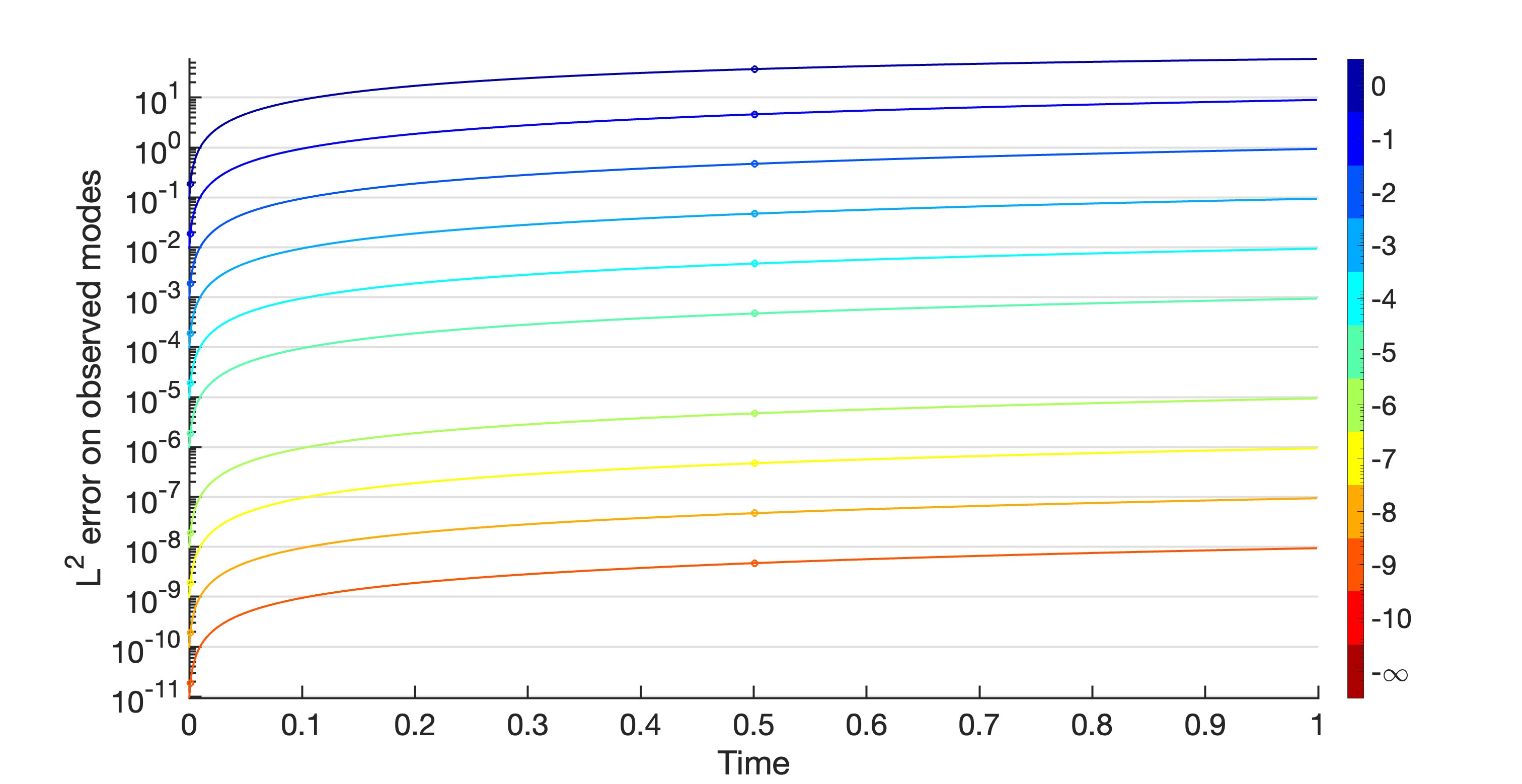}

    % \end{minipage}
    % \caption{Caption}
    \end{subfigure}
    \begin{subfigure}[b]{.45\textwidth}
\centering

        % \begin{minipage}
                % \missingfigure[\width=\textwidth]
            \includegraphics[width=\textwidth,height=5cm]{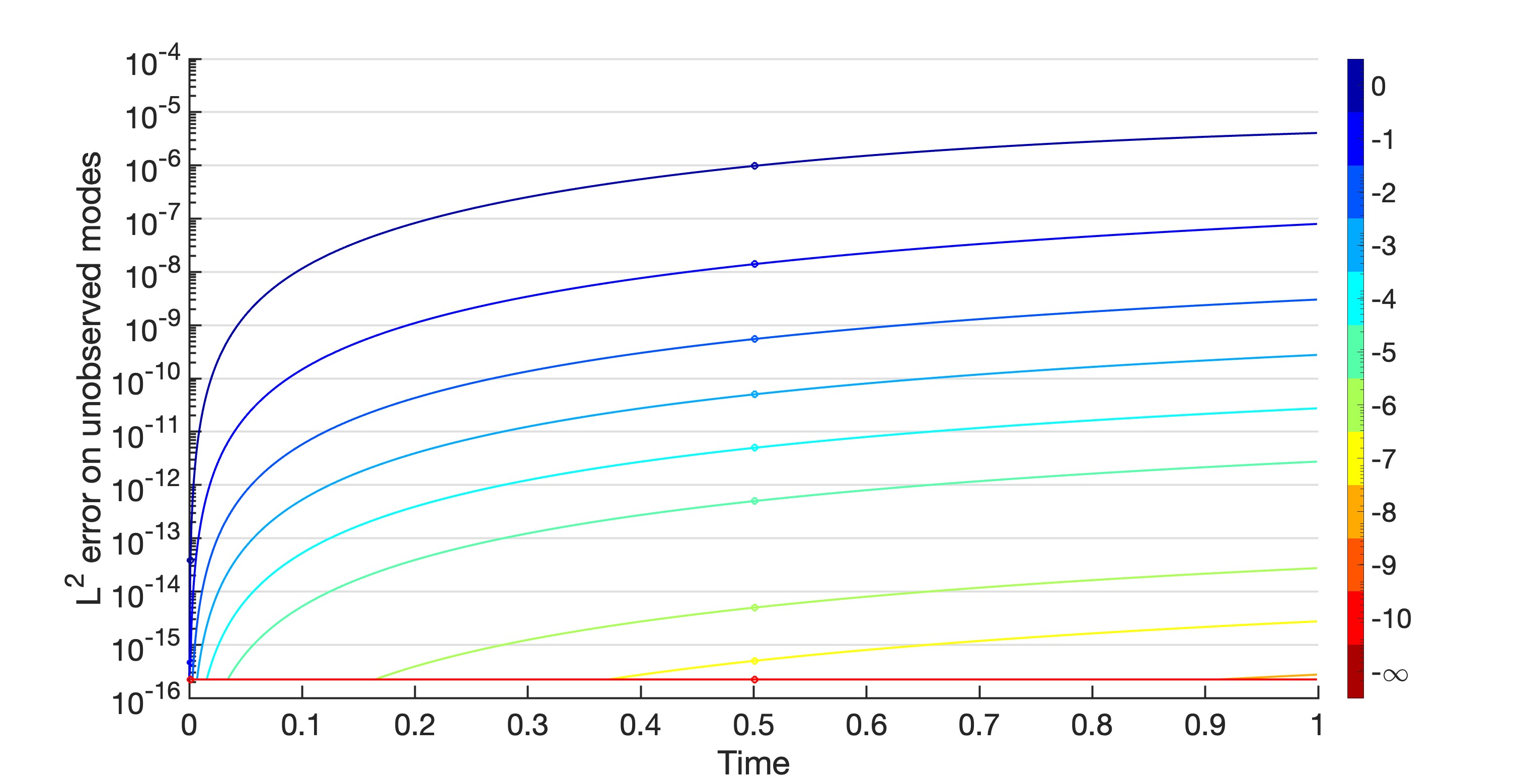}

    % \end{minipage}
    % \caption{Caption}
    \end{subfigure}
%     \begin{subfigure}[b]{.45\textwidth}
% \centering

%         % \begin{minipage}
%                 % \missingfigure[\width=\textwidth]
%             \includegraphics[width=\textwidth,height=5cm]{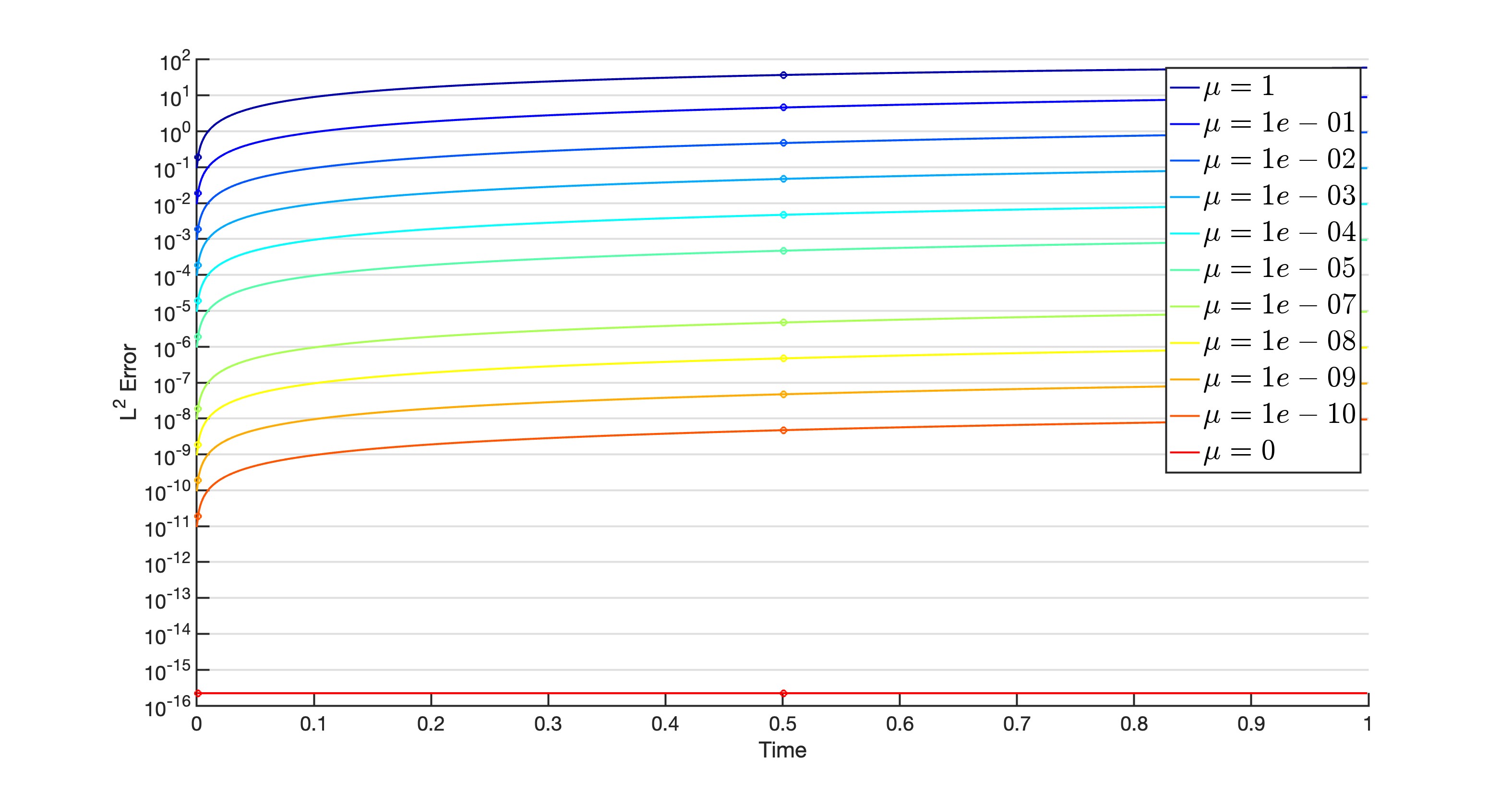}

%     % \end{minipage}
%     % \caption{Caption}
%     \end{subfigure}

    \caption{\small Error over time for different $\mu$ values assuming lowest 100 modes observed. Nudging algorithm was initialized with zero velocity. Plotted errors probe for zero-nudging limit and represent a splitting of $\norm{\tu - \tv}_{L^2}$, where $\tu$ is the solution of NSE with zero initial data, between low mode errors (left) and high mode errors (right).  Coloring corresponds to values $\mu = 10^k$, with $k$ indicated by the color bar; $k= \infty$ and $k= -\infty$ correspond to the direct-replacement and zero-nudging regime, respectively.}
    \label{fig:zero limit}
\end{figure}

\subsection{Convergence of Nudging to Synchronization - Stochastic Observations}
\label{sect:noisy_obs}

% \subsection{Deterministic Observations}

% \begin{figure}
%     \centering
%     \includegraphics[width=\textwidth,height=5cm]{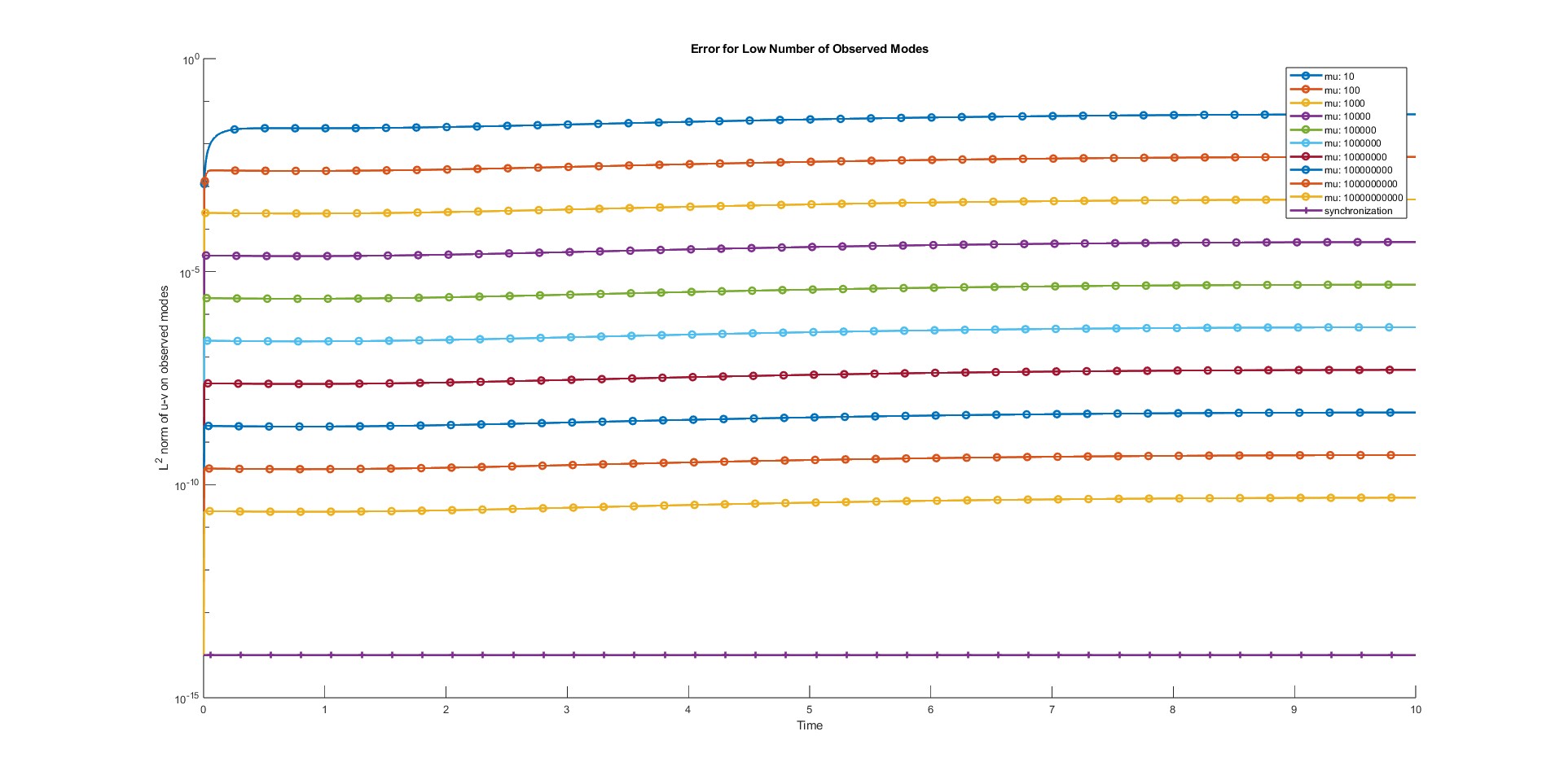}
%     \caption{Error over time for lowest 2 modes observed. }
%     \label{fig:det obs unresolved}
% \end{figure}

% \subsection{Noisy Observations}\label{sect:noisy_obs}

In the comparison of these data assimilation schemes, a vital test is to see how capable they are of handling imperfections in the observational data. To assess this, we utilized observational data polluted with Gaussian white noise.

The modified observational data $u_{obs}$, is formulated as follows:
    \begin{align}\notag
        u_{obs}(t) := P_N u(t) + \eta(t),
    \end{align}
where $\eta$ represents the observational noise injected at each timestep. This noise $\eta$ is constructed as a matrix of random complex coefficients, each corresponding to lower Fourier mode frequencies: 
\begin{align}\label{eq:eta}
    \eta(t) = \sum_{\abs{\vec{k}}=1}^N \hat{\eta}_{\vec{k}}(t) e^{i\pi \vec{k}}.
\end{align}
At each timestep, the coefficients $\hat{\eta}_{\vec{k}}$ are randomly generated as Gaussian white noise variables, each with a standard deviation of $\sigma = 0.1$. It is important to note that $\hat{\eta}_{\vec{k}}$ are complex numbers, having both real and imaginary components generated such that  $\Re\eta, \Im\eta \sim N(0, 0.01)$.

In generating $\eta$ as Gaussian white noise in Fourier space, we took additional steps to ensure the conjugate symmetry of the noise matrix. This is critical for obtaining real coefficients upon employing the inverse Fourier transformation in our numerics. { In particular, we enforce the condition $\hat{\eta}_{\vec{k}} = \overline{\hat{\eta}_{-\vec{k}}}$ for all indices $\vec{k}\in \mathbb{Z}^2$. }
%mirrored version. 
% This was done by mirroring the noise matrix about its center using a combination of rotation and complex conjugation. These operations ensure the Hermitian symmetry of the noise matrix, which ultimately guarantees real values in the spatial domain after applying the inverse Fourier transform.

\begin{algorithm}
\caption{Adaptive $\mu$ Scheme}
\label{alg:Adaptive mu}
\begin{algorithmic}[1] % The number tells where the line numbering should start
\Require $\tilde{u}$, the observations of the reference solution, $u$, polluted by observational noise.
\Require $N$, the number of Fourier modes observed by the interpolant, $P_N$.
\Require $tol$, a tolerance level determining when apply adaptive scheme.
\State $\mu = 1e+5$ \%\% Initialize with large $\mu$
\State $update\_counter = 0$ \%\% Initialize counter to delay slope (exponential decay rate) calculation
\For{$t_i = t_0:\Delta t: T$}
\State $v(t_i) = M(v, P_N(\tilde{u}(t_i), \mu)$ \%\% Evolve DA scheme forward.
\State $E_{obs}(t_i) = \norm{P_N(\tilde{u}(t_i) - v(t_i))}_L^2$
\State $update\_counter = update\_counter+1$
    \If{$update\_counter > 5$} 
        \State $slope = \frac{\log(E_{obs}(t_{i-5})) - \log(E_{obs}(t_i))}{5\Delta t}$
        \If{$slope > tol$}
            \State        $\mu = \mu / 10$
            \State $update\_counter = 0$
        \EndIf
    \EndIf
\EndFor
\end{algorithmic}
\end{algorithm}

Since value of $\mu$ dictates the convergence levels achieved in the {nudging algorithm}, this led us to investigate the effect of using an adaptive value of $\mu$. As one can see in \cref{fig:noisy obs}, the error appears to converge to a static level determined at least in part by $\mu$. However, one can notice that while the resulting convergence for large $\mu$ is overall worse than for small $\mu$ values, the convergence at initial times is noticeably better (see \cref{fig:noisy obs - single}). It appears the value of $\mu$ corresponds to a static error level, yet large values of $\mu$ still correspond to faster convergence to the fixed error level.  Thus, we utilized an adaptive $\mu$ scheme in order to capitalize on both the fast initial convergence of large $\mu$'s and the better overall convergence obtained for smaller $\mu$ values.

\begin{Rmk}
We point out that although $\mu$ drives synchronization, it simultaneously amplifies observational noise, thus leading to substantial loss in precision when $\mu$ is taken too large. This phenomenon was quantified {for the {nudging algorithm} in the presence of observational noise} in the theoretical work {\cite[Theorem 4.1]{BessaihOlsonTiti2015}, where it was} essentially found that the expected error should grow no more than $\mu|\sigma|^2$, {where $\sigma$ represents the variance of the observational noise.} Within our numerical setup, we see that the constant-$\mu$ strategy saturates the analytical error bounds established in {\cite[Theorem 4.1]{BessaihOlsonTiti2015}}. However, the adaptive-$\mu$ strategy proposed here appears to beat these bounds by {a full order of magnitude, which suggests that this gain may be closer in comparison to maximal the error bounds obtained in \cite[Theorem 4.3]{BlomkerLawStuartZygalakis2013} and 
 \cite[Theorem 5.3]{Biswas_Branicki_2024} for the 3DVAR algorithm some choice of covariance operator. See \cref{fig:noisy obs} and \cref{fig:noisy obs - single}. We emphasize that the adaptive-$\mu$ strategy introduced here, is \textit{agnostic} to the covariance structure of the noise, and determined entirely by the errors that are directly observable.} %Since the adaptive-$\mu$ strategy depends only on observable quantities, we view this result as being optimal.
\end{Rmk}

To dynamically adjust the parameter $\mu$ based on the evolution of errors, we opted for a relatively simple approach given in \cref{alg:Adaptive mu}. The main idea is to approximate the { exponential decay rate }of the error on the low modes using the error on the low modes from $5$ {preceding timesteps. We note that the choice of using the last $5$ timesteps is somewhat arbitrary, but {{}we} found it is useful enough for calculating the exponential decay rate} of the observed error. We see that the low-mode error tends to behave as follows:
    \begin{align}\notag
        \norm{P_Nu(t) - P_Nv(t)}_{L^2}^2 \sim \max\left\{ e^{-\kappa t}, C(\mu, \sigma)\right\}, 
    \end{align} 
where $\kappa>0$ is some decay constant, {$\sigma$ represents the variance of the observational noise, and} $C(\mu, \sigma)$ is a constant depending on $\mu$ and $\sigma$. This error tends to decay exponentially until it reaches a level of precision determined by $\mu$ and $\sigma$, after which {it fluctuates, but remains roughly constant}. In the adaptive algorithm, we therefore allow $\mu$ to be sensitive to the observed error and check whether it decays exponentially or remains roughly constant. If the error is roughly constant, then we decrease the value of $\mu$ in order to effect a decrease in the value of $C(\mu, \sigma_O)$. Ultimately, we found that the proposed adaptive $\mu$ scheme increases the overall precision in the long term while maintaining fast initial convergence levels seen with larger $\mu$ values.
We note that we could adjust this algorithm to allow for $\mu$ to increase, however we found this problematic as the value of $\mu$ inflates the observational error, leading to loss of precision if $\mu$ is ever increased in value.
{It is worth noting that this scheme can be readily adapted for a mode-dependent $\mu$ simply by having $\mu$ depend on each individual wave-number and calculating the observed error in \cref{alg:Adaptive mu} on each Fourier mode.}

\begin{Rmk}
In a recent work \cite{CibikFanLaytonSiddiqua2024}, adaptive $\mu$ schemes were also studied in the context of the 2D and 3D NSE. Two such schemes were proposed, the first of which (also called \textit{Algorithm 1}) is similar to the one considered in the present article, but with one notable difference: the scheme considered there allows for the value of $\mu$ to increase when errors have inflated in the next time step, whereas the scheme considered here does not. 

It is important to point out, however, that their tests are carried out in a regime where the {nudging algorithm} is not expected to synchronize with the reference solution. They observe that their adaptive scheme tends to increase $\mu$ in time. Since their observations are perfect, i.e., noise-free, the behavior of their adaptive scheme is consistent with the fact that the {direct-replacement algorithm} should perform the best since the dynamics on the low-modes are exact. This is verified by the results presented in \cref{fig:det:obs:unresolved}.
\end{Rmk}

% \begin{figure}
% \begin{algorithm}[H]
% \caption{Algorithm for Determining $\mu$}
% \KwData{Threshold value, initial value of $\mu$}
% \KwResult{Updated value of $\mu$, recorded switching times}
% \BlankLine
% $\mu = 10000$\;
% $\text{tol} = 0.1$\;
% \While{simulation is running}{
%     Prev_error = Error from 5 timesteps in the past\;
%     Current_error = Error from current timestep\;
%     Diff = log(current_error) - log(prev_error)\;

%     \If{Diff > tol}{
%     $\mu = \mu /10$\;
%     }
% }
% \end{algorithm}
%     % \centering
%     % \includegraphics{}
%     \caption{Caption}
%     \label{fig:enter-label}
% \end{figure}

\begin{figure}
    \begin{subfigure}[b]{.49\textwidth}
\centering

        % \begin{minipage}
            \includegraphics[width=\textwidth,height=5cm]{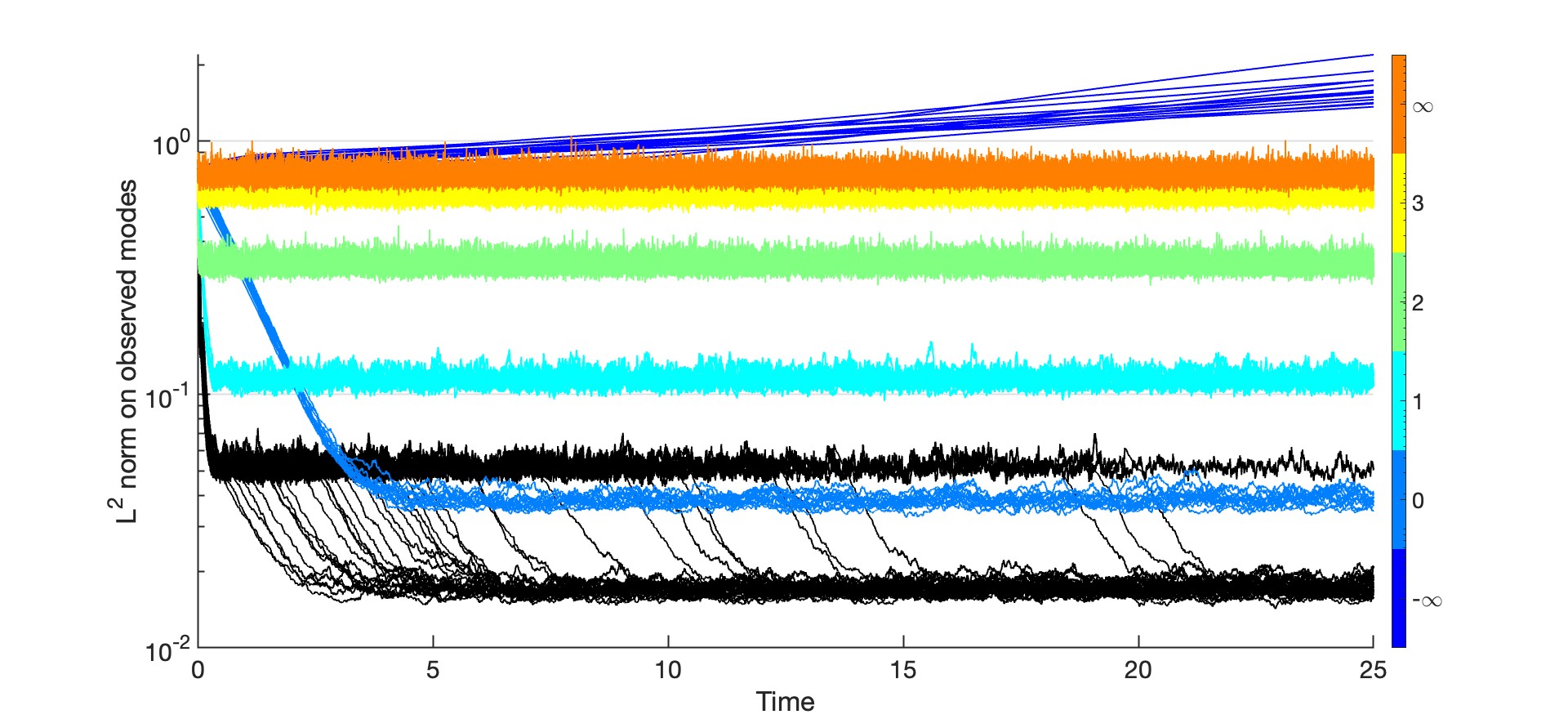}

    % \end{minipage}
    % \caption{Caption}
    \end{subfigure}
    \begin{subfigure}[b]{.49\textwidth}
\centering

        % \begin{minipage}
            \includegraphics[width=\textwidth,height=5cm]{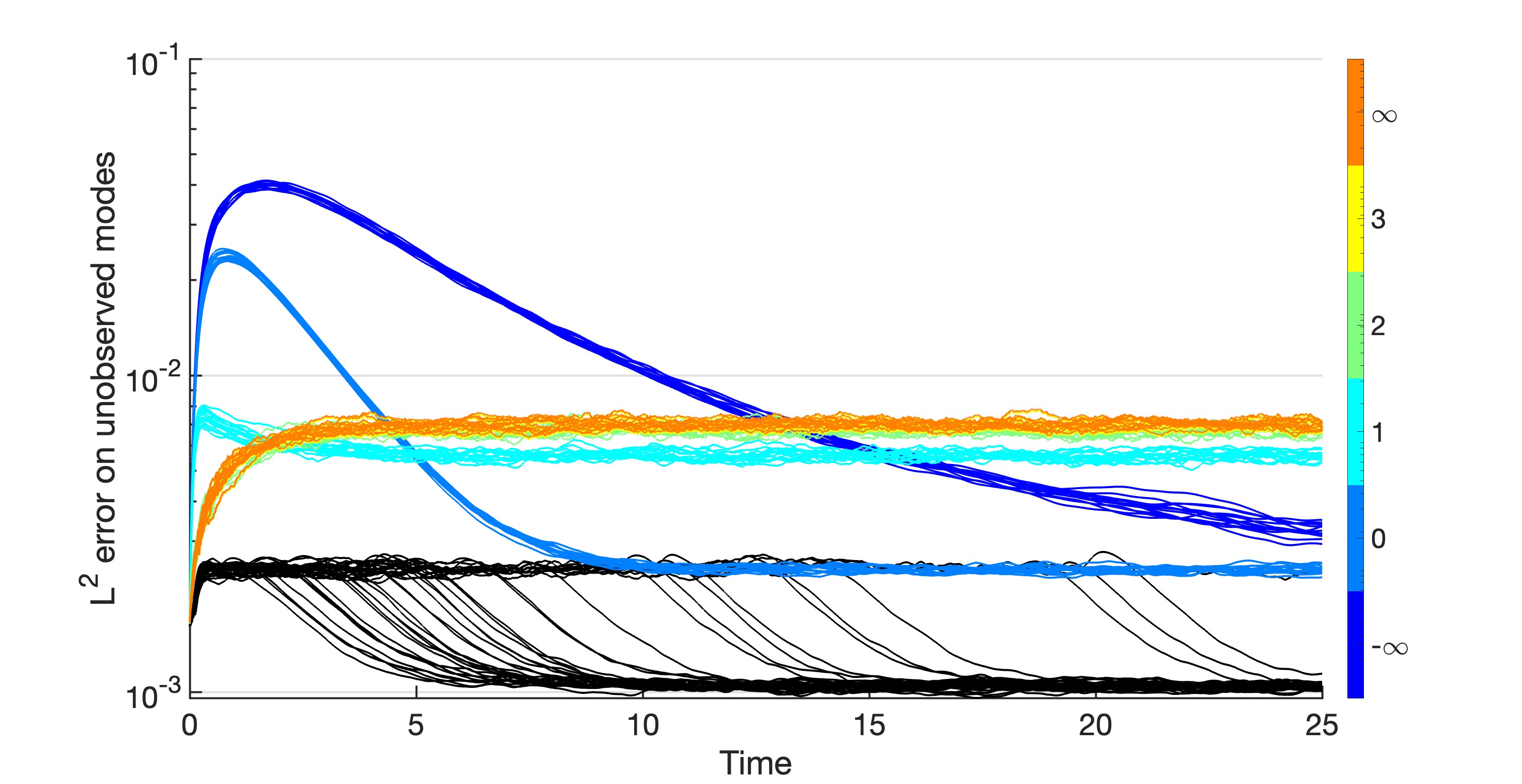}

    % \end{minipage}
    % \caption{Caption}
    \end{subfigure}
    
    \caption{\small Error over time with 30 realizations of noisy observations between reference and nudging solutions for different $\mu$ values. Plotted errors display low mode error (left) and high mode error (right) in $L^2$ norm.  Coloring corresponds to values $\mu = 10^k$, with $k$ indicated by the color bar; $k= \infty$ and $k= -\infty$ correspond to the direct-replacement and zero-nudging regime, respectively. Black lines correspond to the adaptive $\mu$ scheme defined by \cref{alg:Adaptive mu}.}
    \label{fig:noisy obs}
\end{figure}

\begin{figure}
\centering
    \begin{subfigure}[b]{.45\textwidth}
        % \begin{minipage}
            \includegraphics[width=\textwidth,height=5cm]{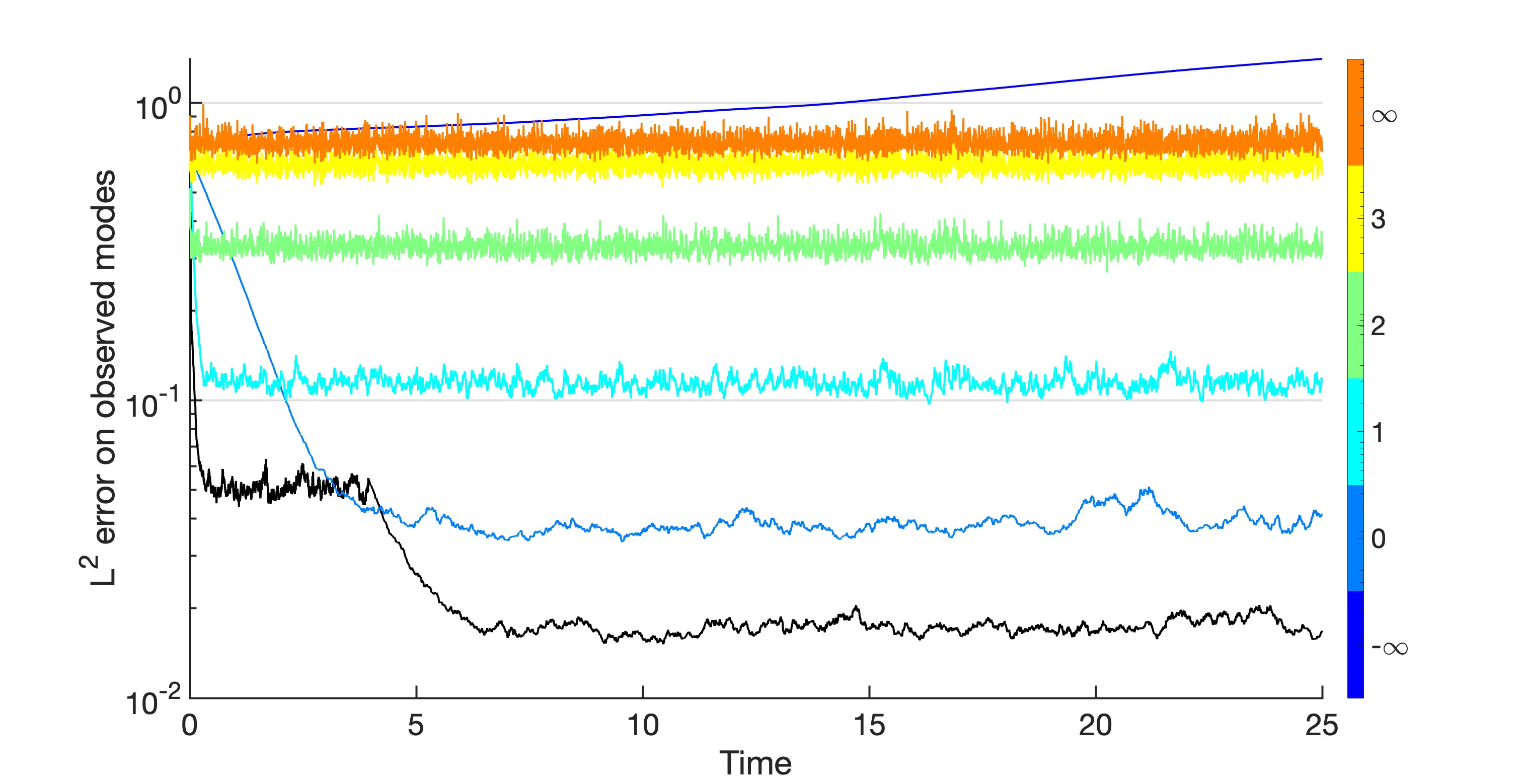}

    % \end{minipage}
    \end{subfigure}
    \begin{subfigure}[b]{.45\textwidth}
\centering

        % \begin{minipage}
            \includegraphics[width=\textwidth,height=5cm]{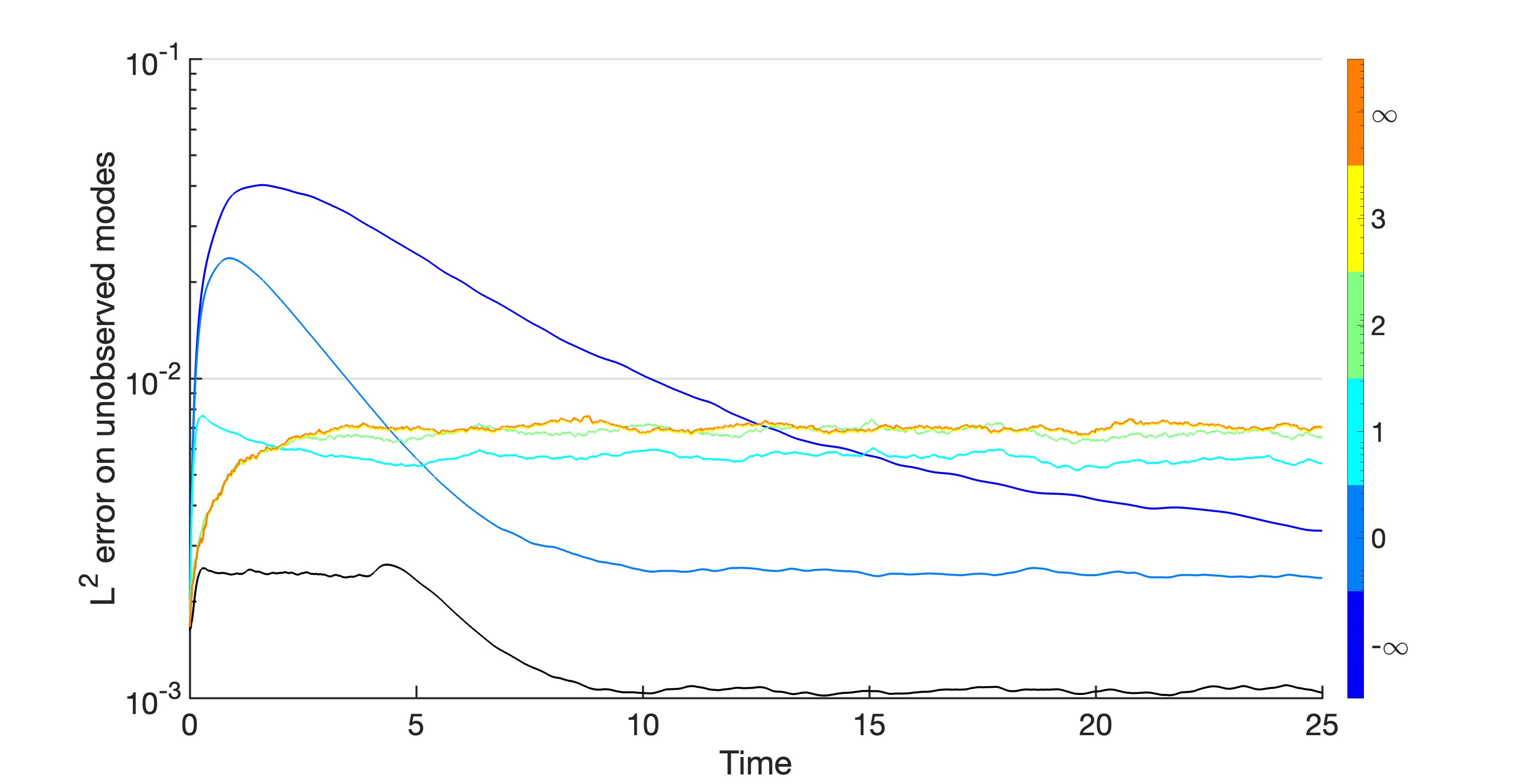}

    % \end{minipage}
    % \caption{}
    \end{subfigure}

        \begin{subfigure}[b]{.45\textwidth}
\centering

        % \begin{minipage}
            \includegraphics[width=\textwidth,height=5cm]{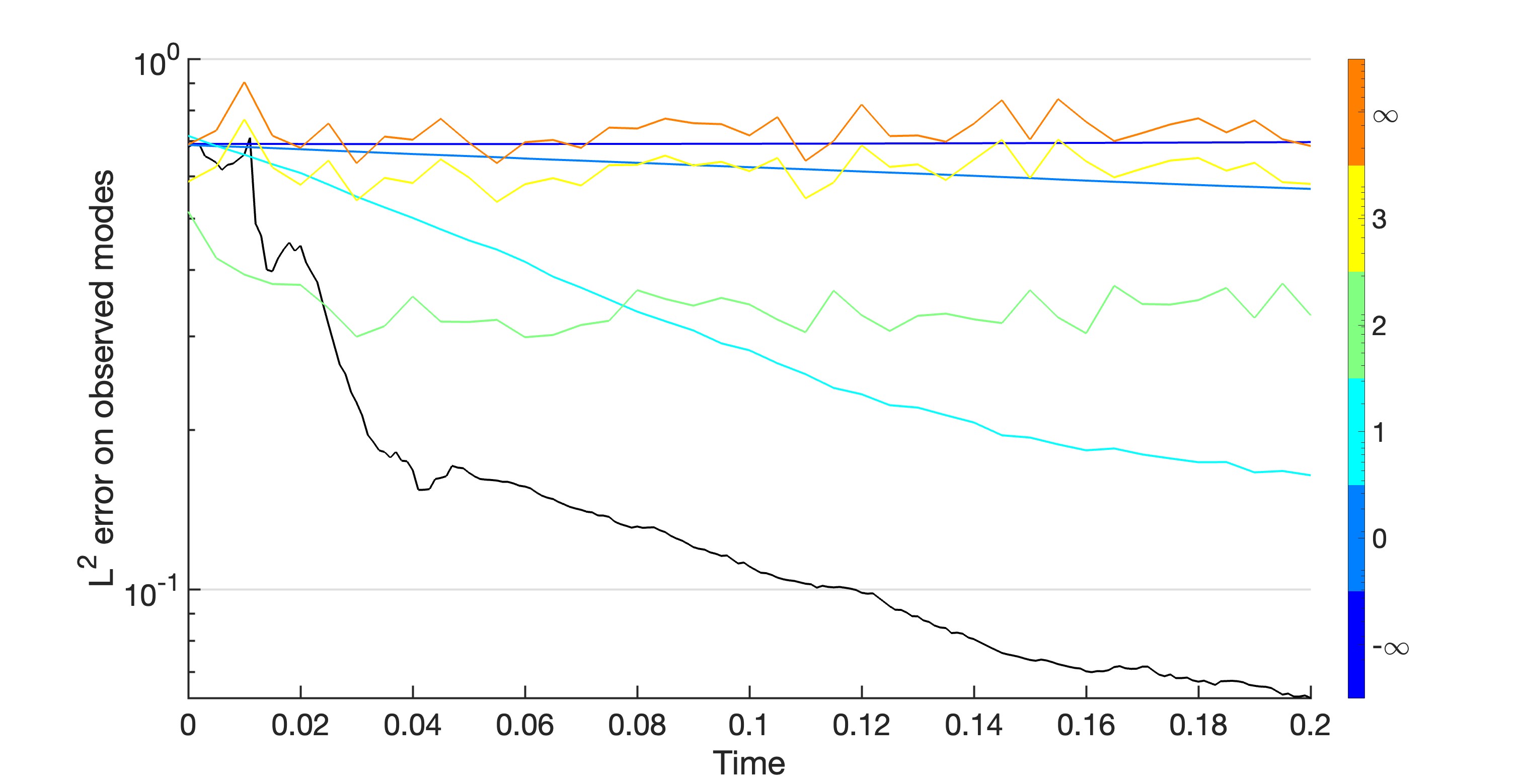}

    % \end{minipage}
    % \caption{}
    \end{subfigure}
    \begin{subfigure}[b]{.45\textwidth}
\centering

        % \begin{minipage}
            \includegraphics[width=\textwidth,height=5cm]{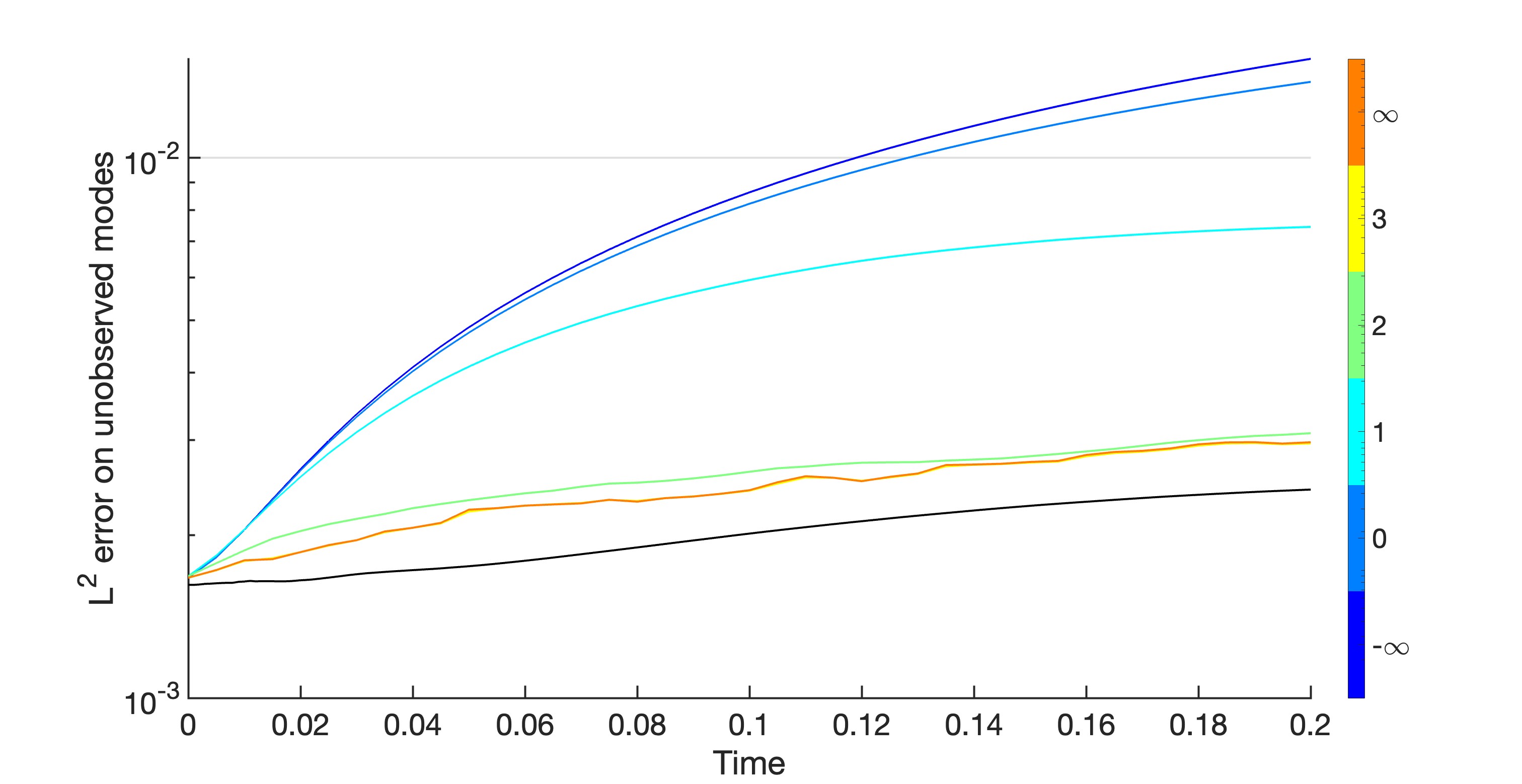}
        % \caption{}

    \end{subfigure}
    \caption{\small 
    Error plots of a single realization from trials shown in \cref{fig:noisy obs}.
    Plotted errors display low mode error (first column) and high mode error (second column) in $L^2$ norm; top row shows global-in-time error; bottom row shows initial transient of error. Coloring corresponds to values $\mu = 10^k$, with $k$ indicated by the color bar; $k= \infty$ and $k= -\infty$ correspond to the direct-replacement and zero-nudging regime, respectively. The black line corresponds to the adaptive $\mu$ scheme defined by \cref{alg:Adaptive mu}.
    }
    \label{fig:noisy obs - single}
    \end{figure}

% \begin{figure}

%     \caption{\small Error between reference and nudging solutions over time for different $\mu$ values for a single trial with observations polluted with measurement error. Errors plotted here are low mode error (left) and high mode error (right), as measured by the $L^2$ norm. Plots are zoomed in to feature early time development of the error.  Coloring corresponds to values $\mu = 10^k$, with $k$ indicated by the color bar; $k= \infty$ and $k= -\infty$ correspond to the direct-replacement and zero-nudging regime, respectively. The black line correspond to an adpative $\mu$ scheme given by \cref{alg:Adaptive mu}.}
%     \label{fig:noisy obs - zoomed}
%     \end{figure}

\FloatBarrier

\subsection*{Acknowledgments} {{}First and foremost, the authors would like to thank the anonymous referees for their generous input and careful reading of the manuscript. Their comments and suggestions gave way to considerable improvements in the article and ultimately to its present state. We are indebted to their time and efforts.} The authors would also like to thank Andrew Stuart and Edriss Titi for their encouragement and for stimulating discussions related to this work.  E.C. was supported in part by the Department of Defense Vannevar Bush Faculty Fellowship, under
ONR award N00014-22-1-2790.  A.F. was supported in part by the National Science Foundation through DMS 2206493. V.R.M. was in part supported by the National Science Foundation through DMS 2213363, DMS 2206491, and DMS 2511403 as well as the Dolciani Halloran Foundation.

\appendix
\section{{The Zero-Nudging Limit: Convergence to the Bjerknes Filter}}\label{sect:limit:zero}
Let $f\in L^\infty(0,\infty;H)$. For $u_0,\tv_0\in V$, and let $u, \tu$ denote the unique global-in-time strong solutions of \eqref{eq:nse:ff} corresponding to $u_0, \tv_0$, respectively, guaranteed by \cref{prop:nse:ball}. {We recall the convention established in the discussion immediately following \cref{thm:nudge:limit} and thus suppose that $u_0\in B_H(\rho_0)\cap B_V(\rho_1)$.} %As in \cref{sect:limit}, we will assume that the reference solution has evolved sufficiently far in time to satisfy the estimates \eqref{est:absorb:L2} at $t=0$, so that without loss of generality we may suppose $t_0=0$ in \eqref{est:absorb:L2} of \cref{prop:nse:ball}. 
Now, given $N>0$, we consider the following set-up:
    \begin{align}
        \frac{d\tv}{dt}+\nu A\tv+B(\tv,\tv)&=f-\mu P_N\tv+\mu P_Nu,\quad \tv(0)=\tv_0, \label{eq:nudge:zero:system:tv}
        \\
        \frac{d\tu}{dt}+\nu A\tu +B(\tu,\tu)&=f,\quad \tu(0)=\tv_0. \label{eq:nudge:zero:system:v}
    \end{align}

\begin{Thm}{\label{thm:nudge:zero:limit}}
Given any $T>0$, one has
    \begin{align}\notag
        \lim\limits_{\mu\goesto {0^+}}\sup\limits_{t\in[0,T]}|\tv(t;\tv_0)-\tu(t;\tv_0)|=0.
    \end{align} 
\end{Thm}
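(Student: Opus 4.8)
The plan is to set $w=\tv-\tu$ and to exploit that, in contrast to the infinite-nudging limit, the nudging term now enters the difference equation as a \emph{regular} $O(\mu)$ perturbation. Subtracting \eqref{eq:nudge:zero:system:v} from \eqref{eq:nudge:zero:system:tv} and writing $\tv=\tu+w$, one obtains, via \eqref{def:DB},
\begin{align}\notag
\frac{dw}{dt}+\nu Aw+B(w,w)+DB(\tu)w=\mu P_N(u-\tv),\quad w(0)=0,
\end{align}
where crucially $w(0)=0$ since both $\tv$ and $\tu$ are initialized at $\tv_0$. Since the right-hand side carries an explicit factor of $\mu$, it suffices to control the forcing uniformly in $\mu$ on $[0,T]$ and then close a stability estimate.

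First I would take the $H$-inner product of this equation with $w$. By the skew-symmetry property \eqref{eq:B:skew}, both $\lp B(w,w),w\rp$ and $\lp B(\tu,w),w\rp$ vanish, leaving only $\lp B(w,\tu),w\rp$ from the linearized term. Applying \eqref{est:B:ext} and Young's inequality gives $|\lp B(w,\tu),w\rp|\leq C_L\|w\|\|\tu\||w|\leq \nu\|w\|^2+\tfrac{C_L^2}{4\nu}\|\tu\|^2|w|^2$, and the $\nu\|w\|^2$ term is absorbed by the dissipation. For the forcing I would rewrite $P_N(u-\tv)=P_N(u-\tu)-P_Nw$, so that $\mu\lp P_N(u-\tv),w\rp=\mu\lp P_N(u-\tu),w\rp-\mu|P_Nw|^2\leq \mu\lp P_N(u-\tu),w\rp$, discarding the nonpositive term. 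The point of this rewriting is that $|u-\tu|$ is bounded uniformly in $\mu$ and $t$: the reference solution $u$ lies in the absorbing ball by hypothesis, while \eqref{est:abs:ball} applied to $\tu$ (a genuine solution of \eqref{eq:nse:ff}) gives $|\tu(t)|^2\leq \max\{|\tv_0|^2,\rho_0^2\}$. Hence $\mu|\lp P_N(u-\tu),w\rp|\leq \mu K|w|$ with $K$ depending only on $\rho_0$ and $|\tv_0|$, and in particular independent of $\mu$.

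Collecting these bounds yields a differential inequality of the form $\frac{d}{dt}|w|^2\leq \Lambda(t)|w|^2+\mu^2K^2$, where $\Lambda(t)=\tfrac{C_L^2}{2\nu}\|\tu\|^2+1$ is bounded on $[0,T]$ since $\tu\in C([0,T];V)$ (indeed $\sup_{[0,T]}\|\tu\|<\infty$ by \eqref{est:abs:ball}). Gr\"onwall's inequality, together with $w(0)=0$, then gives $\sup_{t\in[0,T]}|w(t)|^2\leq \mu^2 K^2\,\Lambda^{-1}e^{\Lambda T}$ for a constant $\Lambda=\sup_{[0,T]}\Lambda(t)$, which is $O(\mu^2)$ and therefore tends to $0$ as $\mu\goesto0$, proving the claim. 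The main point to emphasize is that there is no genuine obstacle here analogous to the infinite-nudging case: precisely because the perturbation is regular rather than singular, every a priori bound is $\mu$-independent and the result follows from a routine energy estimate, the only care being the uniform control of the forcing obtained above.
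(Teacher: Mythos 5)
Your argument is correct and follows essentially the same route as the paper: an energy estimate for the difference $\tv-\tu$ exploiting the vanishing initial datum, the bilinear estimate \eqref{est:B:ext} with Young's inequality, the absorbing-ball bounds \eqref{est:abs:ball}, and Gr\"onwall. The only substantive difference is that you bound the forcing $\mu P_N(u-\tu)$ directly by the triangle inequality and \eqref{est:abs:ball}, whereas the paper runs a separate NSE stability (Gr\"onwall) estimate for $u-\tu$; your shortcut is valid and even yields the slightly sharper rate $O(\mu^2)$ for $\sup_{[0,T]}|w|^2$ in place of the paper's $O(\mu)$.
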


\begin{proof}
Define $\tw = \tv-\tu$, and $w = u-\tu$. Then, $\tw$ satisfies the initial value problem 
    \begin{align}\label{eq:nudge:zero:tw}
        & \frac{d\tw}{dt} - \nu A \tw + B(\tw,\tw)+DB(\tu)\tw = - \mu P_N \tw + \mu P_N w, \qquad \tw(0)=0.
    \end{align} 
Upon taking the $H$--inner product of \eqref{eq:nudge:zero:tw} with $\tw$, we obtain
    \begin{align}\notag
        \frac{1}2\frac{d}{dt}|\tw|^2+\nu\|\tw\|^2+\mu|P_N\tw|^2=-\lp B(\tw,\tu),\tw\rp + \mu\lp P_N w, \tw\rp .
    \end{align}
    By \eqref{est:B:ext} and Young's inequality, we have
    \begin{align*}
        |\lp B(\tw,\tu),\tw\rp|&\leq C_L\|\tw\|\|\tu\||\tw|\leq \nu\|\tw\|^2+\frac{C_L^2}{4\nu}\|\tu\|^2|\tw|^2,
        \\
        \mu|\lp P_N w, \tw\rp|  &\leq \frac{\mu}{2}|P_N w|^2 + \frac{\mu}{2}|\tw|^2.
    \end{align*}
Thus, by \cref{prop:nse:ball} and \eqref{est:abs:ball}, and \eqref{est:Poincare}, we have
    %\begin{align*}\notag
     %   \frac{d}{dt}|\tw|^2+\nu|\tw|^2 + \mu|P_N\tw|^2\leq \nu\frac{C_L^2}{2}\left(\frac{\rho_1}{\nu}\right)^2|\tw|^2 + \mu |P_N w|^2, 
    %\end{align*}
%which can be reduced to 
        \begin{align}\notag
        \frac{d}{dt}|\tw|^2+ \nu\lp1 - \frac{C_L^2}{2}\left(\frac{\rho_1}{\nu}\right)^2 \rp |\tw|^2 \leq \mu |P_N w|^2. 
    \end{align}
By Gr\"onwall's inequality, and since $\tw(0) =0$, we therefore have 
    \begin{align}\notag
        \sup\limits_{t\in[0,T]}|\tw(t)|^2\leq \frac{\mu}{\nu}\left(\frac{\max\{1,e^{-\nu{c_0}T}\}}{|{c_0}|}\right) \sup\limits_{t\in[0,T]} |P_N w{(t)}|^2,
    \end{align}
 where ${c_0}= 1 - \frac{C_L^2}{2}\left(\frac{\rho_1}{\nu}\right)^2$. 
 
 Next, recall that $\tu$ is the solution of the 2D NSE \eqref{eq:nudge:zero:system:v}, while $u$ is the solution of the 2D NSE \eqref{eq:nse}. Thus, by the standard stability argument for the 2D NSE (similar to the argument above), we have 
 \begin{align}\notag
 \sup\limits_{t\in [0,T]}|w(t)|^2 \leq |w_0|^2 \max\{1,e^{-\nu{c_0}T}\}.
 \end{align}
 Thus, 
    \begin{align}\notag
        \sup\limits_{t\in[0,T]}|\tw(t)|^2\leq \frac{\mu}{\nu}\frac{\lp\max\{1,e^{-\nu{c_0}T}\}\rp^2}{|{c_0}|} |w_0|^2,
    \end{align}
which yields the desired conclusion. 
\end{proof}

\newcommand{\etalchar}[1]{$^{#1}$}
\providecommand{\bysame}{\leavevmode\hbox to3em{\hrulefill}\thinspace}
\providecommand{\MR}{\relax\ifhmode\unskip\space\fi MR }
% \MRhref is called by the amsart/book/proc definition of \MR.
\providecommand{\MRhref}[2]{%
  \href{http://www.ams.org/mathscinet-getitem?mr=#1}{#2}
}
\providecommand{\href}[2]{#2}

%\vfill 

%\hfill

%\hfill

\begin{multicols}{2}
\noindent Elizabeth Carlson\\
{\scriptsize
Department of Computing \& Mathematical Sciences\\
California Institute of Technology\\
Department of Mathematics\\
Oregon State University\footnote{Present address}\\
Web:\url{https://sites.google.com/view/elizabethcarlsonmath}\\
Email: \url{carleliz@oregonstate.edu}\\
}

\noindent Aseel Farhat\\
{\scriptsize
Department of Mathematics \\
University of Virginia \\
Email: \url{af7py@virginia.edu}\\
}

\columnbreak

\noindent Vincent R. Martinez\\
{\scriptsize
Department of Mathematics \& Statistics\\
CUNY Hunter College \\
Department of Mathematics \\
CUNY Graduate Center \\
Department of Computing \& Mathematical Sciences\\
California Institute of Technology\footnote{Present address} \\
Web: \url{http://math.hunter.cuny.edu/vmartine/}\\
Email: \url{vrmartinez@hunter.cuny.edu}, \url{vrm@caltech.edu}\\
}

\noindent Collin Victor\\
{\scriptsize
Department of Mathematics \\
Texas A\&M University \\
Web: \url{https://collinvictor.me/}\\
Email: \url{collin.victor@tamu.edu}\\
}

\end{multicols}

\end{document}